\newtheorem{theorem}{Theorem}[section]
\newtheorem{lm}[theorem]{Lemma}
\newtheorem{tr}[theorem]{Theorem}
\newtheorem{cor}[theorem]{Corollary}
\newtheorem{rem}[theorem]{Remark}
\newtheorem{pr}[theorem]{Proposition}
\DeclareMathOperator{\GL}{GL}
\begin{document}
\title[Presentation of rational Schur algebras]{Presentation of rational Schur algebras}
\author{Franti\v sek Marko}
\address{The Pennsylvania State University, 76 University Drive, Hazleton, 18202 PA, USA}
\email{fxm13@psu.edu}
\begin{abstract}{We present rational Schur algebra $S(n,r,s)$ over an arbitrary ground field $K$ as a quotient of the distribution algebra $Dist(G)$ of the general linear group $G=\GL(n)$ by an ideal $I(n,r,s)$ and provide an explicit description of the generators of $I(n,r,s)$. 
Over fields $K$ of characteristic zero, this corrects and completes a presentation of $S(n,r,s)$ in terms of generators and relations originally considered by Dipper and Doty. The explicit presentation over ground fields of positive characteristics appears here for the first time.}
\end{abstract}
\maketitle

\section*{Introduction}
Denote by $G=\GL(n)$ the general linear group over a ground field $K$, 
$\mathfrak{gl}_n$ the corresponding general linear Lie algebra, $T$ a maximal torus of $G$, and 
$E=K^n$ the natural $G$-module of dimension $n$.

First, assume that the characteristic of the ground field $K$ is zero. 
Then the Schur algebra $S(n,d)$ is the image of the natural morphism 
\[\rho_d: U(\mathfrak{gl}_n)\to End_K(E^{\otimes d})\]
from the universal enveloping algebra $U(\mathfrak{gl}_n)$ of the general Lie algebra $\mathfrak{gl}_n$ to endomorphisms of the $d$th tensor product of $E$.

Denote by $\{\alpha_1, \ldots, \alpha_{n-1}\}$ a set of simple positive roots of $\mathfrak{gl}_n$, by $\Lambda$ the weight lattice of $\mathfrak{gl}_n$, by $\epsilon_1, \ldots, \epsilon_n$ the standard basis of $\Lambda$, and by $(.|.)$ the Killing form of $\mathfrak{gl}_n$. 
The algebra $U(\mathfrak{gl}_n)$ is given by generators $e_i, f_i$ for $i=1, \ldots, n-1$, and $H_i$ for $1\leq i\leq n$
subject to the following relations $(a)$ through $(e)$: 

$(a) H_iH_j=H_jH_i$.

$(b) e_if_j-f_je_i=\delta_{ij} (H_j-H_{j+1})$.

$(c) H_ie_j-e_jH_i=(\epsilon_i,\alpha_j)e_j, \quad H_if_j-f_jH_i=-(\epsilon_i,\alpha_j)f_j$.

$(d) e_i^2e_j-2e_ie_je_i+e_je_i^2=0$ if $|i-j|=1$; and $e_ie_j-e_je_i=0$ otherwise.

$(e) f_i^2f_j-2f_if_jf_i+f_jf_i^2=0$ if $|i-j|=1$; and $f_if_j-f_jf_i=0$ otherwise.

In the paper \cite{dg}, Doty and Giaquinto showed that the kernel of $\rho_d$ is generated by the ideal of 
$U(\mathfrak{gl}_n)$ spanned by elements
\begin{itemize}\item $H_1+\ldots+ H_n-d$ and \item $H_i(H_i-1)\ldots (H_i-d+1)(H_i-d)$ for $1\leq i\leq n$.
\end{itemize} 
As a consequence, they obtained a presentation of $S(n,d)$ by generators $e_i, f_i$ for $i=1, \ldots, n-1$, and $H_i$ for $1\leq i\leq n$ and above relations $(a)-(e)$ together with 

$(f) H_1+\ldots + H_n=d$.

$(g) H_i(H_i-1)\ldots (H_i-d+1)(H_i-d)=0$ for $1\leq i\leq n$.

The defining relations for generalized $q$-Schur algebras were determined in \cite{doty} while a different set of generators and relations was given in \cite{dgs}.

Now assume that the ground field $K$ has an arbitrary characteristic and denote by $Dist(G)$ the distribution algebra of $G$. Then there is a natural morphism 
\[\rho_d: Dist(G)\to End_K(E^{\otimes d})\]
such that its image coincides with the Schur algebra $S(n,d)$.

We recall the definition of the generalized Schur algebra given by Donkin in \cite{d}. We call a $Dist(G)$-module admissible if it is a direct sum of its weight spaces.
Let $\mu$ be a finite set of dominant weights of $G$ and $I(\mu)$ be the ideal of $Dist(G)$ consisting of all elements that annihilate all finite-dimensional admissible $Dist(G)$-modules
that have composition factors solely from the set of irreducible modules $L(\lambda)$ of the highest weight $\lambda$ for $\lambda\in \mu$.
The generalized Schur algebra $S(\mu)$ corresponding to the set $\mu$ is the factoralgebra of $Dist(G)$ by the ideal $I(\mu)$.

Dipper and Doty introduced rational Schur algebras $S(n,r,s)$ in \cite{dd}. By \cite{dd,d}, the rational Schur algebra $S(n,r,s)=S(\pi)$ is a generalized Schur algebra corresponding to a saturated set $\pi$ of dominant weights of $G$. 
The set $\pi=\Lambda^+(n,r,s)$ consists of dominant weights in the set of weights $\Lambda(n,r,s)$ appearing in the weight space decomposition of the mixed tensor product space 
$E^{r,s}_{K}=E_K^{\otimes r}\otimes {E^*_{K}}^{\otimes s}$, where $E$ is the natural $G$-module.

In their attempt to describe $S(n,r,s)$, Dipper and Doty defined in \cite[Section 7.3]{dd} an algebra $S'(n,r,s)$ by generators and relations. Donkin in \cite[Subsection 4.2]{d} showed that $S'(n,r,s)$ is not isomorphic to $S(n,r,s)$ in general but proved that $S'(n,r,s)=S(\pi')$ is a generalized Schur algebra for a saturated set $\pi'$. Here $\pi'$ consists of dominant weights 
of the set of weights $\lambda=(\lambda_1, \ldots, \lambda_n)$ of the Schur algebra $S(n,r+(n-1)s)$  satisfying 
$0\leq \lambda_1, \ldots, \lambda_n\leq r+s$.

A bijective correspondence exists between the weights in $\pi$ and a subset $\pi''\subset \pi'$ given in Lemma \ref{lm1.5}. The surjection 
\[S(n, r+(n-1)s)\twoheadrightarrow S(\pi') \twoheadrightarrow S(\pi'') \simeq S(\pi)=S(n,r,s)\]
allows us to deduce certain properties of $S(n,r,s)$ from the properties of $S(n,r+(n-1)s)$.
The mistake on page 74 of \cite{dd} was the claim that $\pi''=\pi'$. Since $\pi''$ is a proper subset of $\pi'$ in general, the algebra $S'(n,r,s)$ is not isomorphic to $S(n,r,s)$.

Our main results give presentations of ordinary and rational Schur algebras in terms of generators and relations over fields of arbitrary characteristic.  
The most important tool we are using is the result of Donkin proved in Proposition 1 of subsection 3.3 of \cite{d} stating that the defining ideal $I(\mu)$ of the generalized Schur algebra $S(\mu)$ is generated by its intersection with $Dist(T)$.

We describe $I(\mu)\cap Dist(T)$ in three separate instances. Define 
\[S_0(\mu)=Dist(T)/I(\mu)\cap Dist(T)\]
and  consider a specific subideal $I_0(\mu)$ of $I(\mu)\cap Dist(T)$ given by generators. We show that the quotient $F_0(\mu)=Dist(T)/I_0(\mu)$ is semisimple, and its cardinality does not exceed $|\mu|$. Since there is a surjective morphism $F_0(\mu)\twoheadrightarrow S_0(\mu)$ and the cardinality of $S_0(\mu)$ equals $|\mu|$, we conclude that $F_0(\mu)\simeq S_0(\mu)$ and $I_0(\mu)=I(\mu)\cap Dist(T)$.

The outline of the paper is as follows.
Section 1 introduces the concepts of rational Schur algebras and generalized Schur algebras and describes a bijective correspondence between certain sets of dominant weights.

In Section \ref{2}, we give an explicit presentation of rational Schur algebra $S(n,r,s)$ over fields of characteristic zero and complete the work started in \cite{dd}. 
We also discuss cellular bases for $S(n,r)$ and correct the statement of Theorem 6.5 and Corollary 6.6 of \cite{dd}.

In Sections 3 through 6, we work over a ground field of a positive characteristic.

Section 3 is devoted to preparatory material about binomial expressions.

In Section 4, we determine the distribution algebra of the Frobenius kernels of $G$ by generators and relations.

In Section 5, we determine the generators of the kernel $I(n,r)$ of $\rho_d$
explicitly and obtain the description of $S(n,r)$ as a factoralgebra of $Dist(G)$ by the ideal $I(n,r)$. 

In Section 6, we give a presentation for rational Schur algebras $S(n,r,s)$.

\section{Rational Schur algebras $S(n,r,s)$}

Throughout the paper, $G=\GL(n)$ will denote the general linear group over an algebraically closed ground field $K$
of characteristic zero or $p>0$. 

\subsection{General linear group}
Let $C=\begin{pmatrix}c_{ij}\end{pmatrix}$ be a generic matrix of size $n\times n$, and $D=det(C)$. 
Denote by $A(n)$ the commutative bialgebra generated by the entries $c_{ij}$ of $C$. The comultiplication 
$\Delta:A(n)\to A(n)\otimes A(n)$, given by $c_{ij}\mapsto \sum_{k=1}^n c_{ik}\otimes c_{kj}$ and the counit
$\epsilon:A(n) \to K$, given by $c_{ij}\mapsto \delta_{ij}$ are algebra morphisms. 
The localization of $A(n)$ by the determinant $D$ is the coordinate algebra $K[G]$ of the general linear group $G$. 
It is the functor from commutative algebras to groups such that $G(A)=Hom_K(K[G], A)$.

\subsection{Distribution algebras}

For the definition of the distribution algebra, see \cite{j}. Some papers, in particular, \cite{d}, use hyperalgebra terminology, which can be identified with the distribution algebra. 

Let $T$ be a maximal torus of $G$, corresponding to diagonal matrices. Fix the standard decomposition of the roots $\Phi=\Phi^+\cup\Phi^-$ of $G$ into positive and negative roots, where $\Phi^+=\{\epsilon_i-\epsilon_j|1\leq i<j\leq n\}$. Let $U^+$ and $U^-$ be the corresponding maximal unipotent subgroups of $G$. The triangular decomposition $G=U^+TU^-$ gives the decomposition $Dist(G)=Dist(U^+)Dist(T)Dist(U^-)$.

If the characteristic of $K$ is zero, then $Dist(G)$ is isomorphic to the universal enveloping algebra $U(\mathfrak{g})$ of the general linear Lie algebra $\mathfrak{g}=\mathfrak{gl}_n$.

\subsection{Schur algebras}

The bialgebra $A(n)$ has a natural grading by degrees, and we denote the component of degree $r$ by $A(n,r)$.
The Schur algebra $S(n,r)$ is the finite-dimensional dual $A(n,r)^* = Hom_K(A(n,r),K)$. The modules over the algebra $S(n,r)$ correspond to polynomial modules over $G$ of degree $r$. Every finite-dimensional (rational) $G$-module is obtained from a polynomial module by tensoring with a suitable power of the one-dimensional representation given by the determinant $D$.

Let $E=K^n$ be the natural $G$-module regarded as the set of column vectors. 
As mentioned in the introduction, there is a natural morphism 
\[\rho_d: Dist(G)\to End_K(E^{\otimes d})\]
given by restriction such that its image coincides with the Schur algebra $S(n,d)$.

We denote the image of $Dist(T)$ under $\rho_d$ by $\rho_d(Dist(T))=S_0(n,d)$.

\subsection{Rational Schur algebras}

Irreducible rational $G$-modules correspond to pairs of partitions $(\alpha, \beta)$ such that the sum $l(\alpha)+l(\beta)$ of their lengths does not exceed $n$ - see Proposition 2.1 of \cite{stem}.

Denote the entries of the inverse matrix $C^{-1}$ by $d_{ij}$. It was shown in \cite{dd} that the algebra $K[G]$ coincides with the algebra generated by all $c_{ij}$ and $d_{ij}$.

Denote by $A(n,r,s)$ the subspace of $K[G]$ spanned by all products \[\prod_{i,j} c_{ij}^{a_{ij}} \prod d_{ij}^{b_{ij}},\] where each $a_{ij}, b_{ij}\geq 0$, 
$\sum_{i,j} a_{ij}=r$, and $\sum_{i,j} b_{ij}=s$. Then $A(n,r,s)$ is a subcoalgebra of $K[G]$, and $K[G]=\sum_{r,s} A(n,r,s)$. The last sum is not direct, and we have 
$A(n,r,s)\subset A(n;r+1,s+1)$ for each $r,s$.

The rational Schur algebra $S(n,r,s)$ is the dual algebra \[A(n,r,s)^*=Hom_K(A(n,r,s),K).\]
The rational Schur algebras were defined first in \cite{dd}.
The ordinary Schur algebra $S(n,r)$ is a special case when $s=0$.

Denote by $E_K^{r,s}=E_K^{\otimes r} \otimes {E_K^*}^{\otimes s}$ the mixed tensor space.  
There is a natural morphism 
\[\rho_{r,s}: Dist(G)\to End(E^{\otimes r}\otimes (E^*)^{\otimes s})\]
given by restriction, such that its image coincides with the Schur algebra $S(n,r,s)$ - see $(3.5.4)$ of \cite{dd}.

The map $\rho_{r,s}$ can be realized by assigning to each matrix unit $e_{ij} \in \mathfrak{gl}_n$ the endomorphism $_{ji}D: E_K^{r,s}\to E_K^{r,s}$ induced by the right superderivation $_{ji}D$, which satisfies $(c_{kl})_{ji}D=\delta_{lj}c_{ki}$. 
Additionally, the divided power $e_{ij}^{(k)}$, where $i\neq j$ is assigned to $_{ji}D^{(k)}$,  and the binomial 
$\binom{e_{ii}}{k}$ is assigned to $\binom{_{ii}D}{k}$ - see the end of Section 4 of \cite{lsz}. 

We denote the image of $Dist(T)$ under $\rho_{r,s}$ by $\rho_{r,s}(Dist(T))=S_0(n,r,s)$.

\subsection{Generalized Schur algebras}

We follow the notation of \cite{d}. We write 
$X(n)$ for the set of weights of $G$, $X^+(n)$ for the set of dominant weights in $X(n)$, 
$\Lambda(n,d)=\{\lambda\in X(n)||\lambda|=\sum \lambda_i =d\}$ and $\Lambda^+(n,d)=\Lambda(n,d)\cap X^+(n)$.

Good sources of information about generalized Schur algebras are papers \cite{d2,d3} of Donkin or Appendix A of \cite{j}.
For a subset $\mu$ of $X^+(n)$, we denote by $\mathcal{C}(\mu)$ the category of all $G$-modules with irreducible composition factors $L_G(\lambda)$ for $\lambda\in \mu$. For a $G$-module $M$, we denote by $A(\mu)=O_{\mu}(M)$ the sum of all its submodules that belong to $\mathcal{C}(\mu)$.

We consider $K[G]$ as the $G$-module via the right regular representation $\rho_r$. Then Lemma A.13 of \cite{j} states that $M$ belongs to $\mathcal{C}(\mu)$ if and only if the coefficient space $cf(M)$ is a subset of $O_{\mu}(K[G])$.

The following statement is Proposition A.16 of \cite{j}; it is fundamental for our considerations.
\begin{pr}\label{pr1.1}
Let $\mu\subset X^+(n)$ be finite. Then there is a surjective morphism of associative $K$-algebras
from $Dist(G)$ to $S(\mu)=O_{\mu}(K[G])^*$ given by the restriction. The kernel of this morphism is the two-sided ideal $I(\mu)$ consisting of those elements of $Dist(G)$ that annihilate all $G$-modules in $\mathcal{C}(\mu)$.
\end{pr}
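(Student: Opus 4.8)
The plan is to realize the distribution algebra inside the finite dual of $K[G]$ and to analyze the restriction map by coalgebra duality. First I would recall that $Dist(G)$ sits naturally inside $K[G]^*$ as the union $\bigcup_{m\ge 0}(K[G]/I^{m+1})^*$, where $I=\ker\epsilon$ is the augmentation ideal, i.e. the vanishing ideal of the identity $e\in G$; concretely $Dist(G)$ consists of those linear functionals on $K[G]$ that annihilate some power of $I$. Here $K[G]^*$ carries the convolution product dual to the comultiplication of $K[G]$, and $Dist(G)$ is a subalgebra for it. Since $A(\mu)=O_{\mu}(K[G])$ is a finite-dimensional subcoalgebra of $K[G]$ for finite $\mu$, its dual $S(\mu)=A(\mu)^*$ is a finite-dimensional associative algebra, and the transpose of the inclusion $A(\mu)\hookrightarrow K[G]$ is an algebra homomorphism $K[G]^*\to A(\mu)^*$, namely restriction of functionals. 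Composing with $Dist(G)\hookrightarrow K[G]^*$ yields the asserted restriction morphism $\pi\colon Dist(G)\to S(\mu)$.

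To prove surjectivity I would argue by nondegeneracy. Because $A(\mu)$ is finite-dimensional, $\pi$ is onto precisely when the only $c\in A(\mu)$ annihilated by every element of $Dist(G)$ is $c=0$. For a fixed $m$, the functionals in $(K[G]/I^{m+1})^*$ annihilate $c$ exactly when the image of $c$ in $K[G]/I^{m+1}$ vanishes, i.e. when $c\in I^{m+1}$; hence $c$ is killed by all of $Dist(G)$ if and only if $c\in\bigcap_{m\ge 0}I^{m+1}$. Now $K[G]$ is a Noetherian integral domain and $I$ is a proper ideal, so Krull's intersection theorem gives $\bigcap_{m\ge 1}I^{m}=0$. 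Thus no nonzero element of $A(\mu)$ is annihilated by $Dist(G)$, and $\pi$ is surjective. I would emphasize that this step is characteristic-free: it does not use $\ch K=0$, and it replaces any delicate Taylor-expansion argument (which would fail in characteristic $p$) by the clean statement that distributions separate the points of $K[G]$.

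Finally, to identify the kernel with $I(\mu)$, I would use how $Dist(G)\subseteq K[G]^*$ acts on a rational module. If $(M,\rho_M)$ is a $K[G]$-comodule with coefficient space $cf(M)$, then $\delta\in Dist(G)$ acts by $\delta\cdot m=(\mathrm{id}\otimes\delta)\rho_M(m)$, and a short computation (expanding in a basis of $M$, or applying the counit) shows that $\delta$ annihilates $M$ if and only if $\delta(cf(M))=0$. By Lemma A.13 of \cite{j}, the modules $M\in\mathcal{C}(\mu)$ are exactly those with $cf(M)\subseteq O_{\mu}(K[G])=A(\mu)$. Hence if $\delta|_{A(\mu)}=0$ then $\delta$ kills every $M\in\mathcal{C}(\mu)$, giving $\ker\pi\subseteq I(\mu)$. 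Conversely, $A(\mu)$ is itself a module in $\mathcal{C}(\mu)$ whose coefficient space equals $A(\mu)$, so any $\delta\in I(\mu)$ satisfies $\delta(A(\mu))=0$, i.e. $\delta\in\ker\pi$, whence $I(\mu)\subseteq\ker\pi$. Combining the two inclusions gives $\ker\pi=I(\mu)$ and therefore $Dist(G)/I(\mu)\cong S(\mu)$.

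The step I expect to carry the genuine content is the surjectivity, that is, the separation of $K[G]$ by distributions via Krull's theorem; the algebra-morphism property and the kernel computation are then formal consequences of coalgebra duality, once Lemma A.13 and the finite-dimensionality of $O_{\mu}(K[G])$ are in hand.
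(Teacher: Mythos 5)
The paper does not actually prove this proposition: it is quoted as Proposition A.16 of Jantzen's book \cite{j}, so the only ``proof'' in the paper is that citation. Your argument is correct, and it is in substance the standard argument underlying that reference: embed $Dist(G)$ in the finite dual of $K[G]$, obtain the algebra map to $A(\mu)^*=S(\mu)$ by dualizing the inclusion of the subcoalgebra $A(\mu)=O_{\mu}(K[G])$, prove surjectivity from the fact that distributions separate the points of $K[G]$ (Krull's intersection theorem applied to the augmentation ideal of the Noetherian domain $K[G]$ --- this is exactly where connectedness of $\GL(n)$ enters, since for disconnected $G$ the coordinate ring is not a domain and the statement fails), and identify the kernel via the observation that $\delta$ kills a rational module $M$ if and only if $\delta(cf(M))=0$, combined with Lemma A.13 of \cite{j} and the fact that $A(\mu)$ itself lies in $\mathcal{C}(\mu)$ with $cf(A(\mu))=A(\mu)$.

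Two ingredients you assert rather than prove are themselves nontrivial: that $O_{\mu}(K[G])$ is finite dimensional for finite $\mu$, and that it is a subcoalgebra of $K[G]$ (equivalently, that it equals its own coefficient space; this is what makes restriction of functionals an algebra homomorphism and legitimizes your step $cf(A(\mu))=A(\mu)$). Both are established in the same appendix of \cite{j} (A.13--A.15), so invoking them is consistent with the context in which the proposition is used; but note that your surjectivity argument genuinely needs the finite dimensionality --- Krull's theorem alone only shows that the image of the restriction map has zero annihilator in $A(\mu)$, which for an infinite-dimensional coalgebra would give density rather than surjectivity.
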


An important description of $I(\mu)$ is given in Proposition 1 of subsection 3.3 of \cite{d}.
\begin{pr}\label{pr1.2}
The ideal $I(\mu)$ is generated by its intersection with $Dist(T)$.
\end{pr}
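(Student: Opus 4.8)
The plan is to establish the only nontrivial inclusion. Write $J$ for the two-sided ideal of $Dist(G)$ generated by $I(\mu)\cap Dist(T)$. Each generator of $J$ already annihilates every module in $\mathcal{C}(\mu)$, so $J\subseteq I(\mu)$ is immediate by Proposition \ref{pr1.1}, and the entire content lies in proving $I(\mu)\subseteq J$. First I would use the adjoint action of $T$, which grades $Dist(G)=\bigoplus_{\gamma\in X(n)}Dist(G)_\gamma$ with $Dist(T)=Dist(G)_0$ and $Dist(G)_\gamma\cdot M_\nu\subseteq M_{\gamma+\nu}$ for every admissible module $M$. Testing an element $u=\sum_\gamma u_\gamma$ against a single weight vector $v\in M_\nu$ shows $u_\gamma v=0$ for each $\gamma$ separately, since the $u_\gamma v$ lie in distinct weight spaces; as the modules of $\mathcal{C}(\mu)$ are admissible, this proves that $I(\mu)$ is homogeneous for the grading. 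The ideal $J$ is homogeneous as well, being generated inside $Dist(G)_0$. Hence it suffices to show $u\in J$ for a weight-homogeneous $u\in I(\mu)\cap Dist(G)_\gamma$.

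Next I would put such a $u$ into normal form through the triangular decomposition $Dist(G)=Dist(U^+)\,Dist(T)\,Dist(U^-)$ together with the divided-power bases of $Dist(U^{\pm})$, writing $u=\sum_i X_iH_iY_i$ with $X_i\in Dist(U^+)$, $Y_i\in Dist(U^-)$ weight-homogeneous and $H_i\in Dist(T)$. The decisive quantitative input is the finiteness of $\mu$: the set $\Lambda$ of weights occurring in modules of $\mathcal{C}(\mu)$ is finite, so $S_0(\mu)=Dist(T)/(I(\mu)\cap Dist(T))$ is finite-dimensional and supported on $\Lambda$. I would then show that this finiteness propagates to the unipotent directions modulo $J$: any divided power of a root vector that shifts weight outside the bounded range $\Lambda$ must, after the relevant $Dist(T)$-factors are slid past it by the commutation relations, land in $J$. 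This collapses $Dist(G)/J$ to the span of finitely many bounded monomials $X_iH_iY_i$, and it remains to check that their images are linearly independent in $S(\mu)$. Since $\mathcal{C}(\mu)$ is precisely the category of finite-dimensional $S(\mu)$-modules, one may test this on the regular module, reducing the independence to an explicit computation with the triangular basis. The surjection $Dist(G)/J\twoheadrightarrow S(\mu)$ of Proposition \ref{pr1.1} then forces an isomorphism, giving $I(\mu)=J$.

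I expect the genuine obstacle to be exactly this propagation-and-independence step, namely converting the finiteness of the torus quotient $S_0(\mu)$ into faithful control of the full algebra: one must argue that no relation hidden among the raising and lowering operators survives beyond those already imposed on $Dist(T)$. A cleaner alternative is to dualize. Under the perfect pairing between $Dist(G)$ and $K[G]$ one has $I(\mu)=O_\mu(K[G])^{\perp}$, so it suffices to prove the reverse containment $J^{\perp}\subseteq O_\mu(K[G])$; here $J^{\perp}$ consists of functions annihilated by $I(\mu)\cap Dist(T)$ under all left and right translates, and the problem becomes showing that the regular actions of $Dist(U^{\pm})$ carry such a function into the coefficient space of its restriction to $T$. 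This trades the monomial bookkeeping for a statement about coefficient spaces, but the essential difficulty, relating the module category $\mathcal{C}(\mu)$ to its torus shadow, is unchanged.
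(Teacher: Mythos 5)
The paper does not prove Proposition \ref{pr1.2} at all: it is quoted from Proposition 1 of Subsection 3.3 of \cite{d}, so your attempt has to stand entirely on its own. Its opening reduction does stand: the inclusion $J\subseteq I(\mu)$, the homogeneity of $I(\mu)$ for the adjoint $T$-grading (testing the components of $u$ on a single weight vector, whose images lie in distinct weight spaces), and the homogeneity of $J$ are all correct. But from that point on you offer a plan, not a proof. The two claims that carry all the weight --- that divided powers shifting weights outside the finite set of weights occurring in $\mathcal{C}(\mu)$ fall into $J$ once the torus factors are slid across, and that the images of the finitely many surviving monomials are linearly independent in $S(\mu)$ --- are never established; you yourself flag them as ``the genuine obstacle,'' and the dual formulation you sketch at the end carries, by your own admission, the same unresolved difficulty. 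A proof outline whose central step is acknowledged to be open is not a proof.

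Moreover, the gap cannot be closed along the lines you describe, because your outline uses no property of $\mu$ beyond finiteness, and with only that hypothesis the statement is false. Take $G=\GL(2)$ over $K$ of characteristic zero and $\mu=\{(2,0)\}$. Then $\mathcal{C}(\mu)$ consists of direct sums of copies of $L(2,0)=\Sym^2(E)$, whose weights are $(2,0),(1,1),(0,2)$, so $I(\mu)\cap Dist(T)$ is the set of polynomials in $H_1,H_2$ vanishing at these three points. Every such polynomial vanishes at $(1,1)$, hence the ideal $J$ generated by $I(\mu)\cap Dist(T)$ annihilates the determinant representation $L(1,1)$. But $I(\mu)$ does not annihilate $L(1,1)$: otherwise $L(1,1)$ would be a module over $S(\mu)=Dist(G)/I(\mu)\simeq End_K(L(2,0))\simeq M_3(K)$, whose unique simple module is three-dimensional. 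Hence $J\subsetneq I(\mu)$. What makes the proposition true where the paper uses it is that $\Lambda^+(n,r,s)$ and $\Lambda^+(n,d)$ are \emph{saturated}, and saturation is exactly the ingredient your argument never invokes. The argument that works (in substance, the one behind the cited result of Donkin) runs as follows: $Dist(T)/(I(\mu)\cap Dist(T))$ is spanned by orthogonal idempotents $e_\nu$ indexed by the finitely many weights $\nu$ occurring in modules of $\mathcal{C}(\mu)$; in $Dist(G)/J$ one has $1=\sum_\nu e_\nu$ and $e_{\nu+\gamma}u\equiv ue_\nu \pmod{J}$ for $u$ of adjoint weight $\gamma$ (reading $e_\kappa=0$ when $\kappa$ lies outside the weight set), so the triangular decomposition shows that $Dist(G)/J$ is a finite-dimensional admissible $Dist(G)$-module all of whose weights lie in that finite set; any highest weight of a composition factor of $Dist(G)/J$ is therefore dominant and bounded above by some element of $\mu$, hence lies in $\mu$ by saturation; thus $Dist(G)/J\in\mathcal{C}(\mu)$, so $I(\mu)$ annihilates it, which says exactly $I(\mu)\subseteq J$. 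Note that this module-theoretic step replaces your unproved linear-independence claim, and in the example above it is precisely that independence claim which fails: $Dist(G)/J$ is strictly larger than $S(\mu)$, so your spanning monomials could not map injectively.
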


\subsection{Mixed tensor space and the set of weights $\Lambda(n,r,s)$}

By Lemma 4.2 of \cite{dd}, the set of weights 
of $E_K^{r,s}$ is $\Lambda(n,r,s)$ consisting of $\lambda\in \mathbb{Z}^n$ such that 
\[\sum\{\lambda_i|\lambda_i>0\} = r-t \text{ and } \sum\{\lambda_i|\lambda_i<0\}=t-s \text{ for some }0\leq t\leq \min\{r,s\}.\]

For a weight $\lambda$ denote $P^+_{\lambda}=\{1\leq i\leq n| \lambda_i>0\}$ and 
$P^-_{\lambda}=\{1\leq i\leq n| \lambda_i<0\}$.

\begin{lm}\label{lm1.3}
The set $\Lambda(n,r,s)$ is characterized by the condition $(a)$ and any one of the conditions $(b)$, $(b'),(b''),(b''')$ or $(b'''')$ below.

$(a)$ $\sum_{i=1}^n\lambda_i = r-s$ 

$(b) -s\leq \sum_{i\in S } \lambda_i\leq r$ for every proper subset $S \subset \{1, \ldots, n\}$

$(b')  \sum_{i\in S } \lambda_i\leq r$ for every proper subset $S \subset \{1, \ldots, n\}$

$(b'') -s\leq \sum_{i\in S } \lambda_i$ for every proper subset $S \subset \{1, \ldots, n\}$

$(b''') \sum_{i\in P^+_{\lambda}} \lambda_i \leq r$

$(b'''') -s \leq \sum_{i\in P^-_{\lambda}} \lambda_i$
\end{lm}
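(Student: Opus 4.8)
The plan is to introduce the two extremal partial sums $P_\lambda = \sum_{i\in P^+_\lambda}\lambda_i \ge 0$ and $N_\lambda = \sum_{i\in P^-_\lambda}\lambda_i \le 0$, and to record at the outset the single fact on which everything rests: for every subset $S\subseteq\{1,\dots,n\}$ one has $N_\lambda \le \sum_{i\in S}\lambda_i \le P_\lambda$, with the upper bound attained at $S=P^+_\lambda$ and the lower bound at $S=P^-_\lambda$. These are just the observations that discarding the non-positive (resp.\ non-negative) entries can only increase (resp.\ decrease) a partial sum. Note also $\sum_{i=1}^n\lambda_i = P_\lambda+N_\lambda$, since the zero entries contribute nothing.

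First I would unwind the defining condition. Writing $P_\lambda=r-t$ and $N_\lambda=t-s$ forces $t=r-P_\lambda=N_\lambda+s$ (automatically an integer, as $\lambda\in\mathbb{Z}^n$), so the existence of such a $t$ is equivalent to the consistency relation $P_\lambda+N_\lambda=r-s$, which is exactly $(a)$. Granting $(a)$, the range constraint $0\le t\le\min\{r,s\}$ simplifies dramatically: $t\le r$ reads $P_\lambda\ge 0$ and $t\le s$ reads $N_\lambda\le 0$, both automatic, while $t\ge 0$ reads $P_\lambda\le r$, which is $(b''')$, equivalently $N_\lambda\ge -s$, which is $(b'''')$ (the equivalence of these two being immediate from $(a)$). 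This already proves that $\Lambda(n,r,s)$ is cut out by $(a)$ together with either $(b''')$ or $(b'''')$.

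It remains to show, assuming $(a)$, that $(b')$, $(b'')$, $(b''')$, $(b'''')$ are mutually equivalent; since $(b)$ is literally the conjunction of $(b')$ and $(b'')$, this simultaneously handles $(b)$. The implications $(b''')\Rightarrow(b')$ and $(b'''')\Rightarrow(b'')$ are immediate from the chains $\sum_{i\in S}\lambda_i\le P_\lambda\le r$ and $\sum_{i\in S}\lambda_i\ge N_\lambda\ge -s$. For the converses I would evaluate $(b')$ at $S=P^+_\lambda$ and $(b'')$ at $S=P^-_\lambda$; alternatively one may pass to complements, using that under $(a)$ the identity $\sum_{i\in S}\lambda_i+\sum_{i\in S^c}\lambda_i=r-s$ converts the upper bound $r$ on $S$ into the lower bound $-s$ on $S^c$.

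The one point requiring care, and the only genuine obstacle, is the qualifier \emph{proper} in $(b')$ and $(b'')$: the maximizing set $P^+_\lambda$ is proper unless every $\lambda_i>0$, and symmetrically $P^-_\lambda$ is proper unless every $\lambda_i<0$. In the degenerate all-positive case one has $N_\lambda=0$, so $(a)$ gives $P_\lambda=r-s\le r$ (using $s\ge 0$), whence $(b''')$ holds automatically, and the maximum of $\sum_{i\in S}\lambda_i$ over \emph{proper} $S$ is strictly below $P_\lambda$ but still $\le r$, so $(b')$ holds as well; the all-negative case is symmetric, using $r\ge 0$. Thus the edge cases turn out to be consistent rather than obstructive, and dispatching them with these two short remarks closes the chain of equivalences and completes the proof.
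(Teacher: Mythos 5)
Your proof is correct, and its core computation coincides with the paper's: both arguments reduce membership in $\Lambda(n,r,s)$ to the extremal sums over $P^+_{\lambda}$ and $P^-_{\lambda}$, writing $\sum_{i\in P^+_{\lambda}}\lambda_i = r-t$ and checking that condition $(a)$ forces $\sum_{i\in P^-_{\lambda}}\lambda_i = t-s$ with $0\le t\le \min\{r,s\}$. The difference is one of economy versus self-containment. The paper quotes Lemma 4.4 of \cite{dd} for the equivalence of the definition with $(a)$ and $(b)$, declares the implications $(b)\Rightarrow(b')\Rightarrow(b''')$ and $(b)\Rightarrow(b'')\Rightarrow(b'''')$ trivial, and only writes out the closing implication $(a)+(b''')\Rightarrow\lambda\in\Lambda(n,r,s)$ and its mirror. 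You instead derive everything from the definition: your parametrization by $t=r-P_\lambda=N_\lambda+s$ gives $(a)+(b''')\Leftrightarrow\lambda\in\Lambda(n,r,s)$ directly, which subsumes the cited lemma. You also address a point that the paper's word ``trivial'' glosses over: the implication $(b')\Rightarrow(b''')$ proceeds by evaluating $(b')$ at $S=P^+_{\lambda}$, which is legitimate only when $P^+_{\lambda}$ is a proper subset; in the all-positive case one must instead invoke $(a)$ together with $s\ge 0$ (and symmetrically $r\ge 0$ when all $\lambda_i<0$). Your explicit handling of these edge cases makes the chain of equivalences airtight where the paper leaves a small gap for the reader to fill, so your argument is, if anything, the more complete of the two.
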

\begin{proof}
The fact that $\lambda\in \Lambda(n,r,s)$ is equivalent to conditions $(a)$ and $(b)$ is Lemma 4.4. of \cite{dd}.
The implications $(b)\implies (b')\implies (b''')$ and $(b)\implies (b'')\implies (b'''')$ are trivial.

Assume the conditions $(a)$ and $(b''')$ are satisfied.  Then $\sum_{i\in P^+_{\lambda}} \lambda_i =r-t$ for $t\geq 0$,  
and $-s+t=\sum_{i\in P^-_{\lambda}} \lambda_i$. Since $0\leq r-t$ and $-s+t\leq 0$, we conclude that $\lambda\in \Lambda(n,r,s)$.

Symmetrically, conditions $(a)$ and $(b'''')$ also imply $\lambda\in \Lambda(n,r,s)$.
\end{proof}

According to (4) of \cite{d}, the rational Schur algebra $S(n,r,s)$ coincides with the generalized Schur algebra $S(\pi)$ for 
$\pi=\Lambda^+(n,r,s)$. 

For $\lambda\in \Lambda^+(n,r,s)$, list $P^+_\lambda=\{1, \ldots, u_{\lambda}\}$ 
and $P^-_{\lambda}=\{l_{\lambda}, \ldots, n\}$.

The set $\Lambda^+(n,r,s)$ is characterized in the following lemma.

\begin{lm}\label{lm1.4}
Assume $\mu\in X^+(n)$. Then $\mu\in \Lambda^+(n,r,s)$ if and only if $\mu$ satisfies the condition $(A)$ and one of the conditions $(B), (B'),(B''),(B''')$, and $(B'''')$ below.

$(A) \sum_{i=1}^n \mu_i = r-s$

$(B) -s\leq \sum_{i=1}^k \mu_i \leq r$ for each $1\leq k \leq n$

$(B') \sum_{i=1}^k \mu_i \leq r$ for each $1\leq k <n$

$(B'') -s\leq \sum_{i=1}^k \mu_i $ for each $1< k \leq n$

$(B''') \sum_{i=1}^{u_{\mu}} \mu_i \leq r$

$(B'''') -s \leq \sum_{i=l_{\mu}}^n \mu_i$
\end{lm}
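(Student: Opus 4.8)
The plan is to deduce Lemma \ref{lm1.4} from Lemma \ref{lm1.3} by exploiting that $\mu$ is dominant. Since $\Lambda^+(n,r,s)=\Lambda(n,r,s)\cap X^+(n)$ and $\mu\in X^+(n)$ is assumed, membership $\mu\in\Lambda^+(n,r,s)$ is equivalent to $\mu\in\Lambda(n,r,s)$, which by Lemma \ref{lm1.3} amounts to condition $(a)$ together with any one of $(b),(b'),(b''),(b'''),(b'''')$. As $(A)$ is literally condition $(a)$, it remains to show that, for a dominant $\mu$ satisfying $(A)$, each of $(B),(B'),(B''),(B'''),(B'''')$ is equivalent to the correspondingly primed condition of Lemma \ref{lm1.3}.

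First I would record the single structural fact that drives everything: if $\mu_1\geq\cdots\geq\mu_n$ and $S\subset\{1,\dots,n\}$ has $k$ elements, then $\sum_{i=n-k+1}^n\mu_i\leq\sum_{i\in S}\mu_i\leq\sum_{i=1}^k\mu_i$, so among all $k$-element subsets the sum is maximized by the initial segment $\{1,\dots,k\}$ and minimized by the final segment $\{n-k+1,\dots,n\}$. In particular the upper bounds over all proper subsets in $(b),(b')$ are governed by the initial segments, which is exactly what $(B),(B')$ test. Because $\mu$ is dominant, $P^+_\mu=\{1,\dots,u_\mu\}$ and $P^-_\mu=\{l_\mu,\dots,n\}$ are themselves segments, so $\sum_{i\in P^+_\mu}\mu_i=\sum_{i=1}^{u_\mu}\mu_i$ and $\sum_{i\in P^-_\mu}\mu_i=\sum_{i=l_\mu}^n\mu_i$; hence $(b''')$ and $(B''')$, respectively $(b'''')$ and $(B'''')$, are the very same condition, and there is nothing to prove in those two cases.

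For the remaining equivalences I would combine the extremality above with complementation. Since $\mu$ satisfies $(A)$, for every $S$ one has $\sum_{i\in S}\mu_i+\sum_{i\notin S}\mu_i=r-s$, and $S$ is proper and nonempty exactly when its complement is; this turns a lower bound $-s\leq\sum_{i\in S}\mu_i$ into the upper bound $\sum_{i\notin S}\mu_i\leq r$ on the complementary set. Running this together with the extremality fact yields $(b)\Leftrightarrow(B)$, $(b')\Leftrightarrow(B')$, and $(b'')\Leftrightarrow(B'')$: an upper bound over all proper subsets is equivalent to the upper bound on every initial segment, while a lower bound over all proper subsets is equivalent, after complementing, to an upper bound on the complementary segments. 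The restricted ranges of $k$ are precisely the boundary terms that $(A)$ makes automatic: the initial segment of length $n$ gives $\sum_i\mu_i=r-s\leq r$ (using $s\geq 0$), which is why $(B')$ may omit $k=n$, and the full sum gives $-s\leq r-s$ (using $r\geq 0$), which is why $(B'')$ may omit the corresponding trivial term.

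The step I expect to be most delicate is exactly this bookkeeping of which extreme is binding and over what range of $k$. The upper-bound conditions are naturally tested on initial segments and the lower-bound conditions on final segments, and one must pass between them through $(A)$; keeping the endpoint cases $k=1$ and $k=n$ (equivalently the empty and full subsets) consistent with the hypotheses $r,s\geq 0$ is where an error would most easily creep in. Once the extremal property of segment sums and the complementation identity are in place, each equivalence reduces to a short monotonicity check, and Lemma \ref{lm1.3} then delivers the stated characterization of $\Lambda^+(n,r,s)$.
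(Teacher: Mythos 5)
Your overall strategy is the intended one: the paper's own ``proof'' is only the remark that the argument is analogous to that of Lemma \ref{lm1.3}, and reducing membership in $\Lambda^+(n,r,s)$ to Lemma \ref{lm1.3} via dominance, the extremality of segment sums, and complementation through $(A)$ is the right way to carry that out. Your treatment of $(B)$, $(B')$, $(B''')$, $(B'''')$ is correct: for dominant $\mu$ the sets $P^+_\mu$ and $P^-_\mu$ are the segments $\{1,\dots,u_\mu\}$ and $\{l_\mu,\dots,n\}$, so $(b''')$ and $(b'''')$ literally coincide with $(B''')$ and $(B'''')$, and initial-segment extremality together with $(A)$-complementation handles $(B)$ and $(B')$.

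However, the step ``$(b'')\Leftrightarrow(B'')$'' fails for $(B'')$ as it is printed. Complementation turns the lower bound $-s\leq\sum_{i\in S}\mu_i$ of $(b'')$ into the upper bound $\sum_{i\notin S}\mu_i\leq r$ on the complement; taking $S$ to be a final segment, the complement is an initial segment, so $(b'')$ is equivalent (under $(A)$ and dominance) to upper bounds on initial segments, i.e.\ to $(B')$ --- not to lower bounds on initial segments. Lower bounds on initial segments are the wrong extremal family: initial segments maximize subset sums, so bounding them from below says almost nothing. Concretely, take $n=2$, $r=s=1$, $\mu=(2,-2)$: it is dominant, satisfies $(A)$, and satisfies the printed $(B'')$ (the only case is $k=2$, giving $-1\leq 0$), yet $\mu\notin\Lambda^+(2,1,1)$ because the sum of its positive parts is $2>r$. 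So with the printed $(B'')$ both the lemma and your claimed equivalence are false; the condition must be read as $-s\leq\sum_{i=k}^n\mu_i$ for $1<k\leq n$ (final segments --- consistent with the range of $k$, with $(B'''')$ being a final-segment sum, and with your own remark that the omitted $k=1$ term is ``the full sum'', which is true only under this reading). Under the final-segment reading your complementation argument goes through verbatim; as written, though, your proof asserts an equivalence that is false and silently switches between the two readings of $(B'')$, and that is a genuine gap you needed to flag and resolve.
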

\begin{proof}
The proof is analogous to that of Lemma \ref{lm1.3} and left to the reader.
\end{proof}

\subsection{Connection between $\Lambda^+(n,r,s)$ and $\Lambda^+(n,r+(n-1)s)$}

Denote $R^+_{\lambda}=\{1\leq i\leq n| \lambda_i>s\}$.

Denote 
\[\pi''=\{\lambda\in \Lambda^+(n, r+(n-1)s)|\sum_{i\in R^+_{\lambda}} \lambda_i \leq r+|R^+_{\lambda}|s\}.
\]

Recall that the set 
\[\pi'=\{\lambda=(\lambda_1, \ldots, \lambda_n)\in \Lambda^+(n,r+(n-1)s) | 0\leq \lambda_1, \ldots, \lambda_n\leq r+s\}.\] Then $\pi'' \subseteq \pi'$ but $\pi''$ can be a proper subset of $\pi'$.

Now, we correct the statement from the bottom of p.74 of \cite{dd}.

\begin{lm}\label{lm1.5}
A bijective correspondence exists between weights in $\pi''$ and $\pi=\Lambda^+(n,r,s)$.
\end{lm}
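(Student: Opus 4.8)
The plan is to exhibit the bijection explicitly as the translation of weights by the constant vector $(s, s, \ldots, s)$. Precisely, I would define $\phi : \pi \to \pi''$ by $\phi(\lambda)_i = \lambda_i + s$ for each $i$, with candidate inverse $\psi(\mu)_i = \mu_i - s$. Since $\phi$ and $\psi$ are mutually inverse translations of $\mathbb{Z}^n$, once I check that each maps the appropriate source set into the appropriate target set the bijective correspondence follows at once. Translation by a constant vector visibly preserves the dominance condition $\lambda_1 \geq \cdots \geq \lambda_n$, so membership in $X^+(n)$ is never in question on either side.

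Next I would verify that $\phi$ lands in $\pi''$. If $\lambda \in \pi = \Lambda^+(n,r,s)$, then condition $(A)$ of Lemma \ref{lm1.4} gives $\sum_i \lambda_i = r-s$, hence $\sum_i \phi(\lambda)_i = (r-s) + ns = r + (n-1)s$; together with dominance this places $\phi(\lambda) \in \Lambda^+(n, r+(n-1)s)$. The crucial observation is that
\[
R^+_{\phi(\lambda)} = \{\, i : \lambda_i + s > s \,\} = \{\, i : \lambda_i > 0 \,\} = P^+_{\lambda}.
\]
Substituting into the defining inequality of $\pi''$ yields $\sum_{i\in P^+_\lambda}(\lambda_i + s) \leq r + |P^+_\lambda|\, s$, which simplifies exactly to $\sum_{i\in P^+_\lambda} \lambda_i \leq r$, i.e. condition $(B''')$ of Lemma \ref{lm1.4}. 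Since $\lambda \in \pi$ satisfies $(B''')$, we conclude $\phi(\lambda) \in \pi''$.

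For the reverse inclusion I would run the same computation backwards. Given $\mu \in \pi''$, set $\lambda = \psi(\mu)$; then $\sum_i \lambda_i = (r + (n-1)s) - ns = r-s$, which is condition $(A)$. Here $P^+_\lambda = \{\, i : \mu_i - s > 0 \,\} = \{\, i : \mu_i > s \,\} = R^+_\mu$, so the defining inequality $\sum_{i\in R^+_\mu} \mu_i \leq r + |R^+_\mu|\, s$ of $\pi''$ rearranges to $\sum_{i\in P^+_\lambda} \lambda_i \leq r$, again condition $(B''')$. By Lemma \ref{lm1.4}, $(A)$ together with $(B''')$ guarantees $\lambda \in \Lambda^+(n,r,s) = \pi$. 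Thus $\psi$ maps $\pi''$ into $\pi$, and the bijection is established.

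I do not anticipate a serious obstacle. The only genuine choice is to work with condition $(B''')$ rather than the more symmetric conditions $(B)$ or $(B')$, precisely because $(B''')$ is the one phrased through $P^+_\lambda$ and therefore matches the set $R^+_\mu$ occurring in the definition of $\pi''$ once the translation is applied. The single point worth stating with care is the identity $R^+_{\phi(\lambda)} = P^+_\lambda$ (equivalently $P^+_{\psi(\mu)} = R^+_\mu$); granting this, the rest is a one-line rearrangement of the two defining inequalities, with the empty-set case reducing harmlessly to $0 \leq r$.
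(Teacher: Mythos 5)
Your proof is correct and follows essentially the same route as the paper: both exhibit the bijection as translation by $s(1,\ldots,1)$, rest on the key identity $R^+_{\lambda} = P^+_{\lambda - s(1,\ldots,1)}$, and invoke Lemma \ref{lm1.4} via conditions $(A)$ and $(B''')$ to convert the defining inequality of $\pi''$ into membership in $\Lambda^+(n,r,s)$ and back. Your write-up is merely a more explicit version of the paper's argument, spelling out the dominance preservation and the empty-set case that the paper leaves implicit.
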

\begin{proof}
Denote $\nu=(1, \ldots, 1)$ and $\mu=\lambda-s\nu$. 

If $\lambda\in \pi''$, then $R^+_{\lambda}=P^+_{\mu}$ and $\sum_{i\in P^+_{\mu}}\mu_i\leq r$. Since 
$\mu_1+\ldots + \mu_n=r-s$, Lemma \ref{lm1.4} implies that $\mu\in \Lambda^+(n,r,s)$.

Conversely, if $\mu\in \Lambda^+(n,r,s)$, then $\lambda=\mu+s\nu\in \Lambda^+(n,r+(n-1)s)$ and $P^+_{\mu}=R^+_{\lambda}$. Additionally, by Lemma \ref{lm1.4}, we have $\sum_{i\in P^+_{\mu}}\mu_i \leq r$, which implies 
$\sum_{i\in R^+_{\lambda}}\lambda_i \leq r+|R^+_{\lambda}|s$, showing $\lambda\in \pi''$. 
\end{proof}

The bijective correspondence between weights in $\pi''$ and $\pi=\Lambda^+(n,r,s)$ is realized for simple modules of $S(\pi'')$ and $S(n,r,s)=S(\Lambda^+(n,r,s))$ by tensoring with the $s$-fold tensor product of the one-dimensional determinantal representation $D$.
Namely, $W$ is the simple module of the highest weight $\lambda$ in $S(\pi'')$ if and only if 
$W\otimes D^{\otimes -s}$ is the simple module of the highest weight $\mu$ in $S(n,r,s)=S(\Lambda^+(n,r,s))$.

To show that $S(\pi')\not\simeq S(\pi)=S(n,r,s)$ in general, Donkin in \cite{d} considers $n=4$, $r=s=1$ and the weight $\lambda=(2,2,0,0)\in \pi'$ in $S(4,4)$.
The corresponding weight $\mu=(1,1,-1,-1)$ does not belong to $S(4,1,1)$ because $\lambda=(2,2,0,0)\notin \pi''$. 
This follows from $|R^+_{\lambda}|=2$ and $\lambda_1+\lambda_2=4>3=r+|R^+_{\lambda}|s$.

\section{Presentation of $S(n,r,s)$ in the characteristic zero case}\label{2}

This section assumes that $char(K)=0$.

It follows from p.101 of \cite{d} that the space $A(\Lambda^+(n,r,s))$ is identical to the coefficient space of $E^{\otimes r} \otimes (E^*)^{\otimes s}$, which is 
$A(n,r,s)$.
It was shown in \cite{d} that the generalized Schur algebra $S(\Lambda^+(n,r,s))$ is isomorphic to the rational Schur algebra $S(n, r,s)$. 
We will switch back and forth between these corresponding objects in what follows.

The main idea behind the presentation of $S(n,r,s)$ is an adaptation and generalization of arguments from the proof of Proposition 4.2 of \cite{dg}.

The algebra $Dist(T)$ is the  polynomial algebra $K[H_1, \ldots, H_n]$ on generators $H_1, \ldots, H_n$ corresponding to $e_{11}, \ldots, e_{nn}$. 

Denote by 
$I_1=I_1(n,r,s)$ the ideal of $Dist(T)$ generated by the elements $\prod_{k=-r}^s (H_i+k)$ for each $i=1, \ldots n$.
Denote by $I_0=I_0(n,r,s)$ the ideal of $Dist(T)$ generated by 
\begin{itemize}
\item $I_1$, 
\item the element $\sum_{i=1}^n H_i-r+s$,
and 
\item
elements $\prod_{k=-r}^s (H_S+k)$ for each proper subset $S$ of $\{1, \ldots, n\}$, where $H_S=\sum_{i\in S} H_i$. 
\end{itemize} 

Further, denote \[F_1=F_1(n,r,s)=Dist(T)/I_1\]  and \[F_0=F_0(n,r,s)=Dist(T)/I_0.\]

Denote by $I(n,r,s)$ the ideal of $Dist(G)$ generated by $I_0(n,r,s)$.
Then $I_0(n,r,s)\subset I(n,r,s)\cap Dist(T)$.

\begin{lm}\label{lm2.1}
The images of $H_i$ of $Dist(T)$ under the map $\rho_{r,s}$ satisfy 
\[\sum_{i=1}^n \rho_{r,s}(H_i)=r-s\] together with the relations 
\[\prod_{k=-r}^s (\rho_{r,s}(H_S)+k)=0\] for each proper subset $S$ of $\{1, \ldots, n\}$, where $H_S=\sum_{i\in S} H_i$.
\end{lm}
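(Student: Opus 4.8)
The plan is to reduce both assertions to the concrete action of $\rho_{r,s}$ on the weight spaces of the mixed tensor space $E_K^{r,s}$. The element $H_i$ corresponds to the matrix unit $e_{ii}$, so under $\rho_{r,s}$ it acts on $E_K^{r,s}$ as the operator $_{ii}D$ recording the $i$th-coordinate weight. First I would recall from Lemma~\ref{lm1.3} that the weights of $E_K^{r,s}$ are exactly the $\lambda \in \Lambda(n,r,s)$, and that every such $\lambda$ satisfies $\sum_{i=1}^n \lambda_i = r-s$ by condition $(a)$. Because $\rho_{r,s}(H_i)$ acts on the $\lambda$-weight space as the scalar $\lambda_i$, the operator $\sum_{i=1}^n \rho_{r,s}(H_i)$ acts as the scalar $\sum_i \lambda_i = r-s$ on each weight space; since $E_K^{r,s}$ is the direct sum of its weight spaces, this gives $\sum_{i=1}^n \rho_{r,s}(H_i) = (r-s)\,\mathrm{id}$, which is the first claimed relation.

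For the polynomial relations, fix a proper subset $S \subset \{1,\ldots,n\}$ and observe that $\rho_{r,s}(H_S) = \sum_{i\in S}\rho_{r,s}(H_i)$ acts on the $\lambda$-weight space as the scalar $\sum_{i\in S}\lambda_i$. The product $\prod_{k=-r}^{s}(\rho_{r,s}(H_S)+k)$ therefore acts on that weight space as $\prod_{k=-r}^{s}\bigl(\sum_{i\in S}\lambda_i + k\bigr)$, and this scalar vanishes precisely when $\sum_{i\in S}\lambda_i \in \{-s,-s+1,\ldots,r\}$, i.e.\ when $-s \le \sum_{i\in S}\lambda_i \le r$. But that is exactly condition $(b)$ of Lemma~\ref{lm1.3}, which every $\lambda \in \Lambda(n,r,s)$ satisfies for every proper subset $S$. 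Hence the product annihilates each weight space of $E_K^{r,s}$, and since these weight spaces span the whole space, $\prod_{k=-r}^{s}(\rho_{r,s}(H_S)+k)=0$ as an endomorphism.

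The main point to handle carefully is the diagonalizability and commutativity that makes the scalar-by-scalar argument legitimate: one must verify that $\rho_{r,s}$ restricted to $Dist(T)$ lands in the commutative subalgebra $S_0(n,r,s)$ of operators that are simultaneously diagonal in the weight-space decomposition, so that evaluating a polynomial in the $\rho_{r,s}(H_i)$ amounts to evaluating it on the integer eigenvalues $\lambda_i$. This is where the explicit description of $\rho_{r,s}$ on the generators $e_{ii}$ via the superderivations $_{ii}D$ is needed, together with the fact established in the introduction of the subsection that $E_K^{r,s}$ decomposes as $\bigoplus_{\lambda\in\Lambda(n,r,s)} (E_K^{r,s})_\lambda$. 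Once that is in place, the expected obstacle is purely bookkeeping: matching the integer range $-r \le -k \le s$ of the product to the interval $[-s, r]$ furnished by condition $(b)$, which is the substitution $k \mapsto -k$. I would remark only that it is the validity of Lemma~\ref{lm1.3}$(b)$ for \emph{all} weights that is being exploited, so no single weight is lost.
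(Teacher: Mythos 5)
Your proposal is correct and follows essentially the same route as the paper: both use the weight-space decomposition of $E_K^{r,s}$ together with Lemma \ref{lm1.3} (conditions $(a)$ and $(b)$) to see that the operators $\rho_{r,s}(H_S)$ are diagonalizable with eigenvalues in $\{-s,\ldots,r\}$, forcing the product relations, and that $\sum_i\rho_{r,s}(H_i)$ acts as the scalar $r-s$. Your write-up merely makes explicit the scalar-by-scalar action on each weight space and the index substitution $k\mapsto -k$, which the paper leaves implicit.
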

\begin{proof}
The set $\Lambda(n,r,s)$ of weights of $E_K^{r,s}$ is characterized by Lemma \ref{lm1.3}. Therefore eigenvalues of all 
diagonalizable operators $\rho_{r,s}(H_S)$ belong to the set $\{-s, \ldots, r\}$, which implies
$\prod_{k=-r}^s (\rho_{r,s}(H_S)+k)=0$. Since every $\lambda\in \Lambda(n,r,s)$ satisfies $\sum_{i=1}^n \lambda_i=r-s$, the relation $\sum_{i=1}^n \rho_{r,s}(H_i)=r-s$ follows.
\end{proof}

The following proposition describes $S_0(n,r,s)$.

\begin{pr}\label{pr2.2}
The algebra $F_0(n,r,s)$ is semisimple and isomorphic to $S_0(n,r,s)$.
\end{pr}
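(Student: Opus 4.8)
The plan is to prove that $F_0(n,r,s)$ is semisimple with cardinality at most $|\pi|=|\Lambda^+(n,r,s)|$, then use the surjection $F_0(n,r,s)\twoheadrightarrow S_0(n,r,s)$ together with a matching lower bound on $\dim S_0(n,r,s)$ to force an isomorphism. Since $Dist(T)=K[H_1,\dots,H_n]$ is a commutative polynomial algebra and we are in characteristic zero, every quotient of it is a commutative algebra, so semisimplicity reduces to showing that the defining ideal $I_0$ is radical with finitely many maximal ideals above it — equivalently, that $F_0$ is a finite product of copies of $K$, one for each point of its (finite, reduced) maximal spectrum.

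First I would identify the maximal spectrum of $F_0$ concretely. A $K$-algebra homomorphism $Dist(T)\to K$ is evaluation at a point $\lambda=(\lambda_1,\dots,\lambda_n)\in K^n$ sending $H_i\mapsto\lambda_i$. Such a point factors through $F_0$ exactly when it kills all three families of generators of $I_0$: the relation $\sum_i H_i-r+s$ forces $\sum_i\lambda_i=r-s$, which is condition $(A)$ of Lemma \ref{lm1.4}; the generators $\prod_{k=-r}^{s}(H_i+k)$ force each $\lambda_i$ to lie in the integer range $\{-s,\dots,r\}$; and the generators $\prod_{k=-r}^{s}(H_S+k)$ force $\sum_{i\in S}\lambda_i\in\{-s,\dots,r\}$, i.e. $-s\le\sum_{i\in S}\lambda_i\le r$ for every proper subset $S$, which is condition $(B)$ of Lemma \ref{lm1.3}. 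Taken together these are precisely the defining conditions of $\Lambda(n,r,s)$ from Lemma \ref{lm1.3}, so the $K$-points of $F_0$ are in bijection with the integer weights $\lambda\in\Lambda(n,r,s)$. Since $\Lambda(n,r,s)$ is a finite set of integer points, $F_0$ has only finitely many maximal ideals.

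Next I would argue that $F_0$ is reduced, so that it is the coordinate ring of a finite reduced scheme and hence $F_0\cong\prod_{\lambda}K$ over the finite point set, giving both semisimplicity and $\dim_K F_0=|\Lambda(n,r,s)|$. Reducedness should follow from the fact that each single-variable generator $\prod_{k=-r}^{s}(H_i+k)$ is a squarefree polynomial (distinct integer roots in characteristic zero), so the quotient $F_1=Dist(T)/I_1$ is already the étale algebra $\bigl(K[H]/\prod_k(H+k)\bigr)^{\otimes n}\cong\prod_{(\lambda_1,\dots,\lambda_n)\in\{-s,\dots,r\}^n}K$; the remaining generators of $I_0$ cut out a subset of this finite reduced spectrum and adjoining idempotent-separating relations to a reduced finite product keeps it reduced. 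The cleanest route is to observe that $F_1$ is semisimple, hence so is its quotient $F_0$, and a finite-dimensional commutative semisimple $K$-algebra is a product of field extensions of $K$; because all the points are $K$-rational (integer coordinates), every factor is $K$ itself, whence $\dim_K F_0=|\Lambda(n,r,s)|$.

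It remains to match dimensions with $S_0(n,r,s)$. By Lemma \ref{lm2.1} the images $\rho_{r,s}(H_i)$ satisfy exactly the relations defining $I_0$, so $\rho_{r,s}$ induces a surjection $F_0(n,r,s)\twoheadrightarrow S_0(n,r,s)=\rho_{r,s}(Dist(T))$. On the other hand $S_0(n,r,s)$ is the image of the torus, acting diagonally on $E_K^{r,s}$ with the weight spaces indexed by $\Lambda(n,r,s)$; since the operators $\rho_{r,s}(H_S)$ are simultaneously diagonalizable with distinct weight-systems, the projections onto the distinct weight spaces are linearly independent elements of $S_0(n,r,s)$, so $\dim_K S_0(n,r,s)=|\Lambda(n,r,s)|=\dim_K F_0(n,r,s)$. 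A surjection of equal finite dimensions is an isomorphism, so $F_0(n,r,s)\cong S_0(n,r,s)$, and $S_0(n,r,s)$ inherits semisimplicity.

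The main obstacle I anticipate is not the dimension count but the bookkeeping that the three families of relations defining $I_0$ cut out \emph{exactly} $\Lambda(n,r,s)$ — that the subset and full-sum conditions are neither too weak nor too strong — and the careful verification that distinct weight spaces give genuinely independent idempotents (equivalently, that $I_0$ is radical rather than merely cutting out the right point set); once the reduced structure of $F_1$ is in hand, however, this is bookkeeping rather than a genuine difficulty.
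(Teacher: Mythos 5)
Your proof is correct and follows the same skeleton as the paper's: semisimplicity of $F_1$ via the Chinese remainder theorem (the generators $\prod_{k=-r}^{s}(H_i+k)$ have distinct integer roots in characteristic zero), semisimplicity of the quotient $F_0$, identification of the surviving points with $\Lambda(n,r,s)$ via Lemma \ref{lm1.3}, the surjection $F_0\twoheadrightarrow S_0(n,r,s)$ coming from Lemma \ref{lm2.1}, and a dimension match. Two steps differ in execution. First, you obtain the equality $\dim_K F_0=|\Lambda(n,r,s)|$ outright (a quotient of the split algebra $K^{(r+s+1)^n}$ is $K^{Z}$ with $Z$ the common zero locus of the ideal), while the paper settles for the inequality $\dim F_0\le|\Lambda(n,r,s)|$ by showing $h_{\lambda_1,\ldots,\lambda_n}\in I_0$ for $\lambda\notin\Lambda(n,r,s)$; the extra precision is harmless but also not needed, since the lower bound on $\dim S_0(n,r,s)$ is required in either case. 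Second, and more substantively, you compute $\dim_K S_0(n,r,s)=|\Lambda(n,r,s)|$ on the operator side, by Lagrange-interpolating the projections onto the weight spaces of $E_K^{r,s}$, whereas the paper computes it on the coalgebra side, as $\dim(A(n,r,s)\cap T)$ via the monomial basis $\prod_{i\in P^+_{\mu}}c_{ii}^{\mu_i}\prod_{i\in P^-_{\mu}}d_{ii}^{-\mu_i}$. Both arguments rest on the same nontrivial input, which you use only tacitly and should cite explicitly: every $\lambda\in\Lambda(n,r,s)$ actually occurs as a weight of $E_K^{r,s}$, i.e.\ all of those weight spaces are nonzero (Lemma 4.2 of \cite{dd}, as recalled before Lemma \ref{lm1.3}); without this, some of your projections could be zero and the lower bound would fail. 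Finally, you bypass the paper's intermediate chain through $R_0$, the image of $Dist(T)$ in $Dist(G)/I(n,r,s)$; that detour is not needed for this proposition, and since your direct surjection is exactly the composite of the paper's maps, the downstream argument in Proposition \ref{pr2.3} still goes through from your version.
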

\begin{proof}

By the Chinese remainder theorem, we obtain that \[F_1=\oplus_{-s\leq a_1, \ldots, a_n\leq r} K[H_1]/(H_1-a_1)\otimes \ldots \otimes K[H_n]/(H_n-a_n).\]

This isomorphism is realized by a map that sends a polynomial $p(H_1, \ldots, H_n)$ to $(p(a_1, \ldots, a_n))_{-s\leq a_1, \ldots, a_n\leq r}$.

Denote by $h_{a_1, \ldots, a_n}$ a polynomial in $H_1, \ldots, H_n$ such that $h_{a_1, \ldots, a_n}+I_1$ is the idempotent of $F_1$  corresponding to its direct summand 
\[K[H_1]/(H_1-a_1)\otimes \ldots \otimes K[H_n]/(H_n-a_n).\] 
Then for each $n$-tuple of integers $(b_1, \ldots, b_n)$ such that $-s\leq b_i\leq r$ for each $i=1, \ldots, n$ we have $h_{a_1, \ldots, a_n}(b_1, \ldots, b_n)=\prod_i\delta_{a_i,b_i} \pmod {I_1}$.
The elements $h_{a_1, \ldots, a_n}+I_1$ form a complete set of primitive orthogonal idempotents of $F_1$ that add up to the identity.
Then 
\[F_1=\oplus_{-s\leq a_1, \ldots, a_n\leq r} K(h_{a_1, \ldots, a_n}+I_1)\simeq K^{(r+s+1)^n}\] is a semisimple algebra.
Therefore, $F_0$ is a semisimple algebra as well.

If $\lambda \notin \Lambda(n,r,s)$ then by Lemma \ref{lm1.3} either $\lambda_1+\ldots +\lambda_n\neq r-s$
or there is a proper subset $S$ of $\{1, \ldots, n\}$ such that $\sum_{i\in S} \lambda_i$ is either smaller than $-s$ or
bigger than $r$. In this case, $h_{\lambda_1, \ldots, \lambda_n}\in I_0$.
Therefore, $F_0$ is a subalgebra of $\oplus _{\lambda\in \Lambda(n,r,s)} K(h_{\lambda_1, \ldots, \lambda_n}+I_0)$,
showing that $dim(F_0)\leq |\Lambda(n,r,s)|$, the cardinality of the set $\Lambda(n,r,s)$.

Denote $I=I(n,r,s)$ and $R=Dist(G)/I$. The canonical quotient map $Dist(T)\to R$ induces by restriction to $Dist(T)$  a map
$Dist(T)\to R$. The image of this map is denoted by $R_0$, and its kernel is $I\cap Dist(T)$. Therefore, $R_0 \simeq Dist(T)/(I\cap Dist(T))$.

By Lemma \ref{lm2.1}, there is a surjective map $R_0\to S_0(n,r,s)$ which is a part of the sequence of algebra surjections
\[F_0=Dist(T)/I_0 \to Dist(T)/(I\cap Dist(T)) \to R_0 \to S_0(n,r,s).\]
The first surjection is given by inclusion $I_0\subset I\cap Dist(T)$.

We have $S_0(n,r,s)=(A(n,r,s)\cap T)^*$ and $dim(S_0(n,r,s))$$=dim(A(n,r,s)\cap T)$.
The space $A(n,r,s)\cap T$ has a basis consisting of elements $\prod_{i\in P_{\mu}^+}c_{ii}^{\mu_i}\prod_{i\in P_{\mu}^-} d_{ii}^{-\mu_i}\in E_K^{r,s}$ such that $\sum_{i\in P_{\mu}^+}\mu_i = r-t$, 
$\sum_{i\in P_{\mu}^-}\mu_i=-s+t$ for some $0\leq t\leq \min\{r,s\}$. Therefore
$dim(S_0(n,r,s))=|\Lambda(n,r,s)|\leq dim(F_0)$. Since we have seen earlier that $dim(F_0)\leq |\Lambda(n,r,s)|$, we conclude that the above surjections are algebra isomorphisms.
\end{proof}

The presentation of $S(n,r,s)$ by generators and relations follows from the following statement.

\begin{pr}\label{pr2.3}
The rational Schur algebra $S(n,r,s)$ is a factoralgebra of $Dist(G)$ modulo the ideal $I(n,r,s)$.
\end{pr}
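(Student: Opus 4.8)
The plan is to establish the equality of two-sided ideals $I(n,r,s) = I(\pi)$, where $\pi = \Lambda^+(n,r,s)$ and $I(\pi)$ denotes the kernel of the surjection $\rho_{r,s}\colon Dist(G) \to S(n,r,s) = S(\pi)$ supplied by Proposition \ref{pr1.1}. Once this equality is established, the assertion $S(n,r,s) \cong Dist(G)/I(n,r,s)$ follows at once.

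First I would verify the inclusion $I(n,r,s) \subseteq I(\pi)$. Since $I(n,r,s)$ is by definition the two-sided ideal of $Dist(G)$ generated by the elements of $I_0(n,r,s)$, it suffices to check that each generator is annihilated by $\rho_{r,s}$. This is precisely the content of Lemma \ref{lm2.1}: the element $\sum_{i=1}^{n} H_i - r + s$ and the elements $\prod_{k=-r}^{s}(H_S + k)$ for proper subsets $S$ all map to zero in $S(n,r,s)$. As $I(\pi)$ is a two-sided ideal containing these generators, it contains the ideal $I(n,r,s)$ they generate, so there is an induced surjection $Dist(G)/I(n,r,s) \twoheadrightarrow S(n,r,s)$.

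For the reverse inclusion $I(\pi) \subseteq I(n,r,s)$ I would invoke Donkin's Proposition \ref{pr1.2}, which reduces the problem to the maximal torus: $I(\pi)$ is generated by $I(\pi) \cap Dist(T)$, so it is enough to prove $I(\pi) \cap Dist(T) \subseteq I(n,r,s)$. Here Proposition \ref{pr2.2} does the decisive work. Writing $I = I(n,r,s)$, the chain of natural maps displayed in its proof is shown to consist of isomorphisms; in particular, restriction identifies $R_0 = Dist(T)/(I \cap Dist(T))$ with $S_0(n,r,s) = Dist(T)/(I(\pi) \cap Dist(T))$, and along the way gives $I \cap Dist(T) = I_0$. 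Since $I \cap Dist(T) \subseteq I(\pi) \cap Dist(T)$ while the induced map between these two algebras of equal dimension $|\Lambda(n,r,s)|$ is an isomorphism, the two intersections coincide. Hence $I(\pi) \cap Dist(T) = I_0 \subseteq I(n,r,s)$, and Proposition \ref{pr1.2} yields $I(\pi) \subseteq I(n,r,s)$.

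Combining the two inclusions gives $I(\pi) = I(n,r,s)$, proving the proposition. The only substantive ingredient is the semisimplicity together with the equality $\dim F_0 = |\Lambda(n,r,s)| = \dim S_0(n,r,s)$ already established in Proposition \ref{pr2.2}; the conceptual crux of the present argument is the passage from this torus-level information to the full ideal afforded by Proposition \ref{pr1.2}. I expect the only point requiring care to be the verification that the isomorphisms of Proposition \ref{pr2.2} are genuinely induced by the restriction of $\rho_{r,s}$, so that the kernels of the torus-level maps coincide exactly with $I_0$ and with $I(\pi) \cap Dist(T)$.
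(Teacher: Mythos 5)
Your proof is correct and follows essentially the same route as the paper's: both arguments reduce to the torus via Proposition \ref{pr1.2} and then use the isomorphism $F_0(n,r,s)\simeq S_0(n,r,s)$ from Proposition \ref{pr2.2} (whose input is Lemma \ref{lm2.1}) to conclude $I_0(n,r,s)=I(\Lambda^+(n,r,s))\cap Dist(T)$, whence the ideals they generate in $Dist(G)$ coincide. The only difference is organizational, namely that you split the argument into two explicit inclusions of ideals, while the paper states the equality at the torus level directly and then invokes generation.
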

\begin{proof}
By 1.3. (4) of \cite{d}, $S(n,r,s)= S(\Lambda^+(n,r,s))=Dist(G)/(I(\Lambda^+(n,r,s))$.
By Proposition 1.2, the ideal $I(\Lambda^+(n,r,s))$ is generated by the intersection 
\[I(\Lambda^+(n,r,s))\cap Dist(T)\] denoted by $I_T(\Lambda^+(n,r,s)).$
Then $S_0(n,r,s)\simeq Dist(T)/I_T(\Lambda^+(n,r,s))$.
By Proposition \ref{pr2.2}, we obtain that 
\[S_0(n,r,s)\simeq Dist(T)/I_T(\Lambda^+(n,r,s))\simeq Dist(T)/I_0(n,r,s)\simeq F_0\]
is semisimple.

The surjection from $F_0$ to $S_0(n,r,s)$ constructed earlier implies $I_0(n,r,s)\subseteq I_T(\Lambda^+(n,r,s))$.
Therefore, $I_0(n,r,s)=I_T(\Lambda^+(n,r,s))$ and
$I(n,r,s)=I(\Lambda^+(n,r,s))$.
\end{proof}
\begin{cor}\label{cor2.4}
The rational Schur algebra $S(n,r,s)$ is isomorphic to the unital associated algebra given by generators
$e_i, f_i$ for $i=1, \ldots, n-1$, and $H_i$ for $1\leq i\leq n$
subject to the following relations $(a)$ through $(e)$: 

$(a) H_iH_j=H_jH_i$.

$(b) e_if_j-f_je_i=\delta_{ij} (H_j-H_{j+1})$.

$(c) H_ie_j-e_jH_i=(\epsilon_i,\alpha_j)e_j, \quad H_if_j-f_jH_i=-(\epsilon_i,\alpha_j)f_j$.

$(d) H_1+\ldots + H_n=r-s$

$(e) \prod_{k=-r}^s (H_S+k)=0$ for each proper subset $S$ of $\{1, \ldots, n\}$, where $H_S=\sum_{i\in S} H_i$.  
\end{cor}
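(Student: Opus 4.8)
The plan is to obtain the presentation by translating Proposition~\ref{pr2.3} into generators and relations. Since $\ch(K)=0$, the distribution algebra $Dist(G)$ is isomorphic to $U(\mathfrak{gl}_n)$, which by the discussion in the Introduction is presented by the generators $e_i,f_i$ for $1\le i\le n-1$ and $H_i$ for $1\le i\le n$ subject to the commutation relations $(a)$--$(c)$ together with the Serre relations. I would combine this with the elementary fact that, whenever an associative $K$-algebra $A$ is given by generators and relations $R$ and $J=(w_1,w_2,\dots)$ is the two-sided ideal generated by elements $w_k\in A$, the quotient $A/J$ is presented by the same generators subject to $R$ together with the relations $w_k=0$. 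Applying this with $A=Dist(G)\cong U(\mathfrak{gl}_n)$ and $J=I(n,r,s)$, Proposition~\ref{pr2.3} gives $S(n,r,s)\cong Dist(G)/I(n,r,s)$, so it remains only to exhibit a convenient generating set of the two-sided ideal $I(n,r,s)$ and impose each generator as a relation.

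By construction $I(n,r,s)$ is generated by $I_0(n,r,s)$, whose defining generators are the element $\sum_{i=1}^n H_i-r+s$, the generators $\prod_{k=-r}^{s}(H_i+k)$ of $I_1$, and the elements $\prod_{k=-r}^{s}(H_S+k)$ for every proper subset $S\subset\{1,\dots,n\}$. The simplification I would record is that the $I_1$ generators are redundant: for $n\ge 2$ each singleton $\{i\}$ is a proper subset, and $H_{\{i\}}=H_i$, so $\prod_{k=-r}^{s}(H_{\{i\}}+k)$ is already among the elements indexed by proper subsets. Hence $I_0(n,r,s)$ is generated by $\sum_{i=1}^n H_i-r+s$ and by $\prod_{k=-r}^{s}(H_S+k)$ ranging over all proper subsets $S$, which are precisely the left-hand sides of relations $(d)$ and $(e)$. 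Adjoining these as relations to the presentation of $U(\mathfrak{gl}_n)$ produces the stated presentation of $S(n,r,s)$.

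I do not anticipate a genuine obstacle, as the mathematical content is already carried by Proposition~\ref{pr2.3}; the corollary is essentially a bookkeeping reformulation. The two points I would be careful to state cleanly are the passage from $I_0(n,r,s)$, defined as an ideal of the commutative algebra $Dist(T)$, to the two-sided ideal of $Dist(G)$ that it generates, and the standard presentation fact recalled above. The only explicit verification worth spelling out is the redundancy of the $I_1$ generators via the singleton subsets, together with the remark that the generators of $U(\mathfrak{gl}_n)$ spanning $Dist(T)$ are exactly the $H_i$, so that the relations coming from $Dist(T)$ are precisely the polynomial relations $(d)$ and $(e)$ in the $H_i$.
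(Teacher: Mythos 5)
Your first step is sound and in fact reproduces the first half of the paper's own argument: by Proposition \ref{pr2.3} we have $S(n,r,s)\cong Dist(G)/I(n,r,s)$, in characteristic zero $Dist(G)$ may be identified with $U(\mathfrak{gl}_n)$, and adjoining the generators of the ideal $I(n,r,s)$ as relations to a presentation of $U(\mathfrak{gl}_n)$ presents the quotient; your remark that the generators of $I_1$ are redundant, being the relations $(e)$ for singleton subsets $S=\{i\}$, is also correct. The problem is your final sentence. What your argument produces is a presentation of $S(n,r,s)$ by the relations $(a)$ through $(e)$ \emph{together with the Serre relations} $e_i^2e_j-2e_ie_je_i+e_je_i^2=0$, $f_i^2f_j-2f_if_jf_i+f_jf_i^2=0$ (for $|i-j|=1$, and the commuting versions otherwise), since these are among the defining relations of $U(\mathfrak{gl}_n)$ that you carried along. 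Corollary \ref{cor2.4} asserts a presentation \emph{without} the Serre relations, and you dropped them silently. That step is not bookkeeping: removing relations from a presentation only gives a surjection from the algebra presented by the smaller set of relations onto the algebra presented by the larger set, so you still owe a proof that the Serre relations are consequences of $(a)$--$(e)$ in this setting.

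This is exactly the nontrivial content of the corollary and the point where the paper's proof does work that yours does not: after recording the presentation with Serre relations (quoting Section 6.2 of \cite{dgs}), the paper invokes Rouquier's observation that, using Corollary 4.3.2 of \cite{cg}, the Serre relations can be eliminated; the details are obtained by setting $q=1$ in Section 5 of \cite{dg2}. Without this elimination step (or a substitute argument), your proposal proves \emph{a} presentation of $S(n,r,s)$, but not the one stated in Corollary \ref{cor2.4}.
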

\begin{proof}
It was proved in Section 6.2 of \cite{dgs} that the defining relations are (a) through (e) together with the Serre relations 
\[e_i^2e_j-2e_ie_je_i+e_je_i^2=0 \text{ if } |i-j|=1; \text{ and } e_ie_j-e_je_i=0 \text{ otherwise},\] \[f_i^2f_j-2f_if_jf_i+f_jf_i^2=0 \text{ if } |i-j|=1; \text{ and } f_if_j-f_jf_i=0 \text{ otherwise. }\]
It was pointed out to me by Stephen Doty that R. Rouquier observed that using Corollary 4.3.2 of \cite{cg}, it is possible to eliminate the above Serre relations, which are one of the defining relations of $U(\mathfrak{gl}_n)$. For details, set $q=1$ in Section 5 of \cite{dg2}.
\end{proof}

\begin{rem}
We can replace condition $(e)$ with a weaker condition as follows:
$(e') \prod_{k=-r}^s (H_S+k)=0$ for each proper subset $S$ of $\{1, \ldots, n\}$ of cardinality smaller or equal to $\frac{n}{2}$.

Denote by $S'=\{1, \ldots, n\}\setminus S$. Using $(d)$, we express $H_{S'}=(r-s)-H_S$.
Then 
\[\prod_{k=-r}^s (H_{S'}+k)=\prod_{k=-r}^s (-H_S+r-s+k)=\prod_{l=-r}^s (-H_S-l)\]
shows that $\prod_{k=-r}^s (H_S+k)=0$ implies $\prod_{k=-r}^s (H_{S'}+k)=0$.

On p.120 of \cite{d},  Donkin proves that Theorem 7.4 of \cite{dd} gives the correct presentation for rational Schur algebra $S(n,r,s)$ for $n=2$ and $n=3$. 
This follows from Corollary \ref{cor2.4} and the above observation.
For $n\geq 4$, Theorem 7.4 of \cite{dd} is incorrect - one needs to add more relations. 
We have discussed Donkin's counterexample at the end of Section 1.
\end{rem}

\section{Cellular bases for $S(n,r,s)$}
In this section, $char(K)$ is arbitrary, and we follow Section 6 of \cite{dd}. It was mentioned in \cite{d} that there is a mistake in the reasoning on the last three lines on page 74 of \cite{dd}. 

We want to point out that this mistake makes Theorem 6.5 and Corollary 6.6 of \cite{dd}, describing the cellular basis of $S(n,r,s)$, also incorrect.  

Donkin also states, "argument for cellularity given there ... relies on the independence of the base field of the dimension of $S(n,r,s)$ and this is justified via the realization of $S(n,r,s)$ as a generalized Schur algebra."
Donkin proved in \cite{d} that $S(n,r,s)$ is a generalized Schur algebra in full generality required.

We now correct the statements of Theorem 6.5 and Corollary 6.6. of \cite{dd}.

\begin{tr}(compare to Theorem 6.5 of \cite{dd})
Consider the Schur algebra $S(n,r+(n-1)s)$ and the set $\nu=\Lambda^+(n,r+(n-1)s)$ of dominant weights ordered by the reverse dominance order $\unrhd$.

Let $\{C^{\lambda}_{S,T}|\lambda\in \nu\}$ be any cellular basis of $S(n,r+(n-1)s)$.
Then the kernel of the quotient map $S(n,r+(n-1)s)\to S(n,r,s)$ is spanned by the set of all $C^{\mu}_{S,T}$ such that 
$\mu=(\mu_1,\ldots, \mu_n)\in \nu$ satisfies $\sum_{i \in R^+_{\mu}} \mu_i > r+|R^+_{\mu}|s$.
\end{tr}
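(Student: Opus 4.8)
```latex
The plan is to translate the statement about the kernel of the quotient map into the language of cellular algebra theory combined with the weight-set analysis already developed in Section~1. Recall that a cellular basis $\{C^{\lambda}_{S,T}\mid \lambda\in\nu\}$ of $S(n,r+(n-1)s)$ organizes the algebra by a poset of weights, and that for each $\lambda$ the span of all $C^{\mu}_{S,T}$ with $\mu\unrhd\lambda$ (in the chosen order) is a two-sided ideal; moreover the cell module $\Delta(\lambda)$ and its simple head $L(\lambda)$ are indexed by $\lambda\in\nu$. The surjection $S(n,r+(n-1)s)\twoheadrightarrow S(\pi')\twoheadrightarrow S(\pi'')$ from the introduction, together with the identification $S(\pi'')\simeq S(n,r,s)$ furnished by Lemma~\ref{lm1.5}, realizes $S(n,r,s)$ as a generalized Schur algebra whose simple modules are indexed by exactly those weights lying in $\pi''$. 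The key point is that $\pi''$ is cut out inside $\nu=\Lambda^+(n,r+(n-1)s)$ by the inequality $\sum_{i\in R^+_{\mu}}\mu_i\leq r+|R^+_{\mu}|s$, so the complementary set $\nu\setminus\pi''$ is precisely $\{\mu\in\nu\mid \sum_{i\in R^+_{\mu}}\mu_i> r+|R^+_{\mu}|s\}$, which is the set appearing in the theorem.

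First I would establish that the kernel of the quotient map is the ideal generated, in cellular terms, by the weights that are discarded when passing from $\nu$ to $\pi''$. Because $S(n,r,s)=S(\pi'')$ is a generalized Schur algebra obtained by restricting to the saturated subset $\pi''\subset\nu$, the defining kernel consists of all elements annihilating every module in the category $\mathcal{C}(\pi'')$, equivalently every module whose composition factors are among $\{L(\mu)\mid\mu\in\pi''\}$. The standard structure theory of cellular algebras associated to saturated subsets says that this kernel is spanned by the cellular basis elements $C^{\mu}_{S,T}$ whose weight $\mu$ lies outside $\pi''$: the surviving quotient is itself cellular with cell datum indexed by $\pi''$, and the discarded basis elements span exactly the kernel. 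I would invoke the saturatedness of $\pi''$ (inherited from $\Lambda^+(n,r,s)$ via Lemma~\ref{lm1.4} and Lemma~\ref{lm1.5}) to guarantee that the span of $\{C^{\mu}_{S,T}\mid \mu\notin\pi''\}$ is genuinely a two-sided ideal compatible with the reverse dominance order, so that the quotient makes sense as a cellular algebra.

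Next I would verify the explicit inequality characterization. By Lemma~\ref{lm1.5} a weight $\lambda\in\nu$ belongs to $\pi''$ if and only if $\sum_{i\in R^+_{\lambda}}\lambda_i\leq r+|R^+_{\lambda}|s$, via the shift $\mu=\lambda-s\nu$ with $\nu=(1,\ldots,1)$. Thus $\mu\in\nu\setminus\pi''$ is equivalent to the strict inequality $\sum_{i\in R^+_{\mu}}\mu_i> r+|R^+_{\mu}|s$, exactly as stated. Combining this with the cellular description of the kernel from the previous step gives that the kernel is spanned by all $C^{\mu}_{S,T}$ with $\mu$ satisfying that strict inequality, completing the argument.

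The main obstacle I anticipate is justifying rigorously, and in a characteristic-free manner, that the kernel of a quotient of a cellular algebra onto a generalized Schur algebra indexed by a saturated subset is \emph{spanned} by the complementary cellular basis elements, rather than merely containing their span or being generated by them as an ideal. In characteristic zero this is transparent from semisimplicity, but here $\ch(K)$ is arbitrary, so I would need to appeal to the base-independence of $\dim S(n,r,s)$ (which, as Donkin's cited argument records, follows from its realization as a generalized Schur algebra) to force the spanning set to be a basis by a dimension count: the discarded basis elements number exactly $\dim S(n,r+(n-1)s)-\dim S(n,r,s)$, so once they are shown to lie in the kernel and the kernel has that dimension, they must span it. Making this dimension comparison uniform across all characteristics is the delicate point, and it is precisely where the generalized-Schur-algebra realization of $S(n,r,s)$ does the essential work.
```
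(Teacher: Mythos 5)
Your proposal is correct and follows the same overall architecture as the paper's proof: identify the discarded set $\nu'=\nu\setminus\pi''$ via the inequality in Lemma~\ref{lm1.5}, show that the span of the corresponding cellular basis elements is a two-sided ideal contained in the kernel, and then force equality by a dimension count that rests on the characteristic-independence of $\dim S(n,r,s)$ coming from its realization as a generalized Schur algebra. The one step you treat genuinely differently is the closure property of $\nu'$ under the cell order: the paper proves it directly, by a case analysis on partial sums (comparing $u_{\kappa}$ with $u_{\mu}$ for $\kappa\unrhd\mu\in\nu'$), whereas you transport the saturatedness of $\Lambda^{+}(n,r,s)$ through the shift $\lambda\mapsto\lambda-s(1,\ldots,1)$ of Lemma~\ref{lm1.5}. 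Your route is valid, since the shift preserves both dominance comparisons and dominance of weights, and it is arguably cleaner; but note that the needed input is the saturatedness of $\Lambda^{+}(n,r,s)$ itself, which the paper imports from \cite{dd,d} --- Lemmas~\ref{lm1.4} and \ref{lm1.5} alone do not supply it, so cite that fact explicitly. Two further points should be made precise in a write-up. First, the containment of the discarded span $J$ in the kernel is justified in the paper by the concrete observation that $J$, as a left module, is filtered by standard modules $\Delta(\mu)$ with $\mu\in\nu'$; appealing to generic ``structure theory of cellular algebras'' is risky here because, for an \emph{arbitrary} cellular basis, the identification of the cellular quotient with the generalized Schur algebra $S(\pi'')$ is exactly what the theorem asserts, so one must avoid circularity. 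Second, your dimension count implicitly requires the term-by-term equality $\dim\Delta(\lambda)=\dim\Delta(\lambda-s(1,\ldots,1))$ under the bijection of Lemma~\ref{lm1.5}; the paper obtains this by twisting with the $s$-th power of the determinant, and without it the identity ``number of discarded basis elements $=\dim S(n,r+(n-1)s)-\dim S(n,r,s)$'' is not established. With these two details filled in, your argument coincides in substance with the paper's.
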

\begin{proof}
We modify the original argument in the proof of Theorem 6.5 of \cite{dd}, using Lemma \ref{lm1.4} and \ref{lm1.5}, and taking into account the fact that $S(n,r,s)=S(\Lambda^+(n,r,s))$ is a generalized Schur algebra.

Denote $A=S(n, r+(n-1)s)$ and $J$  to be the sum of all ideals $A[\unrhd\mu]$ for $\mu\in \nu$ such that 
$\sum_{i \in R^+_{\mu}} \mu_i > r+|R^+_{\mu}|s$. Denote the set of such weights $\mu$ by $\nu'$. 
We will show that $\nu'$ is saturated. Assume $\mu\in \nu'$ and $\kappa$ is a dominant weight such that $\kappa\unrhd \mu$. Then  $\kappa\in \nu$ because $\nu$ is saturated by $(1)$ of \cite{d}.
Additionally, we have
\[\sum_{i=1}^{u_{\mu}} \kappa_i \geq \sum_{i=1}^{u_{\mu}} \mu_i > r+u_{\mu}s.\]
If $u_\kappa\geq u_\mu$, then  
\[\sum_{i \in R^+_{\kappa}} \kappa_i = \sum_{i=1}^{u_\mu} \kappa_i+\sum_{i=u_\mu +1}^{u_{\kappa}} 
\kappa_i>r+u_\mu s+(u_\kappa-u_\mu)s=r+u_\kappa s.\] 
If $u_\kappa<u_\mu$, then $\sum_{i=1}^{u_\mu}\kappa_i>r+u_\mu s$ implies
\[\sum_{i\in R^+_{\kappa}} \kappa_i>r+u_\mu s-\sum_{i=u_{\kappa+1}}^{u_\mu} \kappa_i\geq
r+u_\mu s- (u_\mu-u_{\kappa})s=r+u_{\kappa}s.\]
Together, this shows that $\kappa\in \nu'$.

The ideal $J$, regarded as left $A$-module is filtered by the standard modules $\Delta(\mu)$ for $\mu\in \nu'$.
Therefore, $S_K(n,r,s)$ is a quotient of $A/J$.

The quotient $A/J$ has a filtration by standard modules $\Delta(\lambda)$ for $\lambda\in \nu-\nu'$. Its dimension is
$\sum_{\lambda\in \nu-\nu'} (dim(\Delta(\lambda))^2$.
By Lemma \ref{lm1.5}, there is a bijection between $\nu-\nu'$ and $\Lambda^+(n,r,s)$ that sends 
$\lambda\in \nu-\nu'$ to $\mu=\lambda-s(1, \ldots, 1)\in \Lambda^+(n,r,s)$.
Since $dim(\Delta(\lambda))=dim(\Delta(\mu))$ follows from tensoring with the $s$th power of the determinant, 
we have
$dim(A/J)=\sum_{\mu\in \Lambda^+(n,r,s)} (dim(\Delta(\mu))^2=dim(S(n,r,s))$.
As observed by Donkin, the last equality is true because $S(n,r,s)$ is a generalized Schur algebra, which implies 
that its dimension does not depend on the characteristic of the ground field. 
\end{proof}

\begin{cor}(compare to Corollary 6.6 of \cite{dd})
Under the quotient map 
$S(n,r+(n-1)s) \to S(n,r,s)$, the images of all $C^{\lambda}_{S,T}$, for $\lambda=(\lambda_1, \ldots, \lambda_n)
\in \Lambda^+(n,r+(n-1)s)$ satisfying 
the condition $\sum_{i \in R^+_{\lambda}} \lambda_i \leq r+|R^+_{\lambda}|s$ form a cellular basis of the rational Schur algebra $S_K(n,r,s)$.
\end{cor}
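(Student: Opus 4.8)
The plan is to deduce the corollary directly from the preceding theorem together with the standard structure theory of cellular algebras in the sense of Graham and Lehrer, so that essentially no new computation is needed. Recall that the cellular basis $\{C^{\lambda}_{S,T}\}$ of $A=S(n,r+(n-1)s)$ comes equipped with an anti-involution $*$ satisfying $(C^{\lambda}_{S,T})^*=C^{\lambda}_{T,S}$, and has the property that for every up-closed subset (coideal) $\Gamma$ of the indexing poset $(\nu,\unrhd)$ the span $A^{\Gamma}=\mathrm{span}\{C^{\mu}_{S,T}\mid \mu\in\Gamma\}$ is a two-sided, $*$-stable ideal. The general fact I would invoke is that the quotient of a cellular algebra by such a cell ideal $A^{\Gamma}$ is again cellular, with cell datum obtained by restricting the poset to $\nu\setminus\Gamma$ and with cellular basis the images of the remaining basis vectors $C^{\lambda}_{S,T}$ for $\lambda\in\nu\setminus\Gamma$.

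The theorem just proved supplies exactly the input this machine needs. There it is shown that the set $\nu'$ of weights $\mu$ with $\sum_{i\in R^+_{\mu}}\mu_i>r+|R^+_{\mu}|s$ is saturated in the reverse dominance order, that is, up-closed, and that the kernel $J$ of the quotient map $S(n,r+(n-1)s)\to S(n,r,s)$ equals $\sum_{\mu\in\nu'}A[\unrhd\mu]$. Because $\nu'$ is up-closed, this sum coincides with $A^{\nu'}$, the span of the cellular basis elements indexed by $\nu'$. Thus $J$ is precisely a cell ideal of the type above, with $\Gamma=\nu'$.

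Applying the quotient theorem with $\Gamma=\nu'$, I would obtain that $A/J\simeq S(n,r,s)$ is cellular and that the images of $C^{\lambda}_{S,T}$ for $\lambda\in\nu\setminus\nu'$ form a cellular basis. It then remains only to identify the index set: by definition $\nu\setminus\nu'$ consists of those $\lambda\in\Lambda^+(n,r+(n-1)s)$ with $\sum_{i\in R^+_{\lambda}}\lambda_i\le r+|R^+_{\lambda}|s$, which is exactly the condition in the statement (and equals $\pi''$). Linear independence of the images is automatic, since the $C^{\lambda}_{S,T}$ with $\lambda\in\nu\setminus\nu'$ are part of a basis of $A$ whose complementary members span $J$; this is also consistent with the dimension equality $\dim(A/J)=\dim S(n,r,s)$ recorded in the theorem.

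The step requiring the most care is checking that the general cellular quotient theorem genuinely applies, that is, that $J$ is a $*$-stable two-sided ideal of cell-ideal type and that the cellular multiplication rule descends to $A/J$. The $*$-stability is immediate, since $*$ sends $C^{\mu}_{S,T}$ to $C^{\mu}_{T,S}$ and hence preserves the span indexed by each fixed $\mu$, so it preserves $A^{\nu'}=J$. That $J$ is a two-sided ideal and that the multiplication axiom passes to the quotient both rest on $\nu'$ being up-closed in the $\unrhd$ order, which is precisely the saturation property established in the theorem. Concretely, reducing modulo $J$ the defining congruences that express $aC^{\lambda}_{S,T}$ in terms of the $C^{\lambda}_{S',T}$ up to a combination of cellular basis elements indexed by weights strictly above $\lambda$ yields exactly the corresponding congruences in $A/J$ relative to the restricted poset $\nu\setminus\nu'$. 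This combinatorial input is the only non-formal ingredient; the rest is a direct transcription of the Graham--Lehrer construction.
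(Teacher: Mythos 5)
Your proposal is correct and is exactly the intended derivation: the paper states this corollary without proof, as an immediate consequence of the preceding theorem, whose content (the kernel $J$ equals the span of the $C^{\mu}_{S,T}$ with $\mu\in\nu'$, and $\nu'$ is saturated, i.e.\ closed upward under $\unrhd$) is precisely the input needed for the standard Graham--Lehrer fact that quotienting a cellular algebra by the $*$-stable cell ideal attached to such a subset yields a cellular algebra whose basis is the image of the remaining $C^{\lambda}_{S,T}$. Your identification of the surviving index set $\nu\setminus\nu'$ with $\pi''$, and the linear-independence remark via the dimension equality $\dim(A/J)=\dim S(n,r,s)$, complete the argument in the same way the paper implicitly does.
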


\section{Properties of binomials}\label{3}

We recall some properties of binomial coefficients needed in what follows.

The following statement is taken from \cite{hab}.

\begin{lm}\label{lm3.1}
If $a=a_0+a_1p+\ldots+ a_l p^l $ and $b=b_0+b_1p+\ldots+b_kp^k$ are $p$-adic expansions of $a>0$ and $b>0$
(so that $0\leq a_i,b_j <p$ for each $i$ and $j$), then 
\[\binom{a}{b}\equiv\binom{a_0}{b_0}\binom{a_1}{b_1}\cdots \binom{a_m}{b_m} \pmod p, \text{where } m=\max\{k,l\}.\]
\end{lm}

In a binomial coefficient $\binom{a}{b}$, we always assume that $b>0$ and allow any integral value for $a$.
If $a<0$, then $\binom{a}{b}=(-1)^b \binom{-a+b-1}{b}$, as usual.

The formula for the multiplication of the binomial coefficients $\binom{H}{b}$ and $\binom{H}{a}$ is given in the next statement; see p.91 of \cite{mz0}.

\begin{lm}\label{lm3.2}
Let $a,b>0$. Then 
\[\binom{H}{b}\binom{H}{a}=\binom{H}{a}\binom{H}{b}=\sum_{j=0}^{\min\{a,b\}} \binom{a+b-j}{a-j} \binom{b}{j} \binom{H}{a+b-j}.\]
\end{lm}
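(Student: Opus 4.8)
The plan is to treat the asserted equality as a universal polynomial identity in the single indeterminate $H$ and to reduce its verification to a counting argument. The first equality, $\binom{H}{b}\binom{H}{a}=\binom{H}{a}\binom{H}{b}$, is immediate: both factors are polynomials in the single element $H$, hence commute. So the real content is the explicit expansion on the right. First I would observe that both sides are polynomials in $H$ of degree $a+b$, and that the elements $\binom{H}{m}$ for $0\le m\le a+b$ form a basis of the space of polynomials of degree at most $a+b$. Consequently it suffices to verify the identity after substituting for $H$ every nonnegative integer $N$ (any $a+b+1$ distinct values would already force equality, and all $N\ge 0$ is the convenient choice), since two polynomials of degree at most $a+b$ that agree at more than $a+b$ points coincide.

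Next, for $H=N$ a nonnegative integer I would give a direct combinatorial count. Fixing an $N$-element set, the product $\binom{N}{b}\binom{N}{a}$ counts ordered pairs $(B,A)$ of subsets with $|B|=b$ and $|A|=a$. I would classify such pairs by the size $j=|A\cap B|$, so that $|A\cup B|=a+b-j$, which forces $0\le j\le\min\{a,b\}$. Choosing the union $U=A\cup B$ in $\binom{N}{a+b-j}$ ways and then partitioning $U$ into the three blocks $A\setminus B$, $B\setminus A$, $A\cap B$ of respective sizes $a-j$, $b-j$, $j$ contributes the multinomial factor $\tfrac{(a+b-j)!}{(a-j)!\,(b-j)!\,j!}$. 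Collecting terms yields
\[\binom{N}{b}\binom{N}{a}=\sum_{j=0}^{\min\{a,b\}}\frac{(a+b-j)!}{(a-j)!\,(b-j)!\,j!}\,\binom{N}{a+b-j}.\]

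Finally I would reconcile this with the stated coefficient through the elementary factorial identity
\[\binom{a+b-j}{a-j}\binom{b}{j}=\frac{(a+b-j)!}{(a-j)!\,(b-j)!\,j!},\]
which is checked by writing out the factorials (using $(a+b-j)-(a-j)=b$). This identifies the combinatorial count with the asserted formula, and by the polynomial principle of the first paragraph it proves the lemma for $H$ regarded as an element of $Dist(T)$. The only genuinely delicate point, and thus the main obstacle, is the bookkeeping in the combinatorial step: one must confirm that the classification by intersection size $j$ is both exhaustive and disjoint and that the multinomial factor counts each pair exactly once. Everything else is routine. An alternative, purely algebraic route would iterate the subset-of-a-subset identity $\binom{H}{a}\binom{a}{j}=\binom{H}{j}\binom{H-j}{a-j}$ together with Vandermonde's convolution, but the counting argument is shorter and more transparent.
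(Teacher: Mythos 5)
Your proof is correct, and it is worth noting that the paper itself contains no proof of Lemma \ref{lm3.2}: the identity is simply cited from p.~91 of \cite{mz0} (with a passing remark in Section 4 that the corresponding relations $(a_{i,a,b})$ follow by induction from $(H_i-n)\binom{H_i}{n}=(n+1)\binom{H_i}{n+1}$, $\binom{H_i}{0}=1$). So your argument is a self-contained substitute rather than a variant of anything in the text. Both of your ingredients are sound: the stratification of pairs $(B,A)$ by $j=|A\cap B|$ is exhaustive and disjoint, the multinomial bookkeeping is exact, and $\binom{a+b-j}{a-j}\binom{b}{j}=\frac{(a+b-j)!}{(a-j)!\,(b-j)!\,j!}$ checks out; since both sides are polynomials of degree $a+b$ over $\mathbb{Q}$, agreement at all nonnegative integers (indeed at any $a+b+1$ points) forces equality in $\mathbb{Q}[H]$. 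The one step you should make explicit is the final descent to $Dist(T)$, which in this paper must work in characteristic $p$: the interpolation principle is valid only in characteristic zero (in characteristic $p$ the polynomial $H^p-H$ vanishes on all of $\mathbb{F}_p$ without being zero), so the correct logic is that your identity holds in $\mathbb{Q}[H]$, that all coefficients $\binom{a+b-j}{a-j}\binom{b}{j}$ are integers and both sides lie in the $\mathbb{Z}$-form ${}_{\mathbb{Z}}Dist(T)$ spanned by the binomials $\binom{H}{m}$, and that the identity in $Dist_K(T)=K\otimes_{\mathbb{Z}}{}_{\mathbb{Z}}Dist(T)$ is then its image under base change. With that sentence added, your proof is complete, and it buys something the paper's citation does not: it exhibits the formula as an integral identity, which is exactly the form in which it is used for the $\mathbb{Z}$-form and Frobenius-kernel arguments of Sections 4--6.
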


We frequently use \emph{Kummer's criterion} stating that for $a>b$,  $\binom{a}{b}$ is a multiple of $p$ if and only if there is a $p$-adic carry when $a-b$ is added to $b$.

Assume that the variable $H$ can assume integral values only.

\begin{lm}\label{lm3.3}
Let $b=b_0+b_1p+\ldots + b_kp^k$ be a $p$-adic expansion of $b>0$. Then 
\[\binom{H}{b}\equiv \binom{H}{b_0}\binom{H}{b_1p}\cdots \binom{H}{b_kp^k} \pmod p.\] 
\end{lm}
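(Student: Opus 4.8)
The plan is to reduce the claimed identity to a pointwise congruence at nonnegative integer specializations $H=a$, verify that congruence by two applications of Lemma \ref{lm3.1}, and then upgrade the result from $\mathbb{Z}_{\geq 0}$ to all of $\mathbb{Z}$ by an integer-valued polynomial argument.

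First I would fix a nonnegative integer $a$ and write its $p$-adic expansion $a=a_0+a_1p+\ldots+a_mp^m$, choosing $m\geq k$ and padding the expansion of $b$ by zeros $b_{k+1}=\ldots=b_m=0$ when necessary. Applying Lemma \ref{lm3.1} to $\binom{a}{b}$ then gives
\[\binom{a}{b}\equiv\binom{a_0}{b_0}\binom{a_1}{b_1}\cdots\binom{a_m}{b_m}\pmod p.\]
Next I would treat each factor on the right-hand side of the asserted formula separately. Since $0\leq b_i<p$, the $p$-adic expansion of $b_ip^i$ has the single nonzero digit $b_i$ in position $i$, so Lemma \ref{lm3.1} yields
\[\binom{a}{b_ip^i}\equiv\binom{a_i}{b_i}\pmod p,\]
where factors with $b_i=0$ are trivially equal to $1$ and may be discarded. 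Multiplying these congruences over $i=0,\ldots,k$ and comparing with the previous display gives
\[\prod_{i=0}^{k}\binom{a}{b_ip^i}\equiv\prod_{i=0}^{k}\binom{a_i}{b_i}\equiv\binom{a}{b}\pmod p,\]
which is exactly the desired congruence evaluated at $H=a$. (The boundary case $a=0$, where $\binom{0}{b}=0$ and at least one factor $\binom{0}{b_ip^i}$ vanishes, is checked directly and is consistent.)

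Finally, to pass from nonnegative integers to arbitrary integral values of $H$, I would consider the difference $f(H)=\binom{H}{b}-\prod_{i=0}^{k}\binom{H}{b_ip^i}$, which is an integer-valued polynomial in $H$. Expanding it in the binomial basis as $f(H)=\sum_{j}c_j\binom{H}{j}$ with $c_j\in\mathbb{Z}$, the coefficients are recovered by finite differences at $0$, namely $c_j=\sum_{i=0}^{j}(-1)^{j-i}\binom{j}{i}f(i)$, which involve only the values $f(0),f(1),\ldots$. The previous step shows $f(a)\equiv 0\pmod p$ for every $a\geq 0$, so each $c_j\equiv 0\pmod p$, and hence $f(H)\equiv 0\pmod p$ for \emph{every} integer $H$, since each $\binom{H}{j}$ is an integer; under the convention $\binom{a}{b}=(-1)^b\binom{-a+b-1}{b}$ for $a<0$ this also covers negative values of $H$. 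The main obstacle is precisely this last extension step: everything before it is a direct application of Lemma \ref{lm3.1}, so the only real content is justifying that a congruence of integer-valued polynomials verified on $\mathbb{Z}_{\geq 0}$ propagates to all of $\mathbb{Z}$ and matches the negative-argument convention.
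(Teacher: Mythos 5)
Your proof is correct, but it takes a genuinely different route from the paper's. The paper proves the congruence by induction on the number of $p$-adic digits of $b$: writing $b=b'+b_{k+1}p^{k+1}$, it expands the product $\binom{H}{b'}\binom{H}{b_{k+1}p^{k+1}}$ in the binomial basis using the multiplication formula of Lemma \ref{lm3.2} and then kills every term except $\binom{H}{b}$ by Kummer's criterion. That argument never evaluates $H$; it establishes the identity coefficient-by-coefficient in the $\mathbb{Z}$-span of the $\binom{H}{j}$, which is precisely the form used later (in Sections 4 and 5 the congruence is applied inside $Dist(T)$, where $H_i$ is a formal generator rather than an integer). Your argument instead specializes $H$ to nonnegative integers, applies Lucas' theorem (Lemma \ref{lm3.1}) twice, and then upgrades the pointwise congruence via the Newton finite-difference expansion of integer-valued polynomials. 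This is sound: the expansion $f(H)=\sum_j c_j\binom{H}{j}$ with $c_j=\sum_{i=0}^{j}(-1)^{j-i}\binom{j}{i}f(i)$ is the standard basis theorem for integer-valued polynomials, and your observation that all $c_j\equiv 0\pmod p$ in fact delivers the stronger \emph{formal} statement (the difference lies in $p$ times the $\mathbb{Z}$-span of the $\binom{H}{j}$), which is exactly what the downstream applications in $Dist(T)$ require — so the interpolation step you flag as the main obstacle is not merely a patch for negative $H$, it is what makes your proof usable in the paper. The trade-off: your route avoids Lemma \ref{lm3.2} and Kummer's criterion entirely and runs on Lucas alone, at the cost of importing the integer-valued polynomial machinery; the paper's route stays inside the algebra of formal binomials and needs no evaluation, at the cost of a slightly fussier inductive computation.
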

\begin{proof}
We use induction on $k$. The statement is trivial for $k=0$. 

Assume it is valid for $k$ and consider
$b=b_0+b_1p+\ldots +b_kp^k+b_{k+1}p^{k+1}$, and $b'=b_0+b_1p+\ldots +b_kp^k$. Then it is enough to verify that \
$\binom{H}{b}\equiv \binom{H}{b'}\binom{H}{b_{k+1}p^{k+1}}\pmod p$.
Using Lemma \ref{lm3.2}, we compute
\[\binom{H}{b'}\binom{H}{b_{k+1}p^{k+1}}=\sum_{j=0}^{b'} \binom{b-j}{b'-j} \binom{b_{k+1}p^{k+1}}{j} \binom{H}{b-j}\equiv \binom{H}{b} \pmod p\]
using Kummer's criterion.
\end{proof}

\begin{lm}\label{lm3.4}
If $0\leq a_i<p$, then $\binom{H}{a_ip^i}$ is a polynomial in $\binom{H}{p^i}$ modulo $p$.
\end{lm}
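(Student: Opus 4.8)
The plan is to build $\binom{H}{a_ip^i}$ out of $\binom{H}{p^i}$ by induction on the digit multiplier $k$, using the multiplication rule of Lemma \ref{lm3.2} to pass from $\binom{H}{kp^i}$ to $\binom{H}{(k+1)p^i}$. The base cases are immediate: $a_i=0$ gives the constant $1$, and $a_i=1$ gives $\binom{H}{p^i}$ itself. For $p=2$ the only digits are $0$ and $1$, so nothing further is needed; the substance of the lemma lies in the range $2\le a_i\le p-1$, where I expect to recover the clean closed form $\binom{H}{a_ip^i}\equiv\binom{\binom{H}{p^i}}{a_i}\pmod p$.

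The key step is to compute $\binom{H}{kp^i}\binom{H}{p^i}$ modulo $p$ for $1\le k\le p-2$. Applying Lemma \ref{lm3.2} with $b=kp^i$ and $a=p^i$ produces a sum over $j$ of terms $\binom{(k+1)p^i-j}{p^i-j}\binom{kp^i}{j}\binom{H}{(k+1)p^i-j}$. Here is where the arithmetic of Lemma \ref{lm3.1} together with Kummer's criterion does the work: since $kp^i$ has the single nonzero base-$p$ digit $k$ in position $i$, the factor $\binom{kp^i}{j}$ is nonzero modulo $p$ only when $j=mp^i$ with $0\le m\le k$. Among these, the factor $\binom{(k+1)p^i-j}{p^i-j}=\binom{(k+1-m)p^i}{(1-m)p^i}$ vanishes as soon as $m\ge 2$ because its lower index becomes negative, so only $m=0$ and $m=1$ survive. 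Evaluating the two surviving coefficients by Lemma \ref{lm3.1}, namely $\binom{(k+1)p^i}{p^i}\equiv k+1$ and $\binom{kp^i}{p^i}\equiv k$, yields the recursion
\[\binom{H}{kp^i}\binom{H}{p^i}\equiv (k+1)\binom{H}{(k+1)p^i}+k\binom{H}{kp^i}\pmod p.\]

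Finally I would solve this recursion. Rewriting it as $\binom{H}{(k+1)p^i}\equiv\frac{1}{k+1}\bigl(\binom{H}{p^i}-k\bigr)\binom{H}{kp^i}$ and iterating from $\binom{H}{p^i}$ shows, by induction on $k$, that $\binom{H}{kp^i}\equiv\binom{\binom{H}{p^i}}{k}\pmod p$, a polynomial of degree $k$ in $\binom{H}{p^i}$; specializing at $k=a_i$ gives the lemma. The main obstacle, and the reason the hypothesis $a_i<p$ is essential, is the invertibility modulo $p$ of the coefficient $k+1$ used to solve for $\binom{H}{(k+1)p^i}$: this step only makes sense while $k+1\le p-1$, which is exactly what staying below the single-digit bound guarantees, and it is also what forces the careful bookkeeping (via Lemma \ref{lm3.1} and the negative-lower-index vanishing) that isolated the two-term recursion in the previous step.
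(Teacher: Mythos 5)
Your proof is correct and follows essentially the same route as the paper: both apply Lemma \ref{lm3.2} to the product $\binom{H}{kp^i}\binom{H}{p^i}$, use Kummer's criterion (equivalently Lemma \ref{lm3.1}) to see that only the $j=0$ and $j=p^i$ terms survive, and then solve the resulting two-term recursion $(k+1)\binom{H}{(k+1)p^i}\equiv\bigl(\binom{H}{p^i}-k\bigr)\binom{H}{kp^i}\pmod p$ by induction on the digit, using that $k+1$ is invertible modulo $p$ exactly because $a_i<p$. The only difference is cosmetic: the paper stops at expressing $\binom{H}{a_ip^i}$ inductively as a polynomial in $\binom{H}{p^i}$, while you iterate the recursion to the explicit closed form $\binom{H}{a_ip^i}\equiv\binom{\binom{H}{p^i}}{a_i}\pmod p$, which the paper does not state.
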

\begin{proof}
We use induction on $a_i$. The cases when $a_i=0$ or $a_i=1$ are trivial. Assume $a_i>1$, and using Kummer's criterion,
compute
\[\begin{aligned}\binom{H}{(a_i-1)p^i}\binom{H}{p^i}&=\sum_{j=0}^{p^i} \binom{a_ip^i-j}{(a_i-1)p^i-j}\binom{p^i}{p^i-j}\binom{H}{a_ip^i-j}\\
&\equiv \binom{a_i}{a_i-1}\binom{H}{a_ip^i}+\binom{a_i-1}{a_i-2}\binom{H}{(a_i-1)p^i}\pmod p.
\end{aligned}\]
Since $\binom{a_i}{a_i-1}\not \equiv 0 \pmod p$, we express $\binom{H}{a_ip^i}$ as a linear combination 
of $\binom{H}{(a_i-1)p^i}$ and $\binom{H}{(a_i-1)p^i}\binom{H}{p^i}$ modulo $p$. Using the inductive assumption, this is a polynomial in $\binom{H}{p^i}$ modulo $p$.
\end{proof}

\begin{lm}\label{lm3.5}
Assume $l\geq 0$. Then 
$\binom{a}{p^j}\equiv 0\pmod p$ for every $j\geq l$ if and only if $0\leq a<p^l$.
\end{lm}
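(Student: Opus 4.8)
The plan is to reduce the whole statement to reading off a single $p$-adic digit of $a$ by means of Lucas' theorem (Lemma \ref{lm3.1}), exploiting that $p^j$ has the especially simple $p$-adic expansion consisting of a single digit $1$ in position $j$ and $0$ everywhere else. I would first dispose of the trivial case $a=0$ (where $\binom{0}{p^j}=0$ for all $j\geq 0$) and then treat $a>0$: writing $a=a_0+a_1p+\ldots+a_mp^m$ with $0\leq a_i<p$, Lemma \ref{lm3.1} gives
\[\binom{a}{p^j}\equiv \binom{a_j}{1}\prod_{i\neq j}\binom{a_i}{0}=a_j \pmod p\]
for every $j$, where $a_j$ is understood to be $0$ once $j>m$. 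This single computation is the heart of the argument; everything else is bookkeeping.

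Next I would observe that, since $0\leq a_j<p$, the congruence $a_j\equiv 0\pmod p$ holds exactly when $a_j=0$. Hence the hypothesis that $\binom{a}{p^j}\equiv 0\pmod p$ for all $j\geq l$ is equivalent to the vanishing of every $p$-adic digit of $a$ in position $\geq l$, and this in turn is equivalent to $0\leq a<p^l$. This settles both implications of the lemma whenever $a\geq 0$.

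Finally I would rule out negative $a$, so that the inequality $0\leq a<p^l$ genuinely forces $a\geq 0$; this is the only part needing care. Setting $c=-a>0$ and using the convention $\binom{a}{p^j}=(-1)^{p^j}\binom{c+p^j-1}{p^j}$, I would apply Kummer's criterion to $\binom{c+p^j-1}{p^j}$: its divisibility by $p$ is governed by the carries occurring when $p^j$ and $c-1$ are added in base $p$. Choosing $j\geq l$ large enough that $p^j>c-1$, the digit $1$ of $p^j$ lands in a position where $c-1$ has digit $0$, so no carry occurs and $\binom{a}{p^j}\not\equiv 0\pmod p$ (the edge case $c=1$ is immediate, since then $\binom{c+p^j-1}{p^j}=\binom{p^j}{p^j}=1$). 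Thus the left-hand condition fails for every $a<0$, which completes the equivalence. Since the proof is essentially a direct application of Lemmas \ref{lm3.1} and Kummer's criterion, I do not anticipate a serious obstacle beyond verifying the absence of carries for large $j$ in this negative case.
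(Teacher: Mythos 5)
Your proof is correct and follows essentially the same route as the paper: for $a\geq 0$ both arguments apply Lemma \ref{lm3.1} to read off the digit $a_j$ from $\binom{a}{p^j}$, and for $a<0$ both pick $j$ with $p^j>-a-1$ and show the resulting binomial is a unit modulo $p$. The only cosmetic difference is that in the negative case you invoke Kummer's carry criterion (carefully noting the $c=1$ edge case), whereas the paper evaluates $\binom{-a-1+p^k}{p^k}\equiv 1 \pmod p$ directly.
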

\begin{proof}
Assume first $a\geq 0$ and consider the $p$-adic expansion of $a=a_0+a_1p+\ldots+a_kp^k$, where $0\leq a_i<p$
for each index $i$. By Lemma \ref{lm3.1}, $\binom{a}{p^j}\equiv 0 \pmod p$ if and only if $a_j=0$. Therefore, we infer that $0\leq a<p^l$.

Assume $a<0$ and $k$ is such that $-a-1<p^k$. Then $\binom{a}{p^k}=\binom{-a-1+p^k}{p^k}\equiv 1 \pmod p$. 
Therefore no $a<0$ can satisfy $\binom{a}{p^j}\equiv 0 \pmod p$ for every $j\geq l$.
\end{proof}
\begin{cor}\label{cor1}
$\binom{a}{p^j}\equiv 0 \pmod p$ for every $j\geq 0$ if and only if $a=0$.
\end{cor}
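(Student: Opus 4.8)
The plan is to recognize this corollary as the boundary case $l=0$ of Lemma \ref{lm3.5} and to deduce it directly, since all of the substance is already contained in that lemma. Setting $l=0$, the hypothesis "$\binom{a}{p^j}\equiv 0\pmod p$ for every $j\geq l$" becomes exactly "$\binom{a}{p^j}\equiv 0\pmod p$ for every $j\geq 0$", while the conclusion "$0\leq a<p^l$" becomes "$0\leq a<p^0=1$". Since $a$ ranges over the integers, the inequality $0\leq a<1$ holds if and only if $a=0$, which is precisely the equivalence asserted.

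Before invoking the lemma I would check the easy direction to confirm the conventions are consistent. When $a=0$, each binomial $\binom{0}{p^j}$ with $j\geq 0$ has $p^j\geq 1>0$, so under the standing convention that $b>0$ in $\binom{a}{b}$ every such coefficient equals $0$ and is therefore $\equiv 0\pmod p$. This includes the borderline term $\binom{0}{1}=0$ at $j=0$, so there is no difficulty at the smallest index. For the converse I would simply apply Lemma \ref{lm3.5} with $l=0$: the hypothesis forces $0\leq a<1$, hence $a=0$.

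I expect no genuine obstacle, as the corollary is an immediate specialization. The one point I would verify is that the proof of Lemma \ref{lm3.5} remains valid at $l=0$; in particular that its exclusion of negative $a$ — where one produces some index $j$ with $\binom{a}{p^j}\equiv 1\pmod p$ by choosing $k$ with $-a-1<p^k$ — makes no tacit assumption that $l>0$. Inspecting that argument, the treatment of $a<0$ is independent of $l$, and the nonnegative case is governed by the $p$-adic expansion via Lemma \ref{lm3.1}, so the instance $l=0$ is legitimate and the corollary follows at once.
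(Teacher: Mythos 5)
Your proposal is correct and is exactly the paper's intended argument: the corollary is stated immediately after Lemma \ref{lm3.5} with no separate proof, precisely because it is the specialization $l=0$, where $0\leq a<p^0=1$ forces $a=0$. Your additional checks (the direction $a=0$ under the convention $b>0$, and that the lemma's treatment of $a<0$ does not tacitly assume $l>0$) are sound and consistent with the paper.
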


It is easy to verify the \emph{Pascal's identity} 
\[\binom{a}{b}=\binom{a-1}{b}+\binom{a-1}{b-1}\]
for any $b>1$ and arbitrary $a$.

We require the following two lemmas describing shifts in the binomial expressions. 

\begin{lm}\label{lm3.7}
For $s>0$, we have
\[\binom{H+s}{j}=\sum_{t=0}^j \binom{H}{t}\binom{s}{j-t}.\]
\end{lm}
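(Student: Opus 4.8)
This is the \emph{Vandermonde convolution identity} with $H$ regarded as a formal integer-valued variable and $s,j$ fixed, so the plan is to prove it by induction on $s$, using the Pascal identity recalled just above.

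For the base case $s=1$, Pascal's identity gives $\binom{H+1}{j}=\binom{H}{j}+\binom{H}{j-1}$, while the right-hand side collapses to $\binom{H}{j}\binom{1}{0}+\binom{H}{j-1}\binom{1}{1}$ because $\binom{1}{j-t}=0$ whenever $j-t\geq 2$; the two expressions agree. For the inductive step, assume the identity holds for a given $s$ and all values of $H$. Apply Pascal's identity to the left-hand side to write $\binom{H+s+1}{j}=\binom{H+s}{j}+\binom{H+s}{j-1}$, then expand each summand by the inductive hypothesis. Collecting the coefficient of each $\binom{H}{t}$ produces $\binom{s}{j-t}+\binom{s}{j-1-t}$, which equals $\binom{s+1}{j-t}$ by Pascal's identity applied to the upper argument. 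This yields $\binom{H+s+1}{j}=\sum_{t=0}^{j}\binom{H}{t}\binom{s+1}{j-t}$, completing the induction.

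Since both sides are genuine polynomials in $H$ of degree $j$, an alternative route would be to verify the identity for all non-negative integers $H$—where it is the classical Vandermonde identity, immediate from comparing coefficients of $(1+x)^{H+s}=(1+x)^H(1+x)^s$—and then invoke agreement at infinitely many points to conclude the polynomial identity. The argument presents no genuine obstacle; the only point requiring care is the index bookkeeping in the inductive step, in particular checking that the boundary term $t=j$ (where the second sum is empty) and the vanishing convention $\binom{s}{-1}=0$ are handled consistently, so that the Pascal recombination on the upper argument is valid uniformly for every $t$ in the range $0\leq t\leq j$.
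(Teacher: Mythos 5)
Your proof is correct and follows essentially the same route as the paper's: an induction powered by two applications of Pascal's identity, one to split $\binom{H+s}{j}$ on the top argument and one to recombine the coefficients $\binom{s}{j-t}+\binom{s}{j-t-1}$ into $\binom{s+1}{j-t}$. The only difference is bookkeeping—the paper runs the induction on increasing values of $j$ and $s$ jointly, while you fix $j$ and induct on $s$ alone (and also sketch a valid alternative via polynomial interpolation)—but the core mechanism is identical.
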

\begin{proof}
The statement is trivial for $j=0$. Using induction on increasing values of $j$ and $s$, and Pascal identity, we compute
\[\begin{aligned}
\binom{H+s}{j+1}&=\binom{H+s-1}{j+1}+\binom{H+s-1}{j}=\sum_{t=0}^{j+1}\binom{H}{t}\binom{s-1}{j+1-t}
+\sum_{t=0}^j \binom{H}{t}\binom{s-1}{j-t}\\
&=\binom{H}{j+1}+\sum_{t=0}^j \binom{H}{t}\Big[\binom{s-1}{j+1-t}+\binom{s-1}{j-t}\Big]\\
&=\binom{H}{j+1}+\sum_{t=0}^j \binom{H}{t}\binom{s}{j+1-t}
=\sum_{t=0}^{j+1} \binom{H}{t}\binom{s}{j+1-t}.
\end{aligned}\]
\end{proof}

\begin{lm}\label{lm3.8}
For $r>0$, we have
\[\binom{H-r}{j}=\sum_{t=0}^j \binom{H}{t}\binom{-r}{j-t}.\]
\end{lm}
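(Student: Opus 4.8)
The plan is to prove this by mirroring the proof of Lemma \ref{lm3.7}, replacing the positive shift $s$ by the negative shift $-r$ and running a double induction on the pair $(r,j)$ whose only input is Pascal's identity. First I would dispose of the base cases: for $r=0$ the right-hand side collapses to $\binom{H}{j}$ since $\binom{0}{m}=0$ for $m>0$ and $\binom{0}{0}=1$, while for $j=0$ both sides equal $1$. These cases also pin down the reading of $\binom{-r}{m}$ through the convention $\binom{-r}{m}=(-1)^{m}\binom{r+m-1}{m}$ recorded before Lemma \ref{lm3.3}.

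For the inductive step I would apply Pascal's identity to the upper index in the ascending direction, $\binom{H-r+1}{j}=\binom{H-r}{j}+\binom{H-r}{j-1}$, and rewrite it as $\binom{H-r}{j}=\binom{H-(r-1)}{j}-\binom{H-r}{j-1}$. The first term on the right is covered by the inductive hypothesis at $r-1$ (for all $j$) and the second by the hypothesis at $(r,j-1)$, so the lexicographic induction on $(r,j)$ — outer on $r$, inner on $j$ — is available. Substituting the two expansions, I would then compare the coefficient of each $\binom{H}{t}$: the top term $t=j$ contributes $\binom{-(r-1)}{0}=1=\binom{-r}{0}$, and for $0\le t\le j-1$, writing $m=j-t\ge 1$, the required equality becomes $\binom{-(r-1)}{m}=\binom{-r}{m}+\binom{-r}{m-1}$.

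The main point to get right — the only step that is not purely formal bookkeeping — is recognizing that this last relation is precisely Pascal's identity $\binom{a+1}{m}=\binom{a}{m}+\binom{a}{m-1}$ evaluated at the negative value $a=-r$. Since Pascal's identity is a polynomial identity in the upper argument, it continues to hold for negative upper indices under the stated convention, so no positivity assumption on the upper entry is needed. With this observation the two sums collapse to $\sum_{t=0}^{j}\binom{H}{t}\binom{-r}{j-t}$ and the induction closes.

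Finally, I would remark that Lemmas \ref{lm3.7} and \ref{lm3.8} are two specializations of the Vandermonde convolution $\binom{x+y}{j}=\sum_{t=0}^{j}\binom{x}{t}\binom{y}{j-t}$, so an alternative is to deduce Lemma \ref{lm3.8} directly from Lemma \ref{lm3.7} by viewing both sides as polynomials in the shift variable and substituting the value $-r$. The only caveat, which is why I prefer the elementary induction here, is that this polynomial comparison should be carried out over $\mathbb{Q}$, where the positive integers form infinitely many distinct values; the resulting identity of integer-valued binomials then specializes to positive characteristic without change.
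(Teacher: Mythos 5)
Your proof is correct and takes essentially the same route as the paper: a double induction on increasing $r$ and $j$, splitting $\binom{H-r}{j}=\binom{H-(r-1)}{j}-\binom{H-r}{j-1}$ via Pascal's identity, invoking the inductive hypotheses at $r-1$ and at $(r,j-1)$, and recombining coefficients with Pascal's identity at the negative upper index $-r$. (Incidentally, your version fixes a typo in the paper's displayed computation, where $\binom{H-r-1}{j+1}$ should read $\binom{H-r+1}{j+1}$, as the subsequent expansion into $\sum_{t=0}^{j+1}\binom{H}{t}\binom{-r+1}{j+1-t}$ makes clear.)
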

\begin{proof}
For $j=0$, the statement is trivial. Using induction on increasing values of $j$ and $r$, and Pascal identity, we compute
\[\begin{aligned}
\binom{H-r}{j+1}&=\binom{H-r-1}{j+1}-\binom{H-r}{j}=\sum_{t=0}^{j+1}\binom{H}{t}\binom{-r+1}{j+1-t}
-\sum_{t=0}^j \binom{H}{t}\binom{-r}{j-t}\\
&=\binom{H}{j+1}+\sum_{t=0}^j \binom{H}{t}\Big[\binom{-r+1}{j+1-t}-\binom{-r}{j-t}\Big]\\
&=\binom{H}{j+1}+\sum_{t=0}^j \binom{H}{t}\binom{-r}{j+1-t}
=\sum_{t=0}^{j+1} \binom{H}{t}\binom{-r}{j+1-t}.
\end{aligned}\]
\end{proof}

\section{Distribution algebras of Frobenius kernels}
Assume roots $\alpha, \beta\in \Phi$ are such that $\alpha=\epsilon_i-\epsilon_j,\beta=\epsilon_k-\epsilon_l$, where $i\neq j, k\neq l$. Following \cite{dg}, denote $H_{\alpha}=H_i-H_j$, and if $\alpha+\beta\in \Phi$,  then denote  
$c_{\alpha,\beta}=1$ if $j=k$ and $i\neq l$, and $c_{\alpha,\beta}=-1$ if $i=l$, $j\neq k$.

Denote by $G_m$ the $m$th Frobenius kernel of $G=\GL(n)$. The $\mathbb{Z}$-form $_{\mathbb Z}Dist(G_m)$ of the $m$-th Frobenius kernel $G_m$ of $G$ is generated by binomials $\binom{H_i}{k}$ and divided powers 
$x_{\alpha}^{(k)}$ for roots $\alpha=\epsilon_i-\epsilon_j$, where $i\neq j$ and $0\leq k<q=p^m$, see Proposition 4.1 of \cite{hab}.
The (defining and commuting) 
relations (in $_{\mathbb Z}Dist(G_m)$ considered as a $\mathbb{Z}$-module) are given by
\[\begin{aligned}
(a_{i,a,b}): &\binom{H_i}{a}\binom{H_i}{b}=\sum_{j=0}^{\min\{a,b\}} \binom{a+b-j}{a-j}\binom{b}{b-j}\binom{H_i}{a+b-j}\\
&\text{ for } i=1,\ldots, n \text{ and } 0\leq a,b<q\\
(b_{\alpha,k,l}): &x_{\alpha}^{(k)}x_{\alpha}^{(l)}=\binom{k+l}{k} x_{\alpha}^{(k+l)}, x_{\alpha}^{(0)}=1 \text{ for } \alpha\in \Phi \text{ and } 0\leq k,l<q\\
(c_{i,j,a,b}): &\binom{H_i}{a}\binom{H_j}{b}=\binom{H_j}{b}\binom{H_i}{a} \text{ for } 1\leq i,j \leq n  \text{ and }0\leq a,b<q\\
(d_{\alpha,\beta,k,l}): &x_{\alpha}^{(k)}x_{\beta}^{(l)}=\sum_{a=0}^{\min\{k,l\}} c_{\alpha,\beta}^{a} x_{\beta}^{(l-a)}x_{\alpha+\beta}^{(a)}x_{\alpha}^{(k-a)}\\
& \text{ for } 0\leq k,l<q \text{ and } \alpha,\beta\in \Phi \text{ such that } \alpha+\beta\neq 0 \text{ and }
\alpha+\beta\in \Phi,\\
&\text{ where } c_{\alpha, \beta}=1 \text{ for } \alpha=\epsilon_i-\epsilon_j, \beta=\epsilon_j-\epsilon_l, i\neq l\\
&\text{ and } c_{\alpha,\beta}=-1 \text{ for } \alpha=\epsilon_i-\epsilon_j, \beta=\epsilon_k-\epsilon_i, j\neq k \\
(e_{\alpha,k,l}): &x_{\alpha}^{(k)}x_{-\alpha}^{(l)}=\sum_{a=0}^{\min\{k,l\}} x_{-\alpha}^{(l-a)}\binom{H_{\alpha}-k-l+2a}{a}x_{\alpha}^{(k-a)}\\
& \text{  for } 0\leq k,l<q \text{ and } \alpha\in \Phi
\end{aligned}\]
\[\begin{aligned}
(f_{\alpha,\beta,k,l}): &x_{\alpha}^{(k)}x_{\beta}^{(l)}=x_{\beta}^{(l)}x_{\alpha}^{(k)} \text{ for } 0\leq k,l<q \text{ and }\alpha,\beta\in \Phi \text{ such that }\\
&\alpha+\beta\neq 0 \text{ and }\alpha+\beta\notin\Phi
\end{aligned}\] 

If $\alpha\in \Phi$, $\alpha=\epsilon_i-\epsilon_j$ and $0\leq a,k<q$, then
\[\begin{aligned}
(g_{\alpha,k,a}): &\binom{H_i}{a}x_{\alpha}^{(k)}=x_{\alpha}^{(k)}\binom{H_i-k}{a},
\binom{H_j}{a}x_{\alpha}^{(k)}=x_{\alpha}^{(k)}\binom{H_j+k}{a} \\\
&\text{ and } \binom{H_l}{a}x_{\alpha}^{(k)}=x_{\alpha}^{(k)}\binom{H_l}{a}  \text{ for } l\neq i,j\\
(h_{\alpha,k,a}): &x_{\alpha}^{(k)}\binom{H_i}{a}=\binom{H_i+k}{a}x_{\alpha}^{(k)},
x_{\alpha}^{(k)}\binom{H_j}{a}=\binom{H_j-k}{a}x_{\alpha}^{(k)} 
\end{aligned}\]

In the above relations, we apply Lemmas \ref{lm3.7} and \ref{lm3.8} to express $\binom{H_i+k}{a}$ and $\binom{H_j-k}{a}$ for $0\leq a<q$ 
as a $\mathbb{Z}$-linear combinations of $\binom{H_i}{b}$ and $\binom{H_j}{b}$ for $0\leq b<q$.

The relations $(a_{i,a,b})$ follow from the defining relations for binomials
\[(H_i-n)\binom{H_i}{n}=(n+1)\binom{H_i}{n+1}, \binom{H_i}{0}=1\]
by induction. 
The relations $(b_{\alpha,k,l})$ are defining relations of divided powers.
The relation $(c_{i,a,b})$ follow from $H_i H_j=H_j H_i$.
The commutation relations $(d_{\alpha,\beta,k,l}), (e_{\alpha,k,l})$ and $(f_{\alpha,\beta,k,l})$ are listed as 
(5.11b), (5.11a) and (5.11c) in \cite{dg}.
The commutation relations $(g_{\alpha,k,a})$  and $(h_{\alpha,k,a})$ are well-known - see Lemma 26.3 D of \cite{hum}.

Using relations $(a_{i,a,b})$ through $(h_{\alpha,k,a})$, we can write every element of 
$_{\mathbb{Z}}Dist(G_m)$ as a $\mathbb{Z}$-linear combination of terms 
\begin{equation}\tag{*}\prod_{1\leq i<j\leq n} x_{\epsilon_i-\epsilon_j}^{(a_{ij})}\prod_{i=1}^n \binom{H_i}{b_i} \prod_{1\leq j<i\leq n}x_{\epsilon_i-\epsilon_j}^{(c_{ji})},
\end{equation}
where $0\leq a_{ij}, b_i,c_{ij}<q$ and the order of terms in the first and last product is fixed.
We choose the order of the terms in the first product that corresponds to the order of the roots $\alpha=\epsilon_i-\epsilon_j$ given by $\epsilon_i-\epsilon_k<\epsilon_i-\epsilon_l$ if $k<l$ and $\epsilon_i-\epsilon_n<\epsilon_{i+1}-\epsilon_{i+2}$ for 
each $i=1, \ldots, n$.
The order of the terms in the last product is chosen to correspond to the order of the roots $\alpha=\epsilon_i-\epsilon_j$ given by $\epsilon_j-\epsilon_k<\epsilon_j-\epsilon_l$ if $k>l$ and $\epsilon_j-\epsilon_1<\epsilon_{j+1}-\epsilon_{j}$ for 
each $j=1, \ldots, n$.

Since the elements $(*)$ form a $\mathbb{Z}$-basis of $_\mathbb{Z}Dist(G_m)$ by Theorem 1 of \cite{kost}, we conclude that the relations $(a_{i,a,b})$ through $(h_{\alpha,k,a})$ generate all relations amongst the generators of the $\mathbb{Z}$-module $_{\mathbb{Z}}Dist(G_m)$.

After the change of the base field, i.e., tensoring the $\mathbb{Z}$-form ${_{\mathbb{Z}}}Dist(G_m)$ with the field $K$, we obtain that all relations in 
$Dist_{K}(G_m)=K\otimes {_{\mathbb{Z}}}Dist(G_m)$ are generated by relations $(1\otimes a_{i,a,b})$ through $(1\otimes h_{\alpha,k,a})$.

There are finitely many relations $(1\otimes a_{i,a,b})$ through $(1\otimes h_{\alpha,k,a})$. However, some of them are consequences of simpler relations. 

Using Lemmas \ref{lm3.3} and \ref{lm3.4}, we derive that each $\binom{H_i}{a}$ for $0\leq a<q$ is congruent modulo $p$ to a polynomial with integral coefficients in $\binom{H_i}{p^j}$ for $0\leq j<m$.
Therefore, formulas $(1\otimes a_{i,a,b})$ imply that each $1\otimes \binom{H_i}{a}$ for $0\leq a<q$ is a polynomial with integral coefficients in variables $1\otimes \binom{H_i}{p^j}$ for $0\leq j<m$.

The relations $a!x^{(a)}=x^a$ imply that if $a=a_ip^i+a_{i-1}p^{i-1}\ldots +a_0$ is a $p$-adic expansion of $a$, then 
\[x^{(a)}=(x^{(p^i)})^{(a_i)}(x^{(p^{i-1})})^{(a_{i-1})}\ldots x^{(a_0)}=
\frac{(x^{(p^i)})^{a_i}}{a_i!}\frac{(x^{(p^{i-1})})^{a_{i-1}}}{a_{i-1}!}\ldots \frac{x^{a_0}}{a_0!}.
\] Since division by each $(a_j)!$ is well-defined 
over $K$, the commutation relations 
\[(1\otimes c_{i,j,a,b}), (1\otimes d_{\alpha, \beta,k,l}),(1\otimes e_{\alpha,k,l}),(1\otimes f_{\alpha, \beta,k,l}), (1\otimes g_{\alpha,k,a}), (1\otimes h_{\alpha,k,a})\] 
are consequences of $(1\otimes a_{i,a,b})$, $(1\otimes b_{\alpha,k,l})$ and 
\[(1\otimes c_{i,j,p^u,p^v}), (1\otimes d_{\alpha,\beta,p^u,p^v}),(1\otimes e_{\alpha,p^u,p^v}),(1\otimes f_{\alpha, \beta,p^u,p^v}), (1\otimes g_{\alpha,p^u,p^v}), (1\otimes h_{\alpha,p^u,p^v})\] for $0\leq u,v<m$.

We have proved the following lemma.

\begin{lm}\label{lm6.1}
The distribution algebra $Dist_{K}(G_m)$ of the $m$-th Frobenius kernel $G_m$ of $G=\GL(n)$ over the ground field $K$ of characteristic $p>0$ 
is generated by binomials $1\otimes \binom{H_i}{a_i}$ for $i=1, \ldots, n$ and $0\leq a_i<q=p^m$ and divided powers
$1\otimes x_{\alpha}^{(k)}$, where $\alpha\in \Phi$ and $0\leq k<q$.
The relations are generated by
$(1\otimes a_{i,a,b})$, $(1\otimes b_{\alpha,k,l})$, $(1\otimes c_{i,j,p^u,p^u})$, $(1\otimes d_{\alpha,\beta,p^u,p^v})$,$(1\otimes e_{\alpha,p^u,p^v})$, $(1\otimes f_{\alpha, \beta,p^u,p^v})$, $(1\otimes g_{\alpha,p^u,p^v})$ and $(1\otimes h_{\alpha,p^u,p^v})$ for 
$1\leq i,j\leq n$, $0\leq a,b,k,l<q$, $\alpha,\beta\in \Phi$, and $0\leq u,v<m$.
\end{lm}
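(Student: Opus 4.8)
The plan is to prove the lemma in two stages: first I would establish a complete, if redundant, presentation of the integral form ${}_{\mathbb{Z}}Dist(G_m)$ together with the full list of structural relations $(a_{i,a,b})$ through $(h_{\alpha,k,a})$, transfer this presentation to $K$ by base change, and then prune the relation set down to the prime-power-indexed relations listed in the statement.

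For the first stage I would take as generators the binomials $\binom{H_i}{k}$ and divided powers $x_{\alpha}^{(k)}$ for $0\leq k<q$, which generate ${}_{\mathbb{Z}}Dist(G_m)$ by Proposition 4.1 of \cite{hab}, and assemble the relations $(a_{i,a,b})$ through $(h_{\alpha,k,a})$ from their known sources, namely the binomial multiplication rule, the divided-power product rule, and the commutation formulas (5.11a)--(5.11c) of \cite{dg} and Lemma 26.3\,D of \cite{hum}. The essential claim is that these relations are \emph{complete}, i.e.\ that they generate every relation among the generators. To see this I would argue that repeated application of $(a_{i,a,b})$ through $(h_{\alpha,k,a})$ rewrites any monomial in the generators as a $\mathbb{Z}$-linear combination of the ordered monomials $(*)$; since Theorem 1 of \cite{kost} guarantees that the monomials $(*)$ form a $\mathbb{Z}$-basis, spanning automatically upgrades to completeness of the relations, because a relation not derivable from $(a_{i,a,b})$--$(h_{\alpha,k,a})$ would produce a nontrivial $\mathbb{Z}$-linear dependence among the $(*)$.

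Tensoring with $K$ then yields $Dist_K(G_m)=K\otimes_{\mathbb{Z}}{}_{\mathbb{Z}}Dist(G_m)$ with presentation $(1\otimes a_{i,a,b})$ through $(1\otimes h_{\alpha,k,a})$, since base change (being a right-exact cokernel construction) carries a presentation to a presentation. The final and most delicate stage is the reduction of the index set. Using Lemmas \ref{lm3.3} and \ref{lm3.4}, each $\binom{H_i}{a}$ with $0\leq a<q$ is an integral polynomial in the $\binom{H_i}{p^j}$ modulo $p$, so the relations $(1\otimes a_{i,a,b})$ let one rewrite arbitrary binomial generators in terms of the prime-power ones. Dually, the identity $a!\,x^{(a)}=x^a$ combined with the $p$-adic expansion $a=\sum_j a_j p^j$ expresses $x_{\alpha}^{(a)}$ as a product of the $x_{\alpha}^{(p^j)}$ with coefficients $1/a_j!$; because each digit satisfies $0\leq a_j<p$, the factorials $a_j!$ are units in $K$, so this rewriting is legitimate over $K$, and this is exactly where positive characteristic is used.

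Combining these two reductions, every general-index commutation relation $(1\otimes c_{i,j,a,b})$ through $(1\otimes h_{\alpha,k,a})$ becomes a consequence of its prime-power-indexed version together with $(1\otimes a_{i,a,b})$ and $(1\otimes b_{\alpha,k,l})$, which gives the announced generating set of relations. I expect the main obstacle to be the completeness step of the first stage: one must verify carefully that the rewriting procedure using $(a_{i,a,b})$--$(h_{\alpha,k,a})$ terminates and always lands in the span of $(*)$, since it is precisely the pairing of this spanning statement with Kostant's linear independence result that forces completeness rather than mere spanning. The index-reduction stage is more mechanical, though one must confirm that the polynomial coefficients supplied by Lemmas \ref{lm3.3} and \ref{lm3.4} are integral (so that nothing is lost upon reduction mod $p$) and that the inverted factorials are genuine units in $K$.
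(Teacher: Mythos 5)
Your proposal follows essentially the same route as the paper: you invoke Haboush's generation result and Kostant's $\mathbb{Z}$-basis theorem to upgrade the rewriting of monomials into the ordered forms $(*)$ to completeness of the relations $(a_{i,a,b})$--$(h_{\alpha,k,a})$ over $\mathbb{Z}$, pass to $K$ by base change, and then prune to prime-power indices exactly as the paper does, via Lemmas \ref{lm3.3} and \ref{lm3.4} for the binomials and the factorization $x^{(a)}=\prod_j (x^{(p^j)})^{a_j}/a_j!$ with the digits' factorials invertible in $K$ for the divided powers. The argument is correct and matches the paper's proof in structure and in every key ingredient.
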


In what follows, we write $(a_{i,a,b})$ instead of $(1\otimes a_{i,a,b})$ and so on for other relations.

For the integral forms of generalized $q$-Schur algebras, please see Section 8 of \cite{doty}.

\section{Presentation of $S(n,d)$ in positive characteristic case}\label{4}
In this section, we assume the characteristic of the ground field is $p>0$.
Let $m\geq 1$ and $p^m=q$. Denote by $T_m$ the $m$th Frobenius kernel of $T$. 

\begin{pr}\label{pr4.1}
Denote by $J_m$ the ideal of $Dist(T)$ generated by elements $\binom{H_i}{p^j}$ for $i=1, \ldots, n$ and $j\geq m$.
Then $Dist(T)/J_m\simeq Dist(T_m)$ has a basis consisting of elements $\prod_{i=1}^n \binom{H_i}{b_i}$ where  
$0\leq b_i<q$.
\end{pr}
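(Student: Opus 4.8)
The plan is to pin down $J_m$ exactly as a span of basis monomials of $Dist(T)$ and then read off both assertions. Recall that $Dist(T)=K[H_1,\dots,H_n]$ carries the $K$-basis $\{\prod_{i=1}^n\binom{H_i}{b_i}\mid b_i\ge 0\}$, that the $H_i$ commute, and that one-variable products multiply by Lemma \ref{lm3.2}. Write $V$ for the span of those basis monomials $\prod_i\binom{H_i}{b_i}$ with some $b_i\ge q$, and $W$ for the span of the remaining monomials (all $b_i<q$), so that $Dist(T)=V\oplus W$ as vector spaces. The heart of the argument is the identity $J_m=V$.

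For the inclusion $J_m\subseteq V$ I would note that, since $J_m$ is the ideal generated by the $\binom{H_{i_0}}{p^{j}}$ with $j\ge m$, it suffices to check that multiplying an arbitrary basis monomial by one such generator lands in $V$. The factors in coordinates $i\ne i_0$ are untouched, so I only expand $\binom{H_{i_0}}{\beta}\binom{H_{i_0}}{p^{j}}$ by Lemma \ref{lm3.2}: every resulting term is $\binom{H_{i_0}}{\beta+p^{j}-l}$ with $0\le l\le\min\{\beta,p^{j}\}$, whose index is at least $\max\{\beta,p^{j}\}\ge p^{j}\ge q$. Hence each such product is a combination of monomials whose $i_0$-th exponent is $\ge q$, giving $J_m\subseteq V$.

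For the reverse inclusion $V\subseteq J_m$ I would show that a single monomial with some exponent $b_{i_0}\ge q$ already lies in $J_m$. Expanding the $p$-adic digits $b_{i_0}=\sum_j c_jp^{j}$, Lemma \ref{lm3.3} gives $\binom{H_{i_0}}{b_{i_0}}\equiv\prod_j\binom{H_{i_0}}{c_jp^{j}}$, and since $b_{i_0}\ge p^m$ some digit $c_{j_0}\ne 0$ has $j_0\ge m$. By Lemma \ref{lm3.4} the factor $\binom{H_{i_0}}{c_{j_0}p^{j_0}}$ is a polynomial in $\binom{H_{i_0}}{p^{j_0}}$; inspection of that recursion shows this polynomial has zero constant term (the base case is $\binom{H_{i_0}}{p^{j_0}}$ and each inductive step preserves divisibility by it), so the factor is divisible by the generator $\binom{H_{i_0}}{p^{j_0}}\in J_m$. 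Thus $\binom{H_{i_0}}{b_{i_0}}\in J_m$, and multiplying by the remaining factors keeps the whole monomial in $J_m$. Combining the two inclusions yields $J_m=V$, whence $Dist(T)/J_m\cong Dist(T)/V\cong W$ has the images of $\{\prod_i\binom{H_i}{b_i}\mid 0\le b_i<q\}$ as a basis.

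It then remains to identify the quotient with $Dist(T_m)$. Here I would use the standard description (the torus part of the basis $(*)$; see Lemma \ref{lm6.1} and \cite{j}) that $Dist(T_m)$ is the subalgebra of $Dist(T)$ with $K$-basis $\{\prod_i\binom{H_i}{b_i}\mid 0\le b_i<q\}$; a short check via Lemma \ref{lm3.2} and Kummer's criterion confirms this subspace is closed under multiplication, since any term of index $\ge q$ arising from two factors of index $<q$ has a coefficient $\binom{a+b-j}{a-j}\binom{b}{j}$ that carries in base $p$ and hence vanishes. The composite $Dist(T_m)\hookrightarrow Dist(T)\twoheadrightarrow Dist(T)/J_m$ is then an algebra homomorphism carrying the distinguished basis of $Dist(T_m)$ bijectively onto the basis of $Dist(T)/J_m$ found above, so being a bijective algebra map it is the desired isomorphism. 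I expect the main obstacle to be the clean proof of $J_m=V$, in particular justifying the zero constant term in the step $V\subseteq J_m$ and ruling out any lower-index leakage in the step $J_m\subseteq V$; the final identification is then essentially bookkeeping.
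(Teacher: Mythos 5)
Your proof is correct, and while it uses the same toolkit as the paper (Lemmas \ref{lm3.2}, \ref{lm3.3}, \ref{lm3.4} and Kummer's criterion), it is organized around a point that the paper's own proof leaves implicit. The paper argues in one direction only: by Lemmas \ref{lm3.3} and \ref{lm3.4}, every $\binom{H_i}{a_i}$ is congruent modulo $J_m$ to a polynomial in the $\binom{H_i}{b_i}$ with $b_i<q$, so the monomials with all $b_i<q$ span the quotient; it then records that these monomials form a basis of $Dist(T_m)$ and concludes. That establishes surjectivity of the natural map $Dist(T_m)\to Dist(T)/J_m$ but not injectivity, i.e.\ it does not rule out that $J_m$ contains a nonzero combination of monomials with all exponents $<q$, which would make the quotient smaller than $q^n$-dimensional. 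Your identification $J_m=V$ supplies exactly this missing half: the inclusion $J_m\subseteq V$, via the observation that in Lemma \ref{lm3.2} every resulting index $b_{i_0}+p^{j}-l$ is at least $\max\{b_{i_0},p^{j}\}\geq q$, shows $J_m\cap W=0$ and hence the linear independence of the images of the claimed basis. Your converse inclusion $V\subseteq J_m$ is essentially the paper's spanning argument, with the refinement of Lemma \ref{lm3.4} (zero constant term, so the factor lies in the ideal generated by $\binom{H_{i_0}}{p^{j_0}}$) correctly justified by the recursion; an even quicker justification is to evaluate both sides at $H_{i_0}=0$. So the paper's proof buys brevity, while yours buys a complete dimension count and an exact monomial description of $J_m$.

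Two small caveats, neither fatal. First, in characteristic $p$ the algebra $Dist(T)$ is \emph{not} the polynomial algebra $K[H_1,\ldots,H_n]$ (that identification, made at the start of your proof, is the characteristic-zero statement from Section \ref{2}); it is the reduction mod $p$ of the $\mathbb{Z}$-form spanned by the binomial monomials. Your argument never actually uses the polynomial structure, only the binomial basis and the multiplication rule of Lemma \ref{lm3.2}, so this is cosmetic. Second, both your proof and the paper's use Lemmas \ref{lm3.3} and \ref{lm3.4}, which are stated as congruences mod $p$ at integer values of $H$; to read them as identities in $Dist(T)$ over $K$ one needs the standard fact that an integer-valued polynomial all of whose integer values are divisible by $p$ lies in $p\cdot{_{\mathbb{Z}}}Dist(T)$ and hence vanishes after base change to $K$.
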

\begin{proof}
The algebra $Dist(T)$ is generated by products $\prod_{i=1}^n \binom{H_i}{a_i}$ for $a_i\geq 0$. By Lemmas \ref{lm3.3} and \ref{lm3.4}, every $\binom{H_i}{a_i}$ is congruent to a polynomial in $\binom{H_i}{b_i}$ for $0\leq b_i<q$, hence an element of $Dist(T_m)$ modulo $J_m$. 
It follows from Lemma \ref{lm3.2} and Kummer's criterion that the set $\prod_{i=1}^n \binom{H_i}{b_i}$ 
for $0\leq b_i<q$ is a $K$-basis of the algebra $Dist(T_m)$. 

We conclude that $Dist(T/J_m)\simeq Dist(T_m)$.
\end{proof}

Following \cite{mz0}, for each $i=1, \ldots, n$ and $0\leq b_i <q$ we define 
\[h_{i, b_i}=\sum_{k_i=b_i}^{q-1} (-1)^{k_i-b_i}\binom{k_i}{b_i}\binom{H_i}{k_i}\]
and 
\[h_{b_1, \ldots, b_n}=\prod_{i=1}^n h_{i,b_i}.\]

\begin{lm}\label{lm4.2}
For $i=1, \ldots, n$ let $0\leq a_i,b_i<q$. Then \[h_{b_1,\ldots, b_n}(a_1, \ldots, a_n)=\prod_{i=1}^n \delta_{a_i,b_i}.\]
Consequently, the elements $h_{b_1, \ldots, b_n}$, where $0\leq b_i <q$ for $i=1, \ldots, n$ form a complete set of primitive orthogonal idempotents that add up to the unity of $Dist(T_m)$, and 
$Dist(T_m)=\oplus_{\stackrel{0\leq b_i<q}{i=1, \ldots n}} Kh_{b_1, \ldots, b_n}$.
\end{lm}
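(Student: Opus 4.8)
The plan is to prove the pointwise evaluation formula first, and then to bootstrap the idempotent statement from it by realizing $Dist(T_m)$ as an algebra of $K$-valued functions on the index set $\{0,\dots,q-1\}^n$.

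First I would reduce the displayed identity to a single variable. Since $h_{b_1,\dots,b_n}=\prod_{i=1}^n h_{i,b_i}$ and each factor $h_{i,b_i}$ is a polynomial in $H_i$ alone, substituting $(a_1,\dots,a_n)$ factors as $\prod_i h_{i,b_i}(a_i)$, so it suffices to prove $h_{i,b}(a)=\delta_{a,b}$ for $0\le a,b<q$. Writing $h_{i,b}(a)=\sum_{k=b}^{q-1}(-1)^{k-b}\binom{k}{b}\binom{a}{k}$ and applying the standard subset-of-a-subset identity $\binom{a}{k}\binom{k}{b}=\binom{a}{b}\binom{a-b}{k-b}$, then reindexing by $j=k-b$, gives $h_{i,b}(a)=\binom{a}{b}\sum_{j=0}^{q-1-b}(-1)^j\binom{a-b}{j}$. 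If $a<b$ the prefactor $\binom{a}{b}$ vanishes; if $a\ge b$ then $a-b\le q-1-b$, so the alternating sum truncates to $\sum_{j=0}^{a-b}(-1)^j\binom{a-b}{j}=(1-1)^{a-b}$, which is $1$ exactly when $a=b$. All of this is an identity of integers, hence survives reduction modulo $p$, and yields $h_{i,b}(a)=\delta_{a,b}$.

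Next I would turn the pointwise statement into the idempotent statement. The key observation is that each evaluation $\chi_a\colon\binom{H_i}{k}\mapsto\binom{a_i}{k}$ extends to an algebra homomorphism $Dist(T)\to K$, because the multiplication rule of Lemma \ref{lm3.2} is a polynomial identity in $H$ and therefore holds after substituting any integer. Since $\binom{a_i}{p^j}=0$ for $j\ge m$ (as $0\le a_i<q=p^m$), the map $\chi_a$ kills the ideal $J_m$ and descends to $\chi_a\colon Dist(T_m)\to K$. Assembling these over all $a\in\{0,\dots,q-1\}^n$ produces an algebra homomorphism $\Phi\colon Dist(T_m)\to K^{\,q^n}$, $x\mapsto(\chi_a(x))_a$. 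By the first part, $\Phi(h_{b_1,\dots,b_n})$ is the standard basis vector $e_b$ of $K^{\,q^n}$. As these $e_b$ span the target and $\dim Dist(T_m)=q^n$ by Proposition \ref{pr4.1}, the map $\Phi$ is an isomorphism of algebras.

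Finally, transporting the structure back through $\Phi^{-1}$ gives everything at once: the $e_b$ are a complete set of orthogonal idempotents of $K^{\,q^n}$ summing to $1$, and they are primitive because $K^{\,q^n}$ is a product of copies of the field $K$; hence the $h_{b_1,\dots,b_n}$ are a complete set of primitive orthogonal idempotents of $Dist(T_m)$ adding up to the unity, and $Dist(T_m)=\bigoplus_{0\le b_i<q}Kh_{b_1,\dots,b_n}$. The main obstacle is the middle step: one must verify that evaluation is genuinely an algebra homomorphism, which is exactly where Lemma \ref{lm3.2}, read as a polynomial identity valid at all integer arguments, does the real work, and one must invoke the dimension count of Proposition \ref{pr4.1} to promote the pointwise equalities into the full algebra isomorphism.
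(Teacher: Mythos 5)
Your proof is correct and takes essentially the same route as the paper: the identity $\binom{a}{k}\binom{k}{b}=\binom{a}{b}\binom{a-b}{k-b}$ plus the truncated alternating sum gives $h_{i,b}(a)=\delta_{a,b}$, exactly as in the paper's case analysis. For the second assertion the paper only says it ``follows from'' the evaluation formula; your evaluation homomorphism $\Phi$ into $K^{q^n}$ combined with the dimension count of Proposition \ref{pr4.1} is precisely the implicit argument, written out in full.
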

\begin{proof}
It is enough to show that $h_{i,b_i}(a_i)=\delta_{a_i,b_i}$.
If $a_i<b_i$, then $\binom{a_i}{b_i}=0$ and $h_{i,b_i}(a_i)=0$. 
If $a_i=b_i$, then $h_{i,b_i}(a_i)=1$.

Assume $a_i>b_i$. In this case $\binom{k_i}{b_i}\binom{a_i}{k_i}=\binom{a_i}{b_i}\binom{a_i-b_i}{k_i-b_i}$
and 
\[\begin{aligned}h_{i, b_i}(a_i)&=\sum_{k_i=b_i}^{a_i}(-1)^{k_i-b_i}\binom{k_i}{b_i}\binom{a_i}{k_i}
=\sum_{k_i=b_i}^{a_i} (-1)^{k_i-b_i}\binom{a_i}{b_i}\binom{a_i-b_i}{k_i-b_i}\\
&=\binom{a_i}{b_i}\sum_{k_i=b_i}^{a_i}(-1)^{k_i-b_i}\binom{a_i-b_i}{k_i-b_i}=
\binom{a_i}{b_i}\sum_{t=0}^{a_i-b_i}(-1)^{t}\binom{a_i-b_i}{t}=0.
\end{aligned}\]
The second statement follows from $h_{b_1,\ldots, b_n}(a_1, \ldots, a_n)=\prod_{i=1}^n \delta_{a_i,b_i}$.
\end{proof}

Fix an integer $d>0$ and $q=p^m$ such that $d<q$.

Denote by $I_0=I_0(n,d)$ the ideal of $Dist(T)$ generated by $J_m$ and by $\binom{H_1+\ldots + H_n-d}{p^j}$ for all $j\geq 0$.

Denote \[F_1=F_1(n,d)=Dist(T)/J_m\simeq Dist(T_m)\] and \[F_0=F_0(n,d)=Dist(T)/I_0.\]

Further, denote by $I=I(n,d)$ the ideal of $Dist(G)$ generated by $I_0$.

\begin{lm}\label{lm4.3}
Assume $0<d<q$. Then the images of $H_i$ of $Dist(T)$ under the map $\rho_{d}$ satisfy 
\[\binom{\rho_d(H_i)}{p^k}\equiv 0 \pmod p \text{ for all } i=1, \ldots, n \text{ and } k\geq m, and \]
\[\binom{\rho_d(H_1+\ldots + H_n)-d}{p^j}\equiv 0 \pmod p \text{ for all } j\geq 0.\]
\end{lm}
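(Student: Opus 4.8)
The plan is to follow the strategy of the characteristic-zero Lemma \ref{lm2.1}, replacing the product formula for $\prod_k (H_S+k)$ by the binomial criteria of Section \ref{3}. The key observation is that $\rho_d(H_i)$ is a diagonalizable operator on $E^{\otimes d}$ whose eigenvalue on the weight space of weight $\lambda$ is the $i$-th coordinate $\lambda_i$, and that $\binom{\rho_d(H_i)}{p^k}$ acts on that weight space by the scalar $\binom{\lambda_i}{p^k}$ (here I use that $\rho_d$ is an algebra morphism, so it sends $\binom{H_i}{p^k}$ to $\binom{\rho_d(H_i)}{p^k}$). Thus proving $\binom{\rho_d(H_i)}{p^k}\equiv 0 \pmod p$ reduces to showing that every such scalar vanishes in $K$, that is, $\binom{\lambda_i}{p^k}\equiv 0 \pmod p$ for every weight $\lambda$ occurring in $E^{\otimes d}$.

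First I would record the weights of $E^{\otimes d}$. Since $E=K^n$ carries the weights $\epsilon_1,\ldots,\epsilon_n$, the weights of $E^{\otimes d}$ are exactly the compositions $\lambda=(\lambda_1,\ldots,\lambda_n)$ of $d$ into $n$ nonnegative parts, so $0\leq \lambda_i\leq d$ and $\sum_i \lambda_i = d$. In particular the hypothesis $d<q=p^m$ gives $0\leq \lambda_i \leq d < p^m$ for every $i$ and every weight $\lambda$. Applying Lemma \ref{lm3.5} with $l=m$ and $a=\lambda_i$ yields $\binom{\lambda_i}{p^k}\equiv 0 \pmod p$ for all $k\geq m$. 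As these are precisely the scalars by which $\binom{\rho_d(H_i)}{p^k}$ acts on the weight spaces, and $K$ has characteristic $p$, the operator $\binom{\rho_d(H_i)}{p^k}$ is zero, which establishes the first family of relations.

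For the second relation I would use that $\rho_d(H_1+\ldots+H_n)$ acts on the weight space of weight $\lambda$ by the scalar $\lambda_1+\ldots+\lambda_n = d$, uniformly over all weights $\lambda$ of $E^{\otimes d}$. Hence $\rho_d(H_1+\ldots+H_n)-d$ acts as the scalar $0$ on every weight space, so $\binom{\rho_d(H_1+\ldots+H_n)-d}{p^j}$ acts as $\binom{0}{p^j}$; this vanishes for every $j\geq 0$ because $p^j>0$ (equivalently, by Corollary \ref{cor1} with $a=0$). Therefore the operator is zero modulo $p$, giving the second relation.

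The computation is genuinely routine, and the proof parallels that of Lemma \ref{lm2.1}. The only point requiring care is the precise identification of the weight set of $E^{\otimes d}$ together with the bound $\lambda_i\leq d$, since it is exactly this bound, combined with $d<q$, that allows Lemma \ref{lm3.5} to apply; no estimate deeper than the elementary binomial facts of Section \ref{3} is needed.
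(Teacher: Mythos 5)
Your proposal is correct and follows essentially the same route as the paper: characterize the weights of $E^{\otimes d}$ as compositions of $d$ (so $0\leq\lambda_i\leq d<q$ and $\sum_i\lambda_i=d$), then apply Lemma \ref{lm3.5} for the first family of relations and Corollary \ref{cor1} for the second. The only difference is that you spell out explicitly the diagonal action of $\binom{\rho_d(H_i)}{p^k}$ on weight spaces, which the paper leaves implicit.
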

\begin{proof}
The set $\Lambda(n,d)$ of weights of $E_K^d$ is characterized by the conditions 
$0\leq \lambda_i\leq d$ for each $i=1, \ldots, n$ and $\lambda_1+\ldots+\lambda_n=d$.
Since $d<q$, these conditions are equivalent to $0\leq \lambda_i<q$ for each $i=1, \ldots, n$ and $\lambda_1+\ldots+\lambda_n=d$.

Using Corollary \ref{cor1} we infer that $\lambda_1+\ldots+ \lambda_n=d$ implies 
\[\binom{\rho_d(H_1+\ldots + H_n)-d}{p^j}\equiv 0 \pmod p \text{ for all } j\geq 0.\]
By Lemma \ref{lm3.5}, the conditions $0\leq \lambda_i<q$ imply 
\[\binom{\rho_d(H_i)}{p^k}\equiv 0 \pmod p \text{ for all } i=1, \ldots, n \text{ and } k\geq m.\]
\end{proof}

\begin{pr}\label{pr4.4}
Let $0<d<q$. 
Then $S_0(n,d)\simeq F_0(n,d)$.
\end{pr}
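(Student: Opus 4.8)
The plan is to mirror the argument of Proposition \ref{pr2.2}, replacing the Chinese Remainder decomposition used there with the idempotent decomposition of $Dist(T_m)$ furnished by Lemma \ref{lm4.2}. The strategy has three parts: determine the structure of $F_0(n,d)$ and compute its dimension, produce a surjection $F_0(n,d)\to S_0(n,d)$ from Lemma \ref{lm4.3}, and match dimensions to force this surjection to be an isomorphism.

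First I would analyze $F_0=Dist(T)/I_0$ through the intermediate quotient $F_1=Dist(T)/J_m\simeq Dist(T_m)$ of Proposition \ref{pr4.1}. By Lemma \ref{lm4.2}, $F_1=\oplus_{0\leq b_i<q}Kh_{b_1,\ldots,b_n}$ is semisimple, the elements $h_{b_1,\ldots,b_n}$ forming a complete set of primitive orthogonal idempotents indexed by tuples $(b_1,\ldots,b_n)$ with $0\leq b_i<q$; passing to $F_0$ one only further quotients by the images of the generators $\binom{H_1+\ldots+H_n-d}{p^j}$. Evaluating such a generator on $h_{b_1,\ldots,b_n}$ via $h_{b_1,\ldots,b_n}(a_1,\ldots,a_n)=\prod_i\delta_{a_i,b_i}$ gives $\binom{\sum_i b_i-d}{p^j}\pmod p$, so by Corollary \ref{cor1} the idempotent is annihilated precisely when $\sum_i b_i\neq d$ and survives precisely when $\sum_i b_i=d$. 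Since $d<q$ forces the conditions $0\leq b_i<q$ and $\sum_i b_i=d$ to describe exactly $\Lambda(n,d)$, as already noted in the proof of Lemma \ref{lm4.3}, the surviving idempotents are indexed by $\Lambda(n,d)$ and hence $F_0\simeq\oplus_{\lambda\in\Lambda(n,d)}Kh_{\lambda_1,\ldots,\lambda_n}\simeq K^{|\Lambda(n,d)|}$ is semisimple of dimension $|\Lambda(n,d)|$.

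Next, Lemma \ref{lm4.3} shows that the operators $\rho_d(H_i)$ satisfy exactly the defining relations generating $I_0$, so the restriction of $\rho_d$ to $Dist(T)$ factors through $F_0$ and yields a surjection $\pi\colon F_0\to S_0(n,d)=\rho_d(Dist(T))$. In particular $\dim S_0(n,d)\leq\dim F_0=|\Lambda(n,d)|$. For the reverse inequality I would observe that, for each $\lambda\in\Lambda(n,d)$, the image $\pi(h_{\lambda_1,\ldots,\lambda_n})=\rho_d(h_{\lambda_1,\ldots,\lambda_n})$ is the projection of $E^{\otimes d}$ onto its $\lambda$-weight space, since $Dist(T)$ acts diagonally and $h_{\lambda_1,\ldots,\lambda_n}$ evaluates to $\prod_i\delta_{a_i,\lambda_i}$ on the weight $a$; these weight spaces are nonzero by the very definition of $\Lambda(n,d)$, so the projections are nonzero pairwise orthogonal idempotents and therefore linearly independent in $S_0(n,d)$. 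This gives $\dim S_0(n,d)\geq|\Lambda(n,d)|$, whence $\dim S_0(n,d)=|\Lambda(n,d)|=\dim F_0$ and the surjection $\pi$ is an isomorphism.

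The idempotent bookkeeping and the two dimension inequalities are routine; the only point requiring real care is the identification of the surviving idempotents, namely verifying that the family $\binom{H_1+\ldots+H_n-d}{p^j}$ cuts out the weights with $\sum_i b_i=d$ and nothing more. This is where the hypothesis $d<q$ and Corollary \ref{cor1} are essential: without $d<q$ the indexing set of idempotents of $Dist(T_m)$ would not coincide with $\Lambda(n,d)$, and without the full family over all $j\geq 0$ together with Corollary \ref{cor1} one could not conclude that the sum condition is detected sharply.
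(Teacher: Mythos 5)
Your proof is correct, and its skeleton is the same as the paper's: decompose $F_1=Dist(T)/J_m$ via the idempotents $h_{b_1,\ldots,b_n}$ of Lemma \ref{lm4.2}, use Corollary \ref{cor1} to identify which idempotents are killed in $F_0$, use Lemma \ref{lm4.3} to factor $\rho_d|_{Dist(T)}$ through $F_0$, and then match dimensions. The genuine difference is in how you obtain the lower bound $dim\, S_0(n,d)\geq |\Lambda(n,d)|$. The paper gets it from the duality $S_0(n,d)=(A(n,d)\cap T)^*$, counting the basis of diagonal monomials $\prod_i c_{ii}^{\lambda_i}$ of $A(n,d)\cap T$; you instead exhibit the images $\rho_d(h_{\lambda_1,\ldots,\lambda_n})$, $\lambda\in \Lambda(n,d)$, as the projections of $E^{\otimes d}$ onto its weight spaces, which are nonzero pairwise orthogonal idempotents and hence linearly independent in $S_0(n,d)$. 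Your route is more self-contained: it works directly from the definition $S_0(n,d)=\rho_d(Dist(T))$ and only needs Lemma \ref{lm4.2} applied to the weights of $E^{\otimes d}$ (legitimate because all such weights have entries in $[0,q)$ when $d<q$), whereas the paper's route leans on the coalgebra description of $S_0(n,d)$, which it states without proof. A second, minor difference: you pin down $dim\, F_0=|\Lambda(n,d)|$ exactly, using the fact that in the split semisimple algebra $\oplus_b Kh_b$ the ideal generated by given elements is the span of those idempotents occurring with nonzero coefficient in some generator; the paper records only the inequality $dim\, F_0\leq |\Lambda(n,d)|$ and recovers equality at the end from the chain of surjections. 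Both versions are sound; yours front-loads the computation of $F_0$ and dispenses with the intermediate quotients $Dist(T)/(I\cap Dist(T))$ and $R_0$ that the paper threads through its surjection (and which it reuses later when identifying $I_0(n,d)$ with $I_T(\Lambda^+(n,d))$ in Proposition \ref{pr4.6}).
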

\begin{proof}
Recall that $J_m$ is the ideal of $Dist(T)$ generated by elements $\binom{H_i}{p^k}$ for all $i=1, \ldots, n$ and $k\geq m$.

By Lemma \ref{lm4.2}, we have $F_1=\oplus_{\stackrel{0\leq b_i<q}{i=1, \ldots n}} K h_{b_1, \ldots, b_n}=K^{q^n}$ is a semisimple algebra.
Therefore, $F_0$ is a semisimple algebra as well.

If $\lambda$ is such that $0\leq \lambda_i<q$ for $i=1, \ldots, n$ but $\lambda \notin \Lambda(n,d)$, then by 
Corollary \ref{cor1}, we have $\binom{\lambda_1+\ldots +\lambda_n -d}{p^j}\not\equiv 0 \pmod p$ for some $j\geq 0$.
 In this case, $h_{\lambda_1, \ldots, \lambda_n}\in I_0$.
Therefore, $F_0$ is a subalgebra of $\oplus _{\lambda\in \Lambda(n,d)} K(h_{\lambda_1, \ldots, \lambda_n}+I_0)$,
showing that $dim(F_0)\leq |\Lambda(n,d)|$.

Denote $I=I(n,d)$ and $R=Dist(G)/I$. The canonical quotient map $Dist(T)\to R$ induces by restriction to $Dist(T)$  a map
$Dist(T)\to R$. The image of this map is denoted by $R_0$, and its kernel is $I\cap Dist(T)$. Therefore, $R_0 \simeq Dist(T)/(I\cap Dist(T))$.

By Lemma \ref{lm4.3}, there is a surjective map $R_0\to S_0(n,d)$ which is a part of the sequence of algebra surjections
\[F_0=Dist(T)/I_0 \to Dist(T)/(I\cap Dist(T)) \to R_0 \to S_0(n,d).\]
The first surjection is given by inclusion $I_0\subset I\cap Dist(T)$.

We have $S_0(n,d)=(A(n,d)\cap T)^*$ and $dim(S_0(n,d))=dim(A(n,d)\cap T)$.
The space $A(n,d)\cap T$ has a basis consisting of elements $\prod_{i=1}^n c_{ii}^{\lambda_i}\in E_K^{d}$ such that 
$0\leq \lambda_i\leq d$ for each $i=1, \ldots, n$ and $\lambda_1+ \ldots + \lambda_n = d$.
Therefore
$dim(S_0(n,d))=|\Lambda(n,d)|\leq dim(F_0)$. Since $dim(F_0)\leq |\Lambda(n,d)|$, we conclude that the above surjections are algebra isomorphisms.
\end{proof}

The ideal $I_0$ of $Dist(T)$ is generated by infinitely many generators. The following lemma asserts that $I_0$ is not finitely generated. 

\begin{lm}\label{lm4.5}
Assume an ideal $J$ of $Dist(T)$ is finitely generated. Then the factoralgebra $Dist(T)/J$ is infinite-dimensional. 
\end{lm}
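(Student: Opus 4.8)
The plan is to exploit the layered semisimple structure of $Dist(T)$ established in Proposition \ref{pr4.1} and Lemma \ref{lm4.2}. First I would record that for each $m$ the span $W_m$ of the basis elements $\prod_{i=1}^n \binom{H_i}{b_i}$ with $0\le b_i<p^m$ is a subalgebra of $Dist(T)$ isomorphic to $Dist(T_m)\cong K^{p^{mn}}$, carrying the complete set of primitive orthogonal idempotents $\{h_{b_1,\ldots,b_n}\}$ of Lemma \ref{lm4.2}; that these subalgebras are nested, $W_m\subseteq W_{m+1}$; and that $Dist(T)=\bigcup_m W_m$. In other words $Dist(T)$ is a directed union of finite-dimensional commutative semisimple algebras (it is commutative von Neumann regular), and this is the fact that drives the whole argument.

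Next, given a finitely generated ideal $J=(g_1,\ldots,g_k)$, I would pick $m$ with all $g_\ell\in W_m$. Inside the semisimple algebra $W_m\cong K^{p^{mn}}$ the ideal generated by the $g_\ell$ is generated by one idempotent $e=\sum_{(b)\in S}h_{b_1,\ldots,b_n}$, where $S$ is the union of the supports of the $g_\ell$ in the coordinate decomposition. A two-inclusion argument then transfers this to $Dist(T)$: from $g_\ell=eg_\ell$ we get $J\subseteq (e)$, and from $e$ lying in the $W_m$-ideal generated by the $g_\ell$ we get $e\in J$; hence $J=(e)$ and $Dist(T)/J\cong (1-e)Dist(T)$. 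We may assume $J$ is proper, since otherwise $Dist(T)/J=0$; thus $e\neq 1$ and $1-e$ is a nonzero sum of the idempotents $h_{(b)}$ of $W_m$.

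The crux is to show that a single primitive idempotent $h_{(b)}$ of $W_m$ ceases to be primitive in $Dist(T)$. For a weight $\lambda\in\mathbb{Z}^n$ let $\mathrm{ev}_\lambda\colon Dist(T)\to K$ be the character $\binom{H_i}{c}\mapsto\binom{\lambda_i}{c}$. Using the evaluation computed in Lemma \ref{lm4.2} together with Lucas' congruence (Lemma \ref{lm3.1}), I would show that $\mathrm{ev}_\lambda(h_{(b)})=1$ when $\lambda_i\equiv b_i\pmod{p^m}$ for all $i$, and $0$ otherwise. Since the coordinate of an element of $W_{m+1}$ at a primitive idempotent $h^{(m+1)}_{(c)}$ is read off by the corresponding character, comparing values yields the refinement $h^{(m)}_{(b)}=\sum_{(c)}h^{(m+1)}_{(c)}$, the sum ranging over the $p^n$ tuples $(c)$ with $c_i\equiv b_i\pmod{p^m}$. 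Iterating, $h_{(b)}$ dominates $p^{n(m'-m)}$ pairwise orthogonal nonzero idempotents of $W_{m'}$ for every $m'>m$.

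Finally I would choose one $h_{(b_0)}$ with $(1-e)h_{(b_0)}=h_{(b_0)}$, which exists because $e\neq 1$, so that $h_{(b_0)}Dist(T)\subseteq (1-e)Dist(T)$. By the refinement this subalgebra contains arbitrarily large families of pairwise orthogonal nonzero idempotents, and orthogonal nonzero idempotents are $K$-linearly independent; hence $Dist(T)/J\cong (1-e)Dist(T)$ is infinite-dimensional. I expect the refinement step to be the main obstacle: one must verify carefully that an idempotent defined at level $m$ really splits at level $m+1$, and the safest route is the character computation above rather than a direct wrestle with the product formulas of Lemmas \ref{lm3.2}--\ref{lm3.4}.
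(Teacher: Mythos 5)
Your argument is correct, but it is genuinely different from the one in the paper --- and, importantly, it succeeds where the paper's own proof breaks down. The paper argues in one stroke: every generator of $J$ lies in the finite-dimensional subalgebra $Dist(T_k)$ for some $k$, it then asserts (citing Lemma \ref{lm3.2} and Kummer's criterion) that $Dist(T_k)$ is an \emph{ideal} of $Dist(T)$, so that $J\subseteq Dist(T_k)$ and $Dist(T)/J$ surjects onto the infinite-dimensional $Dist(T)/Dist(T_k)$. But Lemma \ref{lm3.2} and Kummer's criterion only show that $Dist(T_k)$ is a unital \emph{subalgebra}: it contains $1=\binom{H_i}{0}$, so it cannot be a proper ideal, and the containment $J\subseteq Dist(T_k)$ fails in general --- for $n=1$, $p=2$, $k=1$, the ideal generated by $H=\binom{H}{1}$ contains $H\binom{H}{2}=3\binom{H}{3}+2\binom{H}{2}\equiv\binom{H}{3}\pmod 2$, which lies outside $Dist(T_1)=\mathrm{span}\{1,H\}$. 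Your route --- replace $J$ by the principal ideal $(e)$ of an idempotent $e\in W_m$ via the two-inclusion argument, identify $Dist(T)/J$ with $(1-e)Dist(T)$, and exhibit arbitrarily many orthogonal idempotents under $1-e$ through the refinement $h^{(m)}_{(b)}=\sum_{(c)\equiv (b)\bmod p^m} h^{(m+1)}_{(c)}$ --- avoids the false step entirely, and the character computation you flag as the crux does go through: for $c<p^m$ the value $\binom{\lambda}{c}\bmod p$ depends only on $\lambda \bmod p^m$, by Lemma \ref{lm3.1} for $\lambda\geq 0$, or for arbitrary $\lambda\in\mathbb{Z}$ by Lemma \ref{lm3.7} together with $\binom{p^m}{j}\equiv 0\pmod p$ for $0<j<p^m$. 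So what the paper's approach buys is brevity, but at the cost of a fatal error; what yours buys is an actual proof, in effect a correction that the paper needs (its conclusion, that $I_0$ is not finitely generated, survives because $F_0\simeq S_0(n,d)$ is a nonzero finite-dimensional quotient). One caveat you should state more carefully: the lemma as written is false for $J=Dist(T)$, whose quotient is $0$, so properness of $J$ must be added as a hypothesis rather than ``assumed''; this is a defect of the paper's statement as much as of your phrasing, and it is harmless in the application since $I_0$ is proper.
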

\begin{proof}
Each generator of $J$ is a finite linear combination of products of expressions $\binom{H_i}{a}$.
Let $t$ be the maximal number appearing in any such binomial $\binom{H_i}{a}$ and $t<p^k$ for some $k$. Then each generator of $J$ belongs to $Dist(T_k)$, and $J\subset Dist(T_k)$. 
Lemma \ref{lm3.2} together with Kummer's criterion imply that $Dist(T_r)$ is an ideal of $Dist(T)$. 

Since $Dist(T)/Dist(T_k)$ is infinite-dimensional and it is a surjective image of $Dist(T)/J$, we conclude that
$Dist(T)/J$ is also infinite-dimensional.
\end{proof}

\begin{pr}\label{pr4.6}
Assume $d<q=p^m$. Then the Schur algebra $S(n,d)$ is a factoralgebra of $Dist(G)$ modulo the ideal $I(n,d)$.
It is also a factoralgebra of $Dist(G_m)$ of the $m$th Frobenius kernel of $G=\GL(n)$ by the ideal generated by relations $\binom{H_1+\ldots +H_n-d}{p^j}=0$ for
$0\leq j<m+t$, where $t=[\log_p(n)]$.
\end{pr}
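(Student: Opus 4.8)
The plan is to prove the two assertions separately: the first by copying the argument of Proposition \ref{pr2.3} into positive characteristic, the second by descending that presentation from $Dist(G)$ to the subalgebra $Dist(G_m)$, where the point is to show that only finitely many of the central relations survive. For the equality $S(n,d)=Dist(G)/I(n,d)$ I would run the proof of Proposition \ref{pr2.3} essentially verbatim. By $1.3.(4)$ of \cite{d} one has $S(n,d)=S(\Lambda^+(n,d))=Dist(G)/I(\Lambda^+(n,d))$, and by Proposition \ref{pr1.2} the ideal $I(\Lambda^+(n,d))$ is generated by its intersection with $Dist(T)$, whence $S_0(n,d)\simeq Dist(T)/(I(\Lambda^+(n,d))\cap Dist(T))$. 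Proposition \ref{pr4.4} identifies $S_0(n,d)$ with $F_0=Dist(T)/I_0(n,d)$, and the resulting surjection $F_0\twoheadrightarrow S_0(n,d)$ together with the equality of dimensions forces $I_0(n,d)=I(\Lambda^+(n,d))\cap Dist(T)$, hence $I(n,d)=I(\Lambda^+(n,d))$.

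For the $Dist(G_m)$ statement I would first record two facts. Since $\sum_i H_i$ is shifted by $0$ under every $x_\alpha^{(k)}$ (relations $(g_{\alpha,k,a})$, $(h_{\alpha,k,a})$), the element $Z=H_1+\dots+H_n$ is central in $Dist(G)$, so each $\binom{Z-d}{p^j}$ and its image in $Dist(T_m)$ is central. Moreover, because $d<q$, every $x_\alpha^{(k)}$ and every $\binom{H_i}{k}$ with $k\ge q$ annihilates $E^{\otimes d}$: the weights $\lambda\in\Lambda(n,d)$ satisfy $0\le\lambda_i\le d<q$, so such operators move every weight vector out of range. Hence $\rho_d$ carries the generators of $Dist(G_m)$ listed in Lemma \ref{lm6.1} onto a generating set of $S(n,d)$, so that $\rho_d\colon Dist(G_m)\to S(n,d)$ is surjective with kernel $I(n,d)\cap Dist(G_m)$; the generators $\binom{Z-d}{p^j}$ lie in this kernel by Lemma \ref{lm4.3}.

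The combinatorial heart is to show that only the relations with $0\le j<m+t$ are needed. In $Dist(T_m)=\bigoplus_{0\le\lambda_i<q}Kh_{\lambda_1,\dots,\lambda_n}$ (Lemma \ref{lm4.2}) the $\lambda$-component of $\binom{Z-d}{p^j}$ is $\binom{a(\lambda)}{p^j}$ with $a(\lambda)=\sum_i\lambda_i-d$, so by Corollary \ref{cor1} the ideal generated by all of them is the span of the $h_\lambda$ with $a(\lambda)\neq0$. Using $p^t\ge n$ one gets $0\le a(\lambda)\le n(q-1)-d<p^{m+t}$ when $a(\lambda)\ge0$, and $0<-a(\lambda)\le d<p^m$ when $a(\lambda)<0$; then Lemmas \ref{lm3.1} and \ref{lm3.5} produce, for each $\lambda$ with $a(\lambda)\neq0$, some $j$ with $0\le j<m+t$ and $\binom{a(\lambda)}{p^j}\not\equiv0$ (a nonzero $p$-adic digit below position $m+t$ in the positive case, the value $j=m$ in the negative case). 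Thus the finitely many $\binom{Z-d}{p^j}$, $0\le j<m+t$, generate the same torus ideal as all of them, which by Proposition \ref{pr4.4} is exactly $\ker(\rho_d|_{Dist(T_m)})=I(n,d)\cap Dist(T_m)$.

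It then remains to descend to $Dist(G_m)$. Because the $\binom{Z-d}{p^j}$ are central, the ideal $K_m$ they generate in $Dist(G_m)$ is homogeneous for the root-lattice grading, so $K_m\cap Dist(T_m)$ equals the torus ideal just computed, namely $I(n,d)\cap Dist(T_m)$. To conclude $K_m=I(n,d)\cap Dist(G_m)$ I would then show that the kernel of $\rho_d|_{Dist(G_m)}$ is generated by its intersection with $Dist(T_m)$ — the analogue of Proposition \ref{pr1.2} for the Frobenius kernel; granting this, the kernel is generated by $I(n,d)\cap Dist(T_m)=K_m\cap Dist(T_m)$ and hence equals $K_m$, giving $Dist(G_m)/K_m\simeq S(n,d)$. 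I expect this reduction-to-the-torus for $G_m$ to be the main obstacle, rather than the binomial combinatorics: Proposition \ref{pr1.2} is quoted only for $G$, and adapting Donkin's generation argument to $Dist(G_m)$ is where the real work lies, with the centrality of $Z$ being exactly what keeps the two-sided ideal controlled by its degree-zero part.
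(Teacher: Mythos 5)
Your first paragraph and your ``combinatorial heart'' are essentially the paper's own proof. The identification $S(n,d)=Dist(G)/I(n,d)$ is argued exactly as in the paper (via 1.3(4) of Donkin, Proposition \ref{pr1.2} and Proposition \ref{pr4.4}), and your finiteness argument for the torus ideal is the paper's one-line redundancy claim carried out more carefully: the paper only records $0\le\lambda_1+\cdots+\lambda_n<np^m\le p^{m+t}$ and silently skips the case $\sum_i\lambda_i<d$, which you treat. Two small repairs are common to you and the paper: the inequality $np^m\le p^{m+t}$ forces $t=\lceil\log_p n\rceil$ rather than the floor, and your witness $j=m$ in the negative case needs $t\ge1$, which the ceiling supplies for $n\ge2$.

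The genuine gap is the descent to $Dist(G_m)$, and you have put your finger on exactly the step about which the paper's own proof says nothing: no argument is given (there or by you) that converts the presentation over $Dist(G)$ into one over the subalgebra $Dist(G_m)$. Unfortunately the lemma you propose to supply --- that $\ker(\rho_d|_{Dist(G_m)})$ is generated by its intersection with $Dist(T_m)$ --- is not just unproven but false, so the route cannot be completed. First, a symptom: writing $Z=H_1+\cdots+H_n$, your centrality claim fails for $m\le j<m+t$, because the image $c_j$ of $\binom{Z-d}{p^j}$ in $Dist(T_m)\simeq Dist(T)/J_m$ has eigenvalue $\binom{|\lambda|-d}{p^j}$ on the idempotent $h_{\lambda_1,\ldots,\lambda_n}$ of Lemma \ref{lm4.2}, computed from the representative $\lambda\in[0,q)^n$; conjugation by $x_\alpha^{(k)}$ shifts representatives modulo $q$, changing $|\lambda|$ by multiples of $q=p^m$, which $\binom{\,\cdot\,}{p^j}$ detects once $j\ge m$. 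Second, the fatal point: take $n=2$, $p=3$, $m=1$, $d=2=q-1$, and let $M$ be the restriction to $Dist(G_1)$ of $E\otimes D^{\otimes2}$, where $D$ is the determinant representation. Its $T$-weights are $(3,2)$ and $(2,3)$, so its $T_1$-weights have canonical representatives $(0,2)$ and $(2,0)$, both of coordinate sum $d$. Hence every $h_\nu$ with $|\nu|\ne d$ kills $M$, and every $c_j$, $j\ge0$, acts on each weight space by $\binom{0}{3^j}=0$; consequently the two-sided ideal of $Dist(G_1)$ generated by $\ker(\rho_d)\cap Dist(T_1)$ (a fortiori your $K_m$) annihilates $M$. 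If that ideal equaled $\ker(\rho_d|_{Dist(G_1)})$, then $M$ would be a module over $Dist(G_1)/\ker(\rho_d|_{Dist(G_1)})\simeq S(2,2)$, so its $Dist(G_1)$-composition factors would lie among $L(2,0)|_{G_1}$ (dimension $3$) and $L(1,1)|_{G_1}$ (dimension $1$, sole $T_1$-weight $\overline{(1,1)}$); since $\dim M=2$, both factors would be the latter, forcing all $T_1$-weights of $M$ to be $\overline{(1,1)}$, a contradiction. So the kernel is strictly larger than the ideal generated by its torus part, and the analogue of Proposition \ref{pr1.2} for Frobenius kernels fails. The root cause is wrap-around modulo $q$: relations lying in $Dist(T_m)$ see weights only modulo $q$, whereas the constraint $0\le\lambda_i<q$ invoked in the paper's proof comes from the generators $\binom{H_i}{p^j}$, $j\ge m$, of $I(n,d)$, which live in $Dist(T)$ but not in $Dist(G_m)$. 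Thus your proposal does not prove the second assertion, and no argument that generates the kernel by torus elements of $Dist(G_m)$ can; under the only available reading of the relations for $j\ge m$ (as their images in $Dist(T_m)$), this difficulty is one the paper's own two-sentence argument also fails to confront.
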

\begin{proof}
By 1.3. (4) of \cite{d}, $S(n,d)= S(\Lambda^+(n,d))=Dist(G)/(I(\Lambda^+(n,d))$.
By Proposition 1.2, the ideal $I(\Lambda^+(n,d))$ is generated by the intersection 
\[I(\Lambda^+(n,d))\cap Dist(T)\] denoted by $I_T(\Lambda^+(n,d))$. Then $S_0(n,d)\simeq Dist(T)/I_T(\Lambda^+(n,d))$.
By Proposition \ref{pr4.4}, we have $I_T(\Lambda^+(n,d))=I_0(n,d)$, which implies $I(\Lambda^+(n,d))=I(n,d)$.

Conditions $\binom{H_i}{p^j}=0$ for $j\geq m$ imply $0\leq \lambda_i<q$ for each $i$, which gives  $0\leq \lambda_1+\ldots+\lambda_n<np^m\leq p^{m+t}$. That implies $\binom{H_1+\ldots + H_n-d}{p^j}=0$ for $j\geq m+t$.
\end{proof}

\begin{cor}\label{cor4.7}
Assume $d<p^m=q$. Then the Schur algebra $S(n,d)$ is isomorphic to a factoralgebra of $Dist(G_m)$. 
The unital associated algebra $S(n,d)$ is given by generators $\binom{H_i}{k}$, $x_{\alpha}^{(k)}$ for $\alpha\in \Phi$ 
and $0\leq k<q$. 
The generating relations are
$(a_{i,a,b})$, $(b_{\alpha,a,b})$, $(c_{i,j,p^u,p^v})$, $(d_{\alpha,\beta,p^u,p^u})$, $(e_{\alpha,p^u,p^u})$,
$(f_{\alpha, \beta,p^u,p^v})$, $(g_{\alpha,p^u,p^v})$ and $(h_{\alpha,p^u,p^v})$ for $1\leq i,j\leq n$, $0\leq a,b<q$, $\alpha,\beta\in \Phi$, $0\leq u,v<m$
and additional relations

${(i_j)}: \binom{H_1+\ldots + H_n-d}{p^j}=0$ for all $0\leq j<m+t$, where $t=[\log_p(n)]$.
\end{cor}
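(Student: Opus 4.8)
The plan is to read off the presentation directly by combining Proposition \ref{pr4.6} with the presentation of $Dist(G_m)$ established in Lemma \ref{lm6.1}. Proposition \ref{pr4.6} already supplies both halves of the first assertion: it realizes $S(n,d)$ as a factoralgebra of $Dist(G_m)$ and identifies the defining ideal as the two-sided ideal generated by the elements $\binom{H_1+\ldots+H_n-d}{p^j}$ for $0\le j<m+t$, with $t=[\log_p(n)]$. Thus there is nothing new to prove about the size of the ideal; the task is only to merge this quotient description with the known generators and relations of $Dist(G_m)$.

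First I would invoke Lemma \ref{lm6.1}, which presents $Dist(G_m)$ on the generators $\binom{H_i}{k}$ and $x_\alpha^{(k)}$ (for $\alpha\in\Phi$, $0\le k<q$) subject to the relations $(a_{i,a,b})$ through $(h_{\alpha,p^u,p^v})$ in their reduced forms. Since a presentation of a quotient algebra $A/J$ is obtained from a presentation of $A$ by adjoining to the relations any generating set of the ideal $J$, the presentation of $S(n,d)=Dist(G_m)/\langle (i_j):0\le j<m+t\rangle$ is precisely the generators and relations of Lemma \ref{lm6.1} together with the relations $(i_j)$. This is the formal content of the corollary, so the bulk of the argument is this single bookkeeping observation.

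The one point requiring care is that each relation $(i_j)$ must be exhibited as an honest relation among the chosen generators $\binom{H_i}{k}$, since $H_1+\ldots+H_n$ is not itself one of them. For this I would use Lemma \ref{lm3.8} to expand $\binom{H_1+\ldots+H_n-d}{p^j}$ as a $\mathbb{Z}$-linear combination of the binomials $\binom{H_1+\ldots+H_n}{b}$, and then apply the multiplication formula of Lemma \ref{lm3.2} (equivalently the relations $(a_{i,a,b})$) together with Lemmas \ref{lm3.3} and \ref{lm3.4} to rewrite each $\binom{H_1+\ldots+H_n}{b}$ as a polynomial in the generators $\binom{H_i}{k}$ with $0\le k<q$ modulo the defining relations. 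The main obstacle is therefore purely computational: confirming that this reduction is consistent and that it suffices to impose $(i_j)$ only for $0\le j<m+t$. The latter is exactly the point settled at the end of the proof of Proposition \ref{pr4.6}, where the bound $0\le\lambda_1+\ldots+\lambda_n<np^m\le p^{m+t}$ forces $\binom{H_1+\ldots+H_n-d}{p^j}=0$ for $j\ge m+t$ to be a consequence of the relations $\binom{H_i}{p^j}=0$, $j\ge m$, already built into $Dist(G_m)$.
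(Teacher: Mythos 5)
Your proposal is correct and takes essentially the same route as the paper, whose entire proof is the one-line citation of Lemma \ref{lm6.1}, Lemma \ref{lm4.3} and Proposition \ref{pr4.4} --- exactly the combination you assemble via Proposition \ref{pr4.6} (which is itself proved from those ingredients), including the observation that the truncation to $0\le j<m+t$ is settled at the end of that proposition's proof. One minor mis-citation: rewriting $\binom{H_1+\cdots+H_n-d}{p^j}$ as an expression in the chosen generators requires the Vandermonde convolution $\binom{H_1+\cdots+H_n}{b}=\sum_{b_1+\cdots+b_n=b}\prod_{i}\binom{H_i}{b_i}$ (an iterated form of Lemma \ref{lm3.7} with the shift treated as a second variable) followed by the reduction of Lemmas \ref{lm3.3} and \ref{lm3.4}, rather than the same-variable product formula of Lemma \ref{lm3.2}; this is a cosmetic slip, not a gap --- and you are in fact more careful here than the paper, which never addresses this interpretation point.
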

\begin{proof}
The proof follows from Lemmas \ref{lm6.1}, \ref{lm4.3} and Proposition \ref{pr4.4}.
\end{proof}

\section{Presentation of $S(n,r,s)$ in positive characteristic case}\label{5}
In this section, we assume the characteristic of the ground field is $p>0$ and $n>1$.

We denote $d=r+(n-1)s$ and choose $m$ so that $d<p^m=q$. Then $r+s<q$ as well.
Recall that $J_m$ is the ideal of $Dist(T)$ generated by elements $\binom{H_i}{p^j}$ for $i=1, \ldots, n$ and $j\geq m$ and 
$I_0(n,d)$ is  the ideal of $Dist(T)$ generated by $J_m$ and by $\binom{H_1+\ldots + H_n-d}{p^j}$ for all $j\geq 0$.

There is an automorphism $\sigma$ of $Dist(T)$ given by $\binom{H_i}{b_i} \mapsto \binom{H_i+s}{b_i}$. Then  
$\sigma(J_m)$ is generated by $\binom{H_i+s}{p^j}$ for $j\geq m$. 

Define $I_\sigma(n,d)$ to be the ideal of $Dist(T)$ generated by $\sigma(J_m)$ together with
$\binom{H_1+\ldots +H_n-r+s}{p^j}$ for all $j\geq 0$.

Further, define $I_0(n,r,s)$ to be the ideal generated by $I_\sigma(n,d)$ together with   
$\binom{H_S+s}{p^j}$ for all $j\geq m$, where $H_S=\sum_{i \in S} H_i$ for each proper subset $S$ of $\{1, \ldots, n\}$. 

The ideal $\sigma^{-1}(I_0(n,r,s))$ is generated by $I_0(n,d)$ and 
$\binom{H_S-|S|s+s}{p^j}$ for all $j\geq m$, where $H_S=\sum_{i\in S} H_i$ 
for each proper subset $S$ of $\{1, \ldots, n\}$.

Then we have the commutative diagram
\[\begin{CD} Dist(T)/J_m@>>>Dist(T)/I_0(n,d)@>>>Dist(T)/\sigma^{-1}(I_0(n,r,s))\\ 
@VV\sigma V @VV\sigma V @VV\sigma V\\ 
Dist(T)/\sigma(J_m) @>>>Dist(T)/I_\sigma(n,d)@>>>Dist(T)/I_0(n,r,s)\end{CD},\]
where vertical maps are isomorphisms induced by $\sigma$ and horizontal maps are surjections.

Finally, denote by $I(n,r,s)$ the ideal of $Dist(G)$ generated by $I_0(n,r,s)$.

\begin{lm}\label{lm5.1}
Assume $r+s<q=p^m$. Then the images of $H_i$ of $Dist(T)$ under the map $\rho_{r,s}$ satisfy 
\[\binom{\rho_{r,s}(H_i)+s}{p^k}\equiv 0 \pmod p \text{ for all } i=1, \ldots, n \text{ and } k\geq m,\]
\[\binom{\rho_{r,s}(H_1+\ldots + H_n)-r+s}{p^j}\equiv 0 \pmod p \text{ for all } j\geq 0, \text{ and}\] 
\[\begin{aligned}&\binom{\rho_{r,s}(H_S)+s}{p^j}\equiv 0 \pmod p \text{ for all }j\geq m, \text{ where } H_S=\sum_{i\in S} H_i\\
&\text{ for each proper subset } S \text{ of }\{1, \ldots, n\}.
\end{aligned}\]
\end{lm}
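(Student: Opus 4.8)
The plan is to prove Lemma \ref{lm5.1} by the weight-space argument already used for Lemma \ref{lm2.1} in the characteristic-zero case and for Lemma \ref{lm4.3} in the ordinary positive-characteristic case, now carried out on the mixed tensor space. The space $E_K^{r,s}$ is a direct sum of its $T$-weight spaces, with weights forming the set $\Lambda(n,r,s)$ of Lemma \ref{lm1.3}; hence each commuting operator $\rho_{r,s}(H_S)$ is diagonalizable and acts on the weight space of weight $\lambda$ as multiplication by the integer $\sum_{i\in S}\lambda_i$. I would first record the resulting reduction: for any integer shift $c$, the operator $\binom{\rho_{r,s}(H_S)+c}{p^j}$ is diagonal in the weight-space decomposition, acting on the $\lambda$-space as the reduction modulo $p$ of $\binom{(\sum_{i\in S}\lambda_i)+c}{p^j}$, so it vanishes identically if and only if that scalar vanishes mod $p$ for every $\lambda\in\Lambda(n,r,s)$.

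With this reduction each assertion becomes a numerical statement about the attainable eigenvalues, and the hypothesis $r+s<q=p^m$ is exactly what confines the shifted eigenvalues to the range governed by Lemma \ref{lm3.5} and Corollary \ref{cor1}. For the first relation I would take $S=\{i\}$: condition $(b)$ of Lemma \ref{lm1.3} gives $-s\leq\lambda_i\leq r$, hence $0\leq\lambda_i+s\leq r+s<p^m$, and Lemma \ref{lm3.5} with $l=m$ yields $\binom{\lambda_i+s}{p^k}\equiv 0\pmod p$ for all $k\geq m$. The third relation is the same estimate for an arbitrary proper subset $S$, giving $0\leq(\sum_{i\in S}\lambda_i)+s\leq r+s<p^m$ and hence $\binom{(\sum_{i\in S}\lambda_i)+s}{p^j}\equiv 0\pmod p$ for $j\geq m$ by Lemma \ref{lm3.5}. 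For the middle relation condition $(a)$ of Lemma \ref{lm1.3} forces $\sum_{i=1}^n\lambda_i=r-s$ for every weight, so the shifted eigenvalue is exactly $(\sum_i\lambda_i)-r+s=0$, and Corollary \ref{cor1} gives $\binom{0}{p^j}\equiv 0\pmod p$ for all $j\geq 0$.

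I expect no serious obstacle: once the weight-space reduction is in place, everything reduces to Lemma \ref{lm1.3} combined with the divisibility criteria of Section \ref{3}. The only points requiring care are verifying that the singleton and full-set specializations of Lemma \ref{lm1.3} produce the exact ranges $[-s,r]$ and the single value $r-s$, and tracking the bound $r+s<q$, which is what guarantees every shifted eigenvalue lands in $\{0,\dots,p^m-1\}$ so that the threshold $j\geq m$ in Lemma \ref{lm3.5} is the correct one. I would present the weight-space reduction once and then dispatch the relations in the order first, third, middle, since the first two share an identical estimate and the middle is the degenerate case handled by Corollary \ref{cor1}.
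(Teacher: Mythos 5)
Your proposal is correct and follows essentially the same route as the paper: both reduce to the characterization of $\Lambda(n,r,s)$ in Lemma \ref{lm1.3}, then apply Corollary \ref{cor1} to the eigenvalue $\sum_i\lambda_i-r+s=0$ for the middle relation and Lemma \ref{lm3.5} to the bounds $0\leq\sum_{i\in S}\lambda_i+s\leq r+s<q$ for the other two. The only difference is that you spell out the weight-space diagonalization (with binomials evaluated on integer eigenvalues and then reduced mod $p$) that the paper leaves implicit, which is a sound and harmless addition.
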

\begin{proof}
The set $\Lambda(n,r,s)$ of weights $\lambda$ of $E_K^{r,s}$ is characterized by the conditions of Lemma \ref{lm1.3}
stating that $\sum_{i=1}^n \lambda_i=r-s$  and by $-s\leq \sum_{i \in S} \lambda_i\leq r $ for every proper subset $S$ of $\{1, \ldots, n\}$. By Corollary \ref{cor1}, we derive that 
\[\binom{\rho_{r,s}(H_1+\ldots + H_n)-r+s}{p^j}\equiv 0 \pmod p \text{ for all }j\geq 0.\]

Taking one-element sets $S=\{i\}$, we see that $0\leq \lambda_i+s\leq r+s<q$, hence by Lemma \ref{lm3.5}, we derive
\[\binom{\rho_{r,s}(H_i)+s}{p^k}\equiv 0 \pmod p \text{ for all }i=1, \ldots, n \text{ and } k\geq m.\]

The inequalities  $-s\leq \sum_{i \in S} \lambda_i \leq r$ imply $0\leq \sum_{i \in S} \lambda_i +s<q$.
By Lemma \ref{lm3.5}, we get 
\[\binom{\rho_{r,s}(H_S)+s}{p^j}\equiv 0 \pmod p \text{ for all }j\geq m.\]
\end{proof}

Denote $F_0(n,r,s)=Dist(T)/I_0(n,r,s)$.

\begin{pr}\label{pr5.2}
Assume $d<q=p^m$. Then $S_0(n,r,s)\simeq F_0(n,r,s)$.
\end{pr}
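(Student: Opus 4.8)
The plan is to mirror the structure of the characteristic-zero argument in Proposition \ref{pr2.2}, adapted to the positive-characteristic setting via the Frobenius-kernel machinery already established. The key point is that $F_0(n,r,s)$ is a semisimple commutative algebra whose dimension is squeezed between $|\Lambda(n,r,s)|$ from above and below, forcing it to coincide with $S_0(n,r,s)$. First I would use the automorphism $\sigma$ of $Dist(T)$ and the commutative diagram already set up: since $\sigma$ induces an isomorphism $Dist(T)/\sigma^{-1}(I_0(n,r,s)) \simeq Dist(T)/I_0(n,r,s) = F_0(n,r,s)$, it suffices to understand the semisimplicity and dimension of the source, and transport the conclusion. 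The generators $\binom{H_i+s}{p^j}$ for $j\geq m$ mean that modulo $I_0(n,r,s)$ the relevant eigenvalues $\lambda_i$ satisfy $0\leq \lambda_i+s<q$, so by Lemma \ref{lm4.2} the quotient $Dist(T)/\sigma(J_m)$ is again a sum of one-dimensional idempotent summands $K h_{b_1,\ldots,b_n}$ (now shifted by $s$), hence semisimple; consequently $F_0(n,r,s)$ is semisimple as a further quotient.

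Next I would establish the upper bound $\dim F_0(n,r,s)\leq |\Lambda(n,r,s)|$. The argument is exactly parallel to the char-zero case: if $\lambda$ is a weight with $-s\leq \lambda_i \leq r$ for all $i$ but $\lambda\notin\Lambda(n,r,s)$, then by Lemma \ref{lm1.3} either $\sum_i \lambda_i \neq r-s$, in which case the generator $\binom{H_1+\ldots+H_n-r+s}{p^j}$ together with Corollary \ref{cor1} kills the corresponding idempotent, or some proper subset $S$ violates $-s\leq \sum_{i\in S}\lambda_i\leq r$, in which case the generator $\binom{H_S+s}{p^j}$ together with Lemma \ref{lm3.5} places the idempotent in $I_0(n,r,s)$. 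Thus the surviving idempotents are indexed by $\Lambda(n,r,s)$, giving the bound.

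For the lower bound and the identification with $S_0(n,r,s)$, I would introduce $I=I(n,r,s)$, $R=Dist(G)/I$, and $R_0$ the image of $Dist(T)$ in $R$, exactly as in Proposition \ref{pr2.2}. Lemma \ref{lm5.1} shows that the images $\rho_{r,s}(H_i)$ satisfy all the defining relations of $I_0(n,r,s)$, yielding a chain of surjections
\[
F_0(n,r,s) = Dist(T)/I_0(n,r,s) \twoheadrightarrow Dist(T)/(I\cap Dist(T)) \twoheadrightarrow R_0 \twoheadrightarrow S_0(n,r,s),
\]
where the first map comes from the inclusion $I_0(n,r,s)\subseteq I\cap Dist(T)$. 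On the other hand, $S_0(n,r,s)=(A(n,r,s)\cap T)^*$ has a basis of weight monomials indexed exactly by $\Lambda(n,r,s)$, so $\dim S_0(n,r,s)=|\Lambda(n,r,s)|\leq \dim F_0(n,r,s)$. Combining this with the upper bound collapses every surjection in the chain into an isomorphism, proving $S_0(n,r,s)\simeq F_0(n,r,s)$.

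The main obstacle I anticipate is the bookkeeping around the shift by $s$: one must verify carefully that the generator $\binom{H_S+s}{p^j}$ for $j\geq m$ correctly encodes the two-sided bound $-s\leq \sum_{i\in S}\lambda_i\leq r$ rather than just one inequality. This requires the hypothesis $r+s<q$ (so that the interval $[0,q)$ is large enough to separate the admissible values of $\sum_{i\in S}\lambda_i+s$), together with the fact that the complementary subset $S'$ is handled automatically once the total-weight relation $\sum_i\lambda_i = r-s$ is imposed—precisely the role played by the generator $\binom{H_1+\ldots+H_n-r+s}{p^j}$. Once this equivalence between the binomial vanishing conditions and the characterization in Lemma \ref{lm1.3} is confirmed via Lemma \ref{lm3.5} and Corollary \ref{cor1}, the dimension count proceeds mechanically.
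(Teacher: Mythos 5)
Your proposal is correct and takes essentially the same route as the paper's proof: semisimplicity of $Dist(T)/\sigma(J_m)$ via the shifted idempotents $h_{b_1,\ldots,b_n}(H_1+s,\ldots,H_n+s)$, the bound $\dim F_0(n,r,s)\leq |\Lambda(n,r,s)|$ by showing the idempotents of weights outside $\Lambda(n,r,s)$ lie in $I_0(n,r,s)$, and the chain of surjections from Lemma \ref{lm5.1} collapsed by the count $\dim S_0(n,r,s)=|\Lambda(n,r,s)|$. Your closing observation--that a violation $\sum_{i\in S}\lambda_i>r$ is caught through the complementary subset once the total-sum relation is imposed--is precisely how the paper avoids the issue, by using the one-sided characterization $(a)+(b'')$ of Lemma \ref{lm1.3}; your use of $\sigma$ to transport semisimplicity is the variant the paper itself records in Remark \ref{rem5.3}.
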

\begin{proof}
By Proposition \ref{pr4.1} and Lemma \ref{lm4.2}, we have 
\[ Dist(T_m)\simeq Dist(T)/J_m=\oplus_{\stackrel{0\leq b_i<q}{i=1, \ldots n}} Kh_{b_1, \ldots, b_n}.\]
Therefore, 
\[Dist(T)/\sigma(J_m)=\oplus_{\stackrel{0\leq b_i<q}{i=1, \ldots n}} Kh_{b_1, \ldots, b_n}(H_1+s, \ldots, H_n+s)\]
is a semisimple algebra.

By Lemma \ref{lm1.3}, $\mu\in \Lambda(n,r,s)$ if and only if $\mu_1+\ldots +\mu_n=r-s$ and for each proper subset $S$ of $\{1,\ldots, n\}$ there is $-s\leq \mu_S$ where $\mu_S=\sum_{i\in S} \mu_i$.
If we denote $\lambda=\mu+(s,\ldots, s)$, these conditions are equivalent to 
$(\lambda_1-s)+\ldots + (\lambda_n-s)=r-s$, and for each proper subset $S$ of $\{1, \ldots, n\}$ there is 
$0\leq \lambda_S-|S|s+s<q$ for $\lambda_S=\sum_{i\in S}\lambda_i$.

If $\mu \notin \Lambda(n,r,s)$ then either $\mu_1+\ldots +\mu_n\neq r-s$
or there is a proper subset $S$ of $\{1, \ldots, n\}$ such that $\mu_S<-s$. 
In this case, 
\[h_{\lambda_1,\ldots, \lambda_n}=h_{\mu_1+s,\ldots, \mu_n+s}(H_1+s,\ldots, H_n+s)\in I_0(n,r,s).\]
Therefore, $F_0$ is a subalgebra of 
\[\oplus_{\mu\in \Lambda(n,r,s)} Kh_{\mu_1+s,\ldots, \mu_n+s}(H_1+s, \ldots, H_n+s).\]
In particular, 
$dim(F_0)\leq |\Lambda(n,r,s)|$.

Denote $R=Dist(G)/I(n,r,s)$. The canonical quotient map $Dist(T)\to R$ induces by restriction to $Dist(T)$  a map
$Dist(T)\to R$. The image of this map is denoted by $R_0$, and its kernel is $I(n,r,s)\cap Dist(T)$. Therefore, 
\[R_0 \simeq Dist(T)/(I(n,r,s)\cap Dist(T)).\]

By Lemma \ref{lm5.1}, there is a surjective map $R_0\to S_0(n,r,s)$ which is a part of the sequence of algebra surjections
\[F_0=Dist(T)/I_0(n,r,s) \to Dist(T)/(I(n,r,s)\cap Dist(T))\to R_0\to S_0(n,r,s).\]
The first surjection is given by inclusion $I_0(n,r,s)\subset I(n,r,s)\cap Dist(T)$.

We have $S_0(n,r,s)=(A(n,r,s)\cap T)^*$ and $dim(S_0(n,r,s))=dim(A(n,r,s)\cap T)$.
The space $A(n,r,s)\cap T$ has a basis consisting of elements 
$\prod_{i\in P^+_\lambda} c_{ii}^{\lambda_i}\prod_{i\in P^-_\lambda} d_{ii}^{-\lambda_i}\in E_K^{r,s}$ such that 
$-s\leq \lambda_S\leq r$ for each proper subset $S$ of $\{1, \ldots, n\}$ and $\lambda_1+ \ldots + \lambda_n = r-s$.
Therefore
$dim(S_0(n,r,s))=|\Lambda(n,r,s)|\leq dim(F_0)$. Since $dim(F_0)\leq |\Lambda(n,r,s)|$, we conclude that the above surjections are algebra isomorphisms.
\end{proof}

\begin{rem}\label{rem5.3}
Instead of the above argument, we can work with the sequence 
\[Dist(T)/J_m \to Dist(T)/I_0(n,d) \to Dist(T)/\sigma^{-1}(I_0(n,r,s))\]
of surjections and then apply $\sigma$ to derive that $S_0(n,r,s)\simeq Dist(T)/I_0(n,r,s)$.
This approach is reminiscent of but not the same as in \cite{dd} since the automorphism $\sigma$ of $Dist(T)$ is not given by tensoring with a power of the determinant representation.
\end{rem}

\begin{pr}\label{pr5.4}
Assume $d<q=p^m$. Then the rational Schur algebra $S(n,r,s)$ is a factoralgebra of $Dist(G)$ modulo the ideal $I(n,r,s)$.
\end{pr}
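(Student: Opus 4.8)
The plan is to mirror the strategy used for the characteristic zero analogue in Proposition \ref{pr2.3} and for the ordinary Schur algebra in Proposition \ref{pr4.6}: I will identify the defining ideal $I(\Lambda^+(n,r,s))$ of the generalized Schur algebra with the explicitly described ideal $I(n,r,s)$ generated by $I_0(n,r,s)$. First I would invoke the identification $S(n,r,s)=S(\Lambda^+(n,r,s))$ from 1.3 (4) of \cite{d} together with Proposition \ref{pr1.1}, which realizes $S(n,r,s)$ as $Dist(G)/I(\Lambda^+(n,r,s))$ via the restriction map $\rho_{r,s}$. By Donkin's Proposition \ref{pr1.2}, the ideal $I(\Lambda^+(n,r,s))$ is generated by its intersection $I_T(\Lambda^+(n,r,s))=I(\Lambda^+(n,r,s))\cap Dist(T)$ with the distribution algebra of the torus, so it suffices to prove the single equality $I_0(n,r,s)=I_T(\Lambda^+(n,r,s))$ at the level of $Dist(T)$.

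One inclusion is immediate from Lemma \ref{lm5.1}: the generators of $I_0(n,r,s)$, namely the elements $\binom{H_i+s}{p^k}$ for $k\geq m$, the elements $\binom{H_1+\ldots+H_n-r+s}{p^j}$ for $j\geq 0$, and the elements $\binom{H_S+s}{p^j}$ for $j\geq m$ ranging over proper subsets $S$, are all annihilated by $\rho_{r,s}$. Since $I_T(\Lambda^+(n,r,s))$ is precisely the kernel of $\rho_{r,s}$ restricted to $Dist(T)$, this gives $I_0(n,r,s)\subseteq I_T(\Lambda^+(n,r,s))$, hence a surjection $F_0(n,r,s)=Dist(T)/I_0(n,r,s)\twoheadrightarrow Dist(T)/I_T(\Lambda^+(n,r,s))\simeq S_0(n,r,s)$.

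The reverse inclusion, and with it the proposition, I would obtain by a dimension count. By Proposition \ref{pr5.2}, the source $F_0(n,r,s)$ is already isomorphic to $S_0(n,r,s)$, so the two finite-dimensional algebras at the ends of the surjection above have equal dimension. A surjection of finite-dimensional algebras of equal dimension is an isomorphism, which forces $I_0(n,r,s)=I_T(\Lambda^+(n,r,s))$. Consequently the two-sided ideal of $Dist(G)$ generated by $I_T(\Lambda^+(n,r,s))$, which is $I(\Lambda^+(n,r,s))$ by Proposition \ref{pr1.2}, coincides with the two-sided ideal generated by $I_0(n,r,s)$, namely $I(n,r,s)$. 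Therefore $S(n,r,s)=Dist(G)/I(\Lambda^+(n,r,s))=Dist(G)/I(n,r,s)$.

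The one genuine point requiring care, and the main place I expect the argument could slip, is the passage from the equality of the torus-level ideals to the equality of the two-sided ideals inside $Dist(G)$. This step relies entirely on Proposition \ref{pr1.2}: without the guarantee that a generalized Schur algebra's defining ideal is generated by its toral part, one could not conclude that the global ideals agree merely from agreement of their intersections with $Dist(T)$. The remaining ingredients, the inclusion supplied by Lemma \ref{lm5.1} and the dimension equality supplied by Proposition \ref{pr5.2}, are already established, so once these are assembled the argument is short.
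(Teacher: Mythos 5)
Your proposal is correct and takes essentially the same route as the paper's own proof: both reduce to the torus via Proposition \ref{pr1.2}, obtain the inclusion $I_0(n,r,s)\subseteq I_T(\Lambda^+(n,r,s))$ from the surjection supplied by Lemma \ref{lm5.1}, and then force equality of the two ideals (hence of the two-sided ideals they generate) from the isomorphism $F_0(n,r,s)\simeq S_0(n,r,s)$ of Proposition \ref{pr5.2}. Your explicit remark that a surjection of finite-dimensional algebras of equal dimension is an isomorphism is exactly the implicit step in the paper's ``Therefore, $I_0(n,r,s)=I_T(\Lambda^+(n,r,s))$.''
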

\begin{proof}
By 1.3. (4) of \cite{d}, $S(n,r,s)= S(\Lambda^+(n,r,s))=Dist(G)/(I(\Lambda^+(n,r,s))$.
By Proposition 1.2, the ideal $I(\Lambda^+(n,r,s))$ is generated by the intersection 
\[I(\Lambda^+(n,r,s))\cap Dist(T)\]
denoted by $I_T(\Lambda^+(n,r,s))$.
By Proposition \ref{pr5.2},
\[S_0(n,r,s)\simeq Dist(T)/I_T(\Lambda^+(n,r,s))\simeq Dist(T)/I_0(n,r,s)\simeq F_0(n,r,s)\]
is semisimple.

The surjection from $F_0(n,r,s)$ to $S_0(n,r,s)$ constructed earlier implies $I_0(n,r,s)\subseteq I_T(\Lambda^+(n,r,s))$.
Therefore, $I_0(n,r,s)=I_T(\Lambda^+(n,r,s))$ and
$I(n,r,s)=I(\Lambda^+(n,r,s))$.
\end{proof}

Using the Remark \ref{rem5.3} approach, we can replace $H_i$ with $H_i'=H_i+s$.

The factoralgebra $Dist(T)/\sigma(J_m)\simeq Dist(T_m)$ is given by generators $\binom{H'_i}{a}$ for $0\leq a<q=p^m$ and 
relations 
\[\binom{H'_i}{a}\binom{H'_i}{b}=\sum_{j=0}^{\min\{a,b\}} \binom{a+b-j}{a-j}\binom{b}{b-j}\binom{H'_i}{a+b-j}\]  
for $i=1,\ldots, n$ and $0\leq a,b<q$

We obtain the following presentation of $S(n,r,s)$.

Denote
\[\begin{aligned}
(a'_{i,a,b}): &\binom{H'_i}{a}\binom{H'_i}{b}=\sum_{j=0}^{\min\{a,b\}} \binom{a+b-j}{a-j}\binom{b}{b-j}\binom{H'_i}{a+b-j}\\
& \text{ for } i=1,\ldots, n \text{ and } 0\leq a,b<q\\
(b_{\alpha,k,l}): &x_{\alpha}^{(k)}x_{\alpha}^{(l)}=\binom{k+l}{k} x_{\alpha}^{(k+l)}, x_{\alpha}^{(0)}=1 \text{ for }\alpha\in \Phi \text{ and } 0\leq k,l<q\\
(c'_{i,j,a,b}): &\binom{H'_i}{a}\binom{H'_j}{b}=\binom{H'_j}{b}\binom{H'_i}{a} \text{  for } 1\leq i,j \leq n \text{ and } 0\leq a,b<q\\
(d_{\alpha,\beta,k,l}): &x_{\alpha}^{(k)}x_{\beta}^{(l)}=\sum_{a=0}^{\min\{k,l\}} c_{\alpha,\beta}^{a} x_{\beta}^{(l-a)}x_{\alpha+\beta}^{(a)}x_{\alpha}^{(k-a)}\\
&\text{ for }0\leq k,l<q \text{ and } \alpha,\beta\in \Phi \text{ such that }\alpha+\beta\neq 0 \text{ and }\alpha+\beta\in \Phi\\
(e'_{\alpha,k,l}): &x_{\alpha}^{(k)}x_{-\alpha}^{(l)}=\sum_{a=0}^{\min\{k,l\}} x_{-\alpha}^{(l-a)}\binom{H'_{\alpha}-k-l+2a}{a}x_{\alpha}^{(k-a)}\\
& \text{ for }0\leq k,l<q \text{ and }\alpha\in \Phi, \text{ where } H'_{\alpha}=H'_i-H'_j  \text{ for }\alpha=\epsilon_i-\epsilon_j\\
(f_{\alpha,\beta,k,l}): &x_{\alpha}^{(k)}x_{\beta}^{(l)}=x_{\beta}^{(l)}x_{\alpha}^{(k)} \text{ for } 0\leq k,l<q 
\text{ and }\alpha,\beta\in \Phi  \text{ such that } \alpha+\beta\notin\Phi.
\end{aligned}\] 

If $\alpha\in \Phi$, $\alpha=\epsilon_i-\epsilon_j$ and $0\leq a,k<q$, then
\[\begin{aligned}
(g'_{\alpha,k,a}): &\binom{H'_i}{a}x_{\alpha}^{(k)}=x_{\alpha}^{(k)}\binom{H'_i-k}{a},
\binom{H'_j}{a}x_{\alpha}^{(k)}=x_{\alpha}^{(k)}\binom{H'_j+k}{a}\\
& \text{ and }\binom{H'_l}{a}x_{\alpha}^{(k)}=x_{\alpha}^{(k)}\binom{H'_l}{a} \text{ for }l\neq i,j\\
(h'_{\alpha,k,a}): &x_{\alpha}^{(k)}\binom{H'_i}{a}=\binom{H'_i+k}{a}x_{\alpha}^{(k)},
x_{\alpha}^{(k)}\binom{H'_j}{a}=\binom{H'_j-k}{a}x_{\alpha}^{(k)}.
\end{aligned}\]

(In the above relations, we apply Lemmas \ref{lm3.7} and \ref{lm3.8} to express $\binom{H'_i+k}{a}$ and $\binom{H'_j-k}{a}$ 
as a $\mathbb{Z}$-linear combinations of $\binom{H'_i}{b}$ and $\binom{H'_j}{b}$.)

\begin{cor}\label{cor5.6}
Assume $d<q=p^m$. Then the rational Schur algebra $S(n,r,s)$ is isomorphic to a factoralgebra of $Dist(G)/(\sigma(J_m))$
generated by $\binom{H'_i}{a}$, $x_{\alpha}^{(k)}$ for $\alpha\in \Phi$ and $0\leq a,k<q$ modulo the ideal  generated by
\[\begin{aligned}
&(a'_{l,a,b}),  (b_{\alpha,a,b}), (c'_{l,p^i,p^j}), (d_{\alpha,\beta,p^i,p^j}), (e'_{\alpha,p^i,p^j}), (f_{\alpha, \beta,p^i,p^j}), (g'_{\alpha,p^i,p^j}) \text{ and } (h'_{\alpha,p^i,p^j})\\
&\text{for }1\leq l\leq n, 0\leq a,b<q, 0\leq i,j<m \text{ and }\alpha,\beta\in \Phi \text{ and additional relations}\\
&(i'_j): \binom{H'_1+\ldots + H'_n-d}{p^j}=0 \text{ for all } 0\leq j\leq m+t, \\
& \text{where } t=[\log_p(n)], \text{ and by relations} \\ 
&(j_j): \binom{H'_S-|S|s+s}{p^j}=0  \text{ for every proper subset } S \text{ of }\{1, \ldots, n\}, \\
&\text{where } H'_S=\sum_{i\in S} H'_i, 0\leq j\leq  m+t \text{ and } t=[\log_p(n)].
\end{aligned}\]
\end{cor}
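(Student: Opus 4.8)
The plan is to combine the identification $S(n,r,s)=Dist(G)/I(n,r,s)$ from Proposition \ref{pr5.4} with the presentation of the Frobenius-kernel distribution algebra in Lemma \ref{lm6.1}, transported by $\sigma$ and read in the shifted generators $H_i'=H_i+s$. First I would promote $\sigma$ to an algebra automorphism of all of $Dist(G)$ via $\binom{H_i}{a}\mapsto\binom{H_i+s}{a}$ and $x_\alpha^{(k)}\mapsto x_\alpha^{(k)}$. That this respects every relation of Lemma \ref{lm6.1} is a direct check: by Lemma \ref{lm3.7} the commutation $\binom{H_i}{b}x_\alpha^{(k)}=x_\alpha^{(k)}\binom{H_i-k}{b}$ is carried to $(g')$ and $(h)\mapsto(h')$, the relation $(e)$ is unchanged because $H_\alpha=H_i-H_j$ is invariant under the central shift, and the toral relations pass to their primed forms. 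Hence $\sigma$ induces an isomorphism $Dist(G_m)=Dist(G)/\langle J_m\rangle\xrightarrow{\sim}Dist(G)/\langle\sigma(J_m)\rangle$, so that $Dist(G)/(\sigma(J_m))$ is presented by $\binom{H_i'}{a},x_\alpha^{(k)}$ for $0\le a,k<q$ subject to $(a')$ through $(h')$, exactly as in Lemma \ref{lm6.1}.

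Next I would rewrite the remaining generators of $I_0(n,r,s)$ in the primed variables. A one-line computation gives $H_1+\dots+H_n-r+s=(H_1'+\dots+H_n')-d$ and $H_S+s=H_S'-|S|s+s$, so the generators $\binom{H_1+\dots+H_n-r+s}{p^j}$ (all $j\ge0$) and $\binom{H_S+s}{p^j}$ (all $j\ge m$) of $I_0(n,r,s)$ become precisely the families $(i'_j)$ and $(j_j)$, the latter ranging over $j\ge m$. By Proposition \ref{pr5.4} the ideal $I(n,r,s)$ is generated by $I_0(n,r,s)$, and its image in $Dist(G)/(\sigma(J_m))$ is generated by these two families, with quotient $S(n,r,s)$. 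It remains only to cut each infinite family down to a finite one.

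The substantive step, and the one I expect to be the main obstacle, is this finite truncation, mirroring Proposition \ref{pr4.6}. The relations $\binom{H_i'}{p^j}=0$ for $j\ge m$ coming from $\sigma(J_m)$ force each $H_i'$ to take values in $\{0,\dots,q-1\}$ by Lemma \ref{lm3.5}; summing over $i\in S$ and using $n\le p^t$ bounds the eigenvalues of $H_S'$ in $[0,|S|q)\subseteq[0,p^{m+t})$, and those of $H_S'-|S|s+s$ in $(-p^m,p^{m+t})$ since $0\le(|S|-1)s\le(n-1)s\le d<p^m$. These bounds make $\binom{\,\cdot\,}{p^j}$ vanish automatically for $j>m+t$, leaving only $j\le m+t$. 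For $(i'_j)$, Lemma \ref{lm3.1} identifies $\binom{a-d}{p^j}\bmod p$ with the $j$-th $p$-adic digit of $a-d$, where $a$ is the value of $H_1'+\dots+H_n'$; imposing these for $0\le j\le m+t$ forces $a=d$, that is $\sum H_i=r-s$, which is condition $(a)$ of Lemma \ref{lm1.3}. For $(j_j)$ the relation at $j=m$ already excludes a negative value of $H_S+s$, since for a negative argument bounded below by $-p^m$ the binomial $\binom{\,\cdot\,}{p^m}$ is a unit modulo $p$ by Lemma \ref{lm3.5}; the relations for $m\le j\le m+t$ then pin $H_S+s$ into $[0,q)$, i.e.\ $-s\le\sum_{i\in S}H_i$, which is condition $(b'')$. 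By Lemma \ref{lm1.3}, conditions $(a)$ and $(b'')$ characterise $\Lambda(n,r,s)$, so these finite relations cut out exactly $I_0(n,r,s)\cap Dist(T)$ computed in Proposition \ref{pr5.2}. Assembling this with the presentation of $Dist(G)/(\sigma(J_m))$ from the first paragraph yields the asserted presentation of $S(n,r,s)$.
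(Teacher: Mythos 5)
Your argument is correct and is essentially the paper's: transport of Lemma \ref{lm6.1} along $\sigma$, rewriting of the generators of $I_0(n,r,s)$ in the shifted variables $H_i'$, and truncation of the infinite families using the bounds coming from $\sigma(J_m)$ together with Lemma \ref{lm3.5}. The paper's printed proof consists only of this last truncation step (the first two steps are its set-up in Section 6 and Proposition \ref{pr5.4}), so your write-up is if anything more complete.

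One discrepancy must be flagged rather than passed over. What you prove --- and what the paper's proof also proves --- is the presentation in which the relations $(j_j)$ range over $m\leq j\leq m+t$: that is the range furnished by $I_0(n,r,s)$, whose generators $\binom{H_S+s}{p^j}$ are imposed only for $j\geq m$, and it is the range you actually use in your third paragraph. The corollary as printed asserts the range $0\leq j\leq m+t$, and the two are not equivalent: the relations $(j_j)$ with $j<m$ fail in $S(n,r,s)$. For example, take $n=2$, $p=2$, $r=s=1$, $m=2$, $S=\{1\}$; then $(j_0)$ reads $H_1'=H_1+1=0$, yet $H_1+1$ acts by $1$ on the weight-$(0,0)$ space of $E_K^{1,1}$, which is nonzero modulo $2$. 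So imposing $(j_j)$ for $0\leq j<m$ produces a proper quotient of $S(n,r,s)$, and the printed lower bound $0\leq j$ in $(j_j)$ is a misprint for $m\leq j$ (for $(i'_j)$ the lower bound $0\leq j$ is correct, since those generators lie in $I_0(n,r,s)$ for all $j\geq 0$). Your proof silently establishes the corrected statement; you should say this explicitly instead of letting the mismatch stand. Two minor points you inherit from the paper: the inequality $np^m\leq p^{m+t}$ requires $t=\lceil\log_p n\rceil$ rather than $t=[\log_p n]$ when $n$ is not a power of $p$; and for $(i'_j)$ the digit argument via Lemma \ref{lm3.1} covers only the case $a\geq d$, while $a<d$ needs the same negative-argument unit observation you invoke for $(j_j)$, applied to the relation at $j=m$.
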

\begin{proof}
Conditions $\binom{H'_i}{p^j}=0$ for $j\geq m$ imply $0\leq \lambda'_i= \lambda_i+s<p^m$ for each $i$, which gives  $-p^m<-d\leq \lambda'_1+\ldots+\lambda'_n-d<np^m-d<np^m\leq p^{m+t}$. Then $\binom{H'_1+\ldots + H'_n-d}{p^{m+t}}=0$ implies $0\leq \lambda'_1+\ldots+\lambda'_n-d$
and $\binom{H'_1+\ldots + H'_n-d}{p^j}=0$ for $j> m+t$.

Also, $-np^m<-ns<H'_S-|S|s+s<np^m$. Then $\binom{H'_S-|S|s+s}{p^{m+t}}=0$ implies $0\leq H'_S-|S|s+s$
and $\binom{H'_S-|S|s+s}{p^j}=0$ for $j> m+t$.
\end{proof}

\section*{Acknowledgement}
The author would like to thank Stephen Doty for valuable suggestions and references.

\end{document}